\theoremstyle{remark}
\theoremstyle{plain}
\newtheorem{lemma}{Lemma}[section]
\newtheorem{theorem}{Theorem}[section]
\newtheorem{corollary}{Corollary}[section]
\newtheorem{condition}{Condition}[subsection]
\newcommand{\p}{\mathbb{P}}
\newcommand{\E}{\mathbb{E}}
\newcommand{\iid}{\stackrel{iid}{\sim}}
\newcommand{\br}[1]{\left( #1 \right)}
\newcommand{\cbr}[1]{\left\{ #1 \right\}}
\newcommand{\pbr}[1]{\p\left( #1 \right)}
\newcommand{\ebr}[1]{\exp\left( #1 \right)}
\newcommand{\abs}[1]{\left| #1 \right|}
\newcommand{\sign}[1]{\text{ sign}\left( #1 \right)}
\newcommand{\mathr}{\mathbb{R}}
\newcommand{\mathn}{\mathcal{N}}
\newcommand{\indic}[1]{{\mathbb{I}\left\{{#1}\right\}}}
\newcommand{\iprod}[2]{\left \langle #1, #2 \right\rangle}
\newcommand{\norm}[1]{\left\|{#1} \right\|}
\newcommand{\argmin}{\mathop{\rm argmin}}
\newcommand{\bbr}[1]{\left\{{#1} \right\}}
\newcommand{\normp}[1]{\|{#1} \|}
\newcommand{\snr}{\textsf{SNR}}
\newcommand{\Sigmahatazstar}{\hat{\Sigma}_a(z^*)}
\newcommand{\Sigmahatbzstar}{\hat{\Sigma}_b(z^*)}
\newcommand{\Sigmahatzstar}{\hat{\Sigma}(z^*)}
\newcommand{\Sigmahataz}{\hat{\Sigma}_a(z)}
\newcommand{\Sigmahatbz}{\hat{\Sigma}_b(z)}
\newcommand{\Sigmahatz}{\hat{\Sigma}(z)}
\newcommand{\Sigmaastar}{\Sigma_a^*}
\newcommand{\Sigmabstar}{\Sigma_b^*}
\newcommand{\Sigmastar}{\Sigma^*}
\newcommand{\Sigmazjstarstar}{\Sigma_{z_j^*}^*}
\newcommand{\Sigmahatzjstarzstar}{\hat{\Sigma}_{z_j^*}(z^*)}
\newcommand{\sumzjstareqa}{\sum_{j=1}^{n}\mathbb{I}\{z_j^*=a\}}
\newcommand{\sumzjeqa}{\sum_{j=1}^{n}\mathbb{I}\{z_j=a\}}
\newcommand{\thetahatazstar}{\hat{\theta}_a(z^*)}
\newcommand{\thetahatbzstar}{\hat{\theta}_b(z^*)}
\newcommand{\thetahataz}{\hat{\theta}_a(z)}
\newcommand{\thetahatbz}{\hat{\theta}_b(z)}
\newcommand{\thetaastar}{\theta_a^*}
\newcommand{\thetabstar}{\theta_b^*}
\newcommand{\thetahatzjstarzstar}{\hat{\theta}_{z_j^*}(z^*)}
\newcommand{\romaone}{\uppercase\expandafter{\romannumeral1}}
\newcommand{\romatwo}{\uppercase\expandafter{\romannumeral2}}
\newcommand{\indzja}{\mathbb{I}\{z_j=a\}}
\newcommand{\indzjstara}{\mathbb{I}\{z_j^*=a\}}
\newcommand{\zjstar}{z_j^*}
\newcommand{\thetazjstarstar}{\theta_{\zjstar}^*}
\newcommand{\lambdamin}{\lambda_{\min}}
\newcommand{\lambdamax}{\lambda_{\max}}
\newcommand{\dab}{\Delta_{a,b}}
\newcommand{\xiab}{\Xi_{a,b}}
\renewcommand{\preceq}{\lesssim}
\title{Optimal Clustering in  Anisotropic Gaussian Mixture Models}
\author[1]{Xin Chen}
\author[2]{Anderson Y. Zhang}
\affil[1]{
University of Washington
}
\affil[2]{
University of Pennsylvania
}
\date{}
\begin{document}
\maketitle

\begin{abstract}
We study the clustering task under anisotropic Gaussian Mixture Models where the covariance matrices from different clusters are unknown and are not necessarily the identical matrix. We characterize the dependence of signal-to-noise ratios on the cluster centers and covariance matrices and obtain the minimax lower bound for the  clustering problem. In addition, we propose a computationally feasible procedure and prove it achieves the optimal rate within a few iterations. The proposed procedure is a hard EM type algorithm, and it can also be seen as a variant of the Lloyd's algorithm that is adjusted to the anisotropic covariance matrices.
\end{abstract}

\section{Introduction}

Clustering is a fundamentally important task in statistics and machine learning \cite{friedman2001elements, bishop2006pattern}. The most popular and  studied model for clustering is  the Gaussian Mixture Model (GMM) \cite{pearson1894contributions, titterington1985statistical} which can be written as
\begin{align*}
Y_j  &= \theta^*_{z^*_j} + \epsilon_j, \text{ where }\epsilon_j \stackrel{ind}{\sim}  \mathn(0,\Sigma^*_{z^*_j}), \forall j\in[n].
\end{align*}
Here $Y=(Y_1,\ldots,Y_n)$ are the observations with $n$ being the sample size. Let $k$ be the number of clusters that is assumed to be known. Denote $\{\theta^*_a\}_{a\in[k]}$ to be  unknown centers and $\{\Sigma^*_a\}$ to be  unknown  covariance matrices for the $k$ clusters. Let $z^*\in[k]^n$ be the cluster structure  such that  for each index $j\in[n]$, the value of $z^*_j$ indicates which cluster the $j$th data point belongs to. The goal is to recover $z^*$ from $Y$. For any estimator $\hat z$, its clustering performance is measured by a misclustering error rate $h(\hat z,z^*)$ which will be introduced later in (\ref{eqn:h}).

There have been increasing interests in the theoretical and algorithmic analysis of  clustering under  GMMs. When the GMM is isotropic (that is, all the covariance matrices $\{\Sigma^*_{a}\}_{a\in [k]}$ are equal to the same identity matrix), \cite{lu2016statistical} obtains the minimax rate for clustering which takes a form of $\exp(-(1+o(1)) (\min_{a\neq b} \|\theta^*_a - \theta^*_b\|)^2/8)$ under the loss $h(\hat z,z^*)$.  Various methods have been studied in the isotropic setting. 
$k$-means clustering \cite{macqueen1967some} might be the most natural choice but it is NP-hard \cite{dasgupta2008hardness}. As a local approach to optimize the $k$-mean objects, Lloyd's algorithm  \cite{lloyd1982least} is one of the most popular clustering algorithms  and has achieved many successes in different disciplines \cite{wu2008top}.
 \cite{lu2016statistical, gao2019iterative} establishes computational and statistical guarantees for the Lloyd's algorithm by
showing  it achieves the optimal rates after a few iterations provided with some decent initialization. Another popular  approach to clustering especially for  high-dimensional data is  spectral clustering \cite{von2007tutorial, spielman1996spectral, VempalaWang04}, which is an umbrella
term for clustering after a dimension reduction through a spectral decomposition. \cite{loffler2019optimality, Ndaoud19, AbbeFanWang20}  proves the spectral clustering also achieves the optimality under the isotropic GMM. Another line of work is to consider
semidefinite programming (SDP) as a convex relaxation of the $k$-means objective.
Its statistical properties have been studied in \cite{fei2018hidden, giraud2018partial}.

In spite of  all the  exciting results, most of the existing literature focuses on  isotropic GMMs, and clustering under the anisotropic case where the covariance matrices are not necessarily the identity matrix is not well-understood. The results of some papers \cite{lu2016statistical, fei2018hidden} hold under  sub-Gaussian mixture models, where the errors $\epsilon_j  $ are assumed to follow some sub-Gaussian distribution with variance proxy  $\sigma^2$. It seems that their result already covers the anisotropic case, as $\{\mathn(0,\Sigma^*_a)\}_{a\in[k]}$ are indeed sub-Gaussian distributions. However, from a minimax point of view, among all the sub-Gaussian distributions with  variance proxy  $\sigma^2$, the least favorable case (the case where clustering is the most difficult) is when the errors are $\mathn(0,\sigma^2)$. Therefore, the minimax rates for clustering under the sub-Gaussian  mixture model is essentially the one under  isotropic GMMs, and methods such as the Lloyd's algorithm that requires no covariance matrix information can be rate-optimal.  As a result, the aforementioned results are all for isotropic GMMs.

A few papers have explored the direction of clustering under anisotropic GMMs.
 \cite{brubaker2008isotropic} gives a polynomial-time clustering algorithm that provably works well when the Gaussian distributions are well separated by hyperplanes.  Their idea is further developed in \cite{kalai2010efficiently} which allows the Gaussians to be overlapped with each other but only for two-cluster cases. A recent paper \cite{wang2020efficient} proposes another method for clustering under a balanced mixture of two elliptical distributions. They give a provable upper bound of their clustering performance with respect to an excess risk.  Nevertheless, it remains unknown what is the fundamental limit of  clustering under the anisotropic GMMs and whether there is any polynomial-time  procedure that achieves it.
 

In this paper, we will investigate the  optimal rates of the clustering task under two anisotropic GMMs. Model 1 is when the covariance matrices are all equal to each other (i.e., homogeneous) and are equal to some unknown matrix $\Sigma^*$. Model 2 is more flexible, where the covariance matrices are unknown and are not necessarily equal to each other (i.e., heterogeneous). The contribution of this paper is two-fold, summarized as follows. 

Our first contribution is on the fundamental limits. We obtain the minimax lower bound for clustering under the anisotropic GMMs with respect to the loss $h(\hat z,z^*)$. We show it takes the form 
\begin{align*}
\inf_{\hat z}\sup_{z^*\in[k]^n}\E h(z,z^*)\geq \ebr{-(1+o(1)) \frac{(\text{signal-to-noise ratio})^2}{8}},
\end{align*}
where the signal-to-noise ratio under  Model 1 is equal to $ \min_{a,b\in[k]:a\neq b} \|(\theta^*_a - \theta^*_b)^T \Sigma^{*-\frac{1}{2}}\|$ and the one for  Model 2 is more complicated. For both models, we can see the minimax rates depend not only on the centers but also the covariance matrices. This is different from the isotropic case whose signal-to-noise ratio is $\min_{a\neq b} \|\theta^*_a - \theta^*_b\|$. Our results precisely capture the role the covariance matrices play in the clustering problem. It shows covariance matrices impact the fundamental limits of the clustering problem through complicated interaction with the centers especially in  Model 2. The minimax lower bounds are obtained by establishing  connections with  Linear Discriminant Analysis (LDA) and  Quadratic Discriminant Analysis (QDA).

Our second and more important contribution is on the computational side. We propose a computationally feasible and rate-optimal algorithm for the anisotropic GMM. Popular methods including the Lloyd's algorithm and the spectral clustering no longer work well as they are developed under the isotropic case and only consider the distances among the centers \cite{brubaker2008isotropic}. We study an \emph{adjusted Lloyd's algorithm} which estimates the covariance matrices in each iteration and recovers the clusters using the covariance matrix information. It can also be seen as a hard EM algorithm \cite{dempster1977maximum}. As an iterative algorithm, we give  a statistical and computational guarantee and guidance to practitioners  by showing  that it obtains the minimax lower bound within $\log n$ iterations.
That is, let $z^{(t)}$ be the output of the algorithm after $t$ iterations, we have with high probability,
\begin{align*}
h(z^{(t)},z^*)\leq \ebr{-(1+o(1)) \frac{(\text{signal-to-noise ratio})^2}{8}},
\end{align*}
holds for all $t\geq \log n$. The algorithm can be initialized by popular methods such as the spectral clustering or the Lloyd's algorithm. In  numeric studies, we show the proposed algorithm improves greatly from the two aforementioned methods under  anisotropic GMMs, and matches the optimal exponent given  in the minimax lower bound.

\paragraph{Paper Organization.} The remaining paper is organized as follows. In Section \ref{sec:homo}, we study  Model 1 where the covariance matrices are unknown but homogeneous. In Section \ref{sec:hetero}, we consider  Model 2 where covariance matrices are unknown and heterogeneous. For both cases, we obtain the minimax lower bound for clustering and study the adjusted Lloyd's algorithm. In Section \ref{sec:numeric}, we provide a numeric comparison with other popular methods. The proofs of theorems in Section \ref{sec:homo} are given in Section \ref{subsec:proof} and the proofs for Section \ref{sec:hetero} are included in Section \ref{subsect:proof.hetero}. All the technical lemmas are included in Section \ref{sec:technical}.

\paragraph{Notation.} Let $[m]=\{1,2,\ldots, m\}$ for any positive integer $m$. For any set $S$, we denote $\abs{S}$ for its cardinality.
For any matrix $X\in\mathr^{d\times d}$, we denote $\lambda_1(X)$ to be its smallest eigenvalue and $\lambda_d(X)$ to be its largest eigenvalue. In addition, we denote $\norm{X}$ to be its operator norm. For any two vectors $u,v$ of the same dimension, we denote $\iprod{u}{v}=u^Tv$ to be its inner product. For any positive integer $d$, we denote $I_d$ to be the $d\times d$ identity matrix. We denote $\mathcal{N}(\mu,\Sigma)$ to be the normal distribution with mean $\mu$ and covariance matrix $\Sigma$. We denote $\indic{\cdot}$ to be the indicator function. Given two positive sequences $a_n,b_n$, we denote $a_n =o(b_n)$ if $a_n/b_n=o(1)$ when $n\rightarrow\infty$. We write $a_n \lesssim b_n$ if there exists a constant $C>0$ independent of $n$ such that $a_n\leq C b_n$ for all $n$.

\section{GMM with Unknown but Homogeneous Covariance Matrices}
\label{sec:homo}

\subsection{Model}
We first consider a GMM where  covariance matrices of different clusters are unknown but are assumed to be equal to each other. 
The data generating progress can be displayed as follow:
\begin{flalign}\label{eqn:model_I}
   \textbf{Model 1:} && Y_j  &= \theta^*_{z^*_j} + \epsilon_j, \text{ where }\epsilon_j \stackrel{ind}{\sim}  \mathn(0,\Sigma^*), \forall j\in[n]. &
\end{flalign}
It is called Stretched Mixture Model in \cite{wang2020efficient} as the density of $Y_j$ is elliptical.
Throughout the paper, we call it  \emph{Model 1} for simplicity and to distinguish it from a  more complicated model that will be introduced in Section \ref{sec:hetero}. The goal is to recover the underlying cluster assignment vector $z^*$ from $Y$.

\paragraph{Signal-to-noise Ratio.}
Define  the signal-to-noise ratio
\begin{align}\label{eqn:SNR_I}
\snr = \min_{a,b\in[k]:a\neq b} \|(\theta^*_a - \theta^*_b)^T \Sigma^{*-\frac{1}{2}}\|,
\end{align}
which is a function of all the centers $\{\theta^*_a\}_{a\in[k]}$ and the covariance matrix $\Sigma^*$. As we will show later in Theorem \ref{thm:lower1}, $\snr$ captures the difficulty of the clustering problem and determines the minimax rate. For the geometric interpretation of $\snr$, we defer it after presenting Theorem \ref{thm:main1}.

A quantity closely related to $\snr$ is the minimum distance among the centers. Define $\Delta$ as
\begin{align}\label{eqn:delta}
\Delta = \min_{a,b\in[k]:a\neq b}  \norm{\theta_a^* - \theta_b^*}.
\end{align}
Then we can see $\snr$ and $\Delta$ are in the same order if all eigenvalues of the covariance matrix $\Sigma^*$ is assumed to be constants. If $\Sigma^*$ is further assumed to be an identical matrix, then we have $\snr$ equal to $\Delta$. As a result, in \cite{lu2016statistical, gao2019iterative, loffler2019optimality} where  isotropic GMMs are studied, $\Delta$ plays the role of signal-to-noise ratio and appears in the minimax rates.

\paragraph{Loss Function.}
To measure the clustering performance, we consider the misclustering error rate defined as follows. For any $z,z^*\in [k]^n$, we define
\begin{align}\label{eqn:h}
h(z,z^*) =  \min_{\psi\in\Psi} \frac{1}{n}\sum_{j=1}^n \indic{\psi(z_j)\neq z^*_j},
\end{align}
where $\Psi = \cbr{\psi: \psi \text{ is a bijection from }[k]\text{ to }[k]}$. Here the minimum is over all the permutations over $[k]$ due to the identifiability issue of the labels $1,2,\ldots,k$. Another loss that will be used  is $\ell(z,z^*)$ defined as
\begin{align}\label{eqn:l}
\ell(z,z^*) = \sum_{j=1}^n \norm{\theta^*_{z_j} - \theta^*_{z^*_j}}^2.
\end{align}
It also measures the clustering performance of $z$ considering the distances among the true centers. It is related to $h(z,z^*)$ as $h(z,z^*)\leq \ell(z,z^*)/(n\Delta^2)$ and hence provides more information than $h(z,z^*)$. We will mainly use $\ell(z,z^*)$ in the technical analysis but will eventually present the results using $h(z,z^*)$ which is more interpretable.



\subsection{Minimax Lower Bound}\label{sec:1_lower}
We first establish the minimax lower bound for the clustering problem under  Model 1.
\begin{theorem}\label{thm:lower1}
Under the assumption $\frac{\snr}{\sqrt{\log k}} \rightarrow\infty$, we have 
\begin{align}\label{eqn:lower1}
\inf_{\hat z} \sup_{z^*\in [k]^n} \E h(z,z^*) \geq  \ebr{-(1+o(1))\frac{\snr^2}{8}}.
\end{align}
If $\snr=O(1)$ instead, we have $\inf_{\hat z} \sup_{z^*\in[k]^n} \E h(z,z^*) \geq c$ for some constant $c>0$.
\end{theorem}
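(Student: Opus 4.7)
My plan is to reduce the minimax problem to a collection of two-point Bayes testing problems of $\mathn(\thetaastar,\Sigmastar)$ against $\mathn(\thetabstar,\Sigmastar)$ for the pair $(a,b)$ achieving the minimum in \eqref{eqn:SNR_I}, lower-bound each by the classical Fisher LDA error $\Phi(-\snr/2)$, and then convert this Gaussian tail into the exponential rate $\exp(-(1+o(1))\snr^2/8)$. This is the standard recipe for clustering lower bounds (cf.\ the isotropic argument in \cite{lu2016statistical}); the content specific to Model 1 is the identification of the LDA discriminant with $\snr$.

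First I would fix $a\neq b\in[k]$ attaining the minimum in \eqref{eqn:SNR_I}. I restrict the supremum over $z^*$ to a subparameter space in which most indices carry a pinned, balanced label (e.g.\ $z_j^*=\lceil jk/n\rceil$ for $j$ outside a set $T\subset[n]$ of cardinality $\lfloor n/(2k)\rfloor$) while the labels $\{z_j^*\}_{j\in T}$ range freely over $\{a,b\}$. On this sub\-family a uniform prior on $\{a,b\}^T$ reduces the worst-case risk to the Bayes risk. The pinning also trivializes the permutation in $h(\hat z,z^*)$: any $\psi\in\Psi$ that is not the identity misclassifies at least $\lfloor n/(2k)\rfloor$ pinned indices, contributing an $\Omega(1/k)$ term to the Hamming ratio, which is strictly larger than the $\exp(-(1+o(1))\snr^2/8)=o(1/k)$ bound I aim to prove (using $\snr^2\gg\log k$). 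Hence the optimizing bijection is the identity and
\[
h(\hat z,z^*)\;\geq\;\frac{1}{n}\sum_{j\in T}\indic{\hat z_j\neq z_j^*}.
\]

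Next, conditional on $\{z_\ell^*\}_{\ell\neq j}$ and the data $\{Y_\ell\}_{\ell\neq j}$, the Bayes-optimal test of $z_j^*=a$ against $z_j^*=b$ based on $Y_j\sim\mathn(\thetazjstarstar,\Sigmastar)$ is Fisher's LDA rule, with minimum error probability
\[
\Phi\!\left(-\tfrac{1}{2}\|(\thetaastar-\thetabstar)^T\Sigmastar^{-1/2}\|\right)\;=\;\Phi(-\snr/2).
\]
Applying the standard Mill's ratio asymptotic $\Phi(-x)=\exp(-(1+o(1))x^2/2)$ as $x\to\infty$ (valid since $\snr\to\infty$) gives a per-index Bayes error of $\exp(-(1+o(1))\snr^2/8)$. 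Summing over $j\in T$ and dividing by $n$ produces the desired bound once the prefactor $|T|/n\asymp 1/k$ is absorbed into the $o(1)$ in the exponent, which is precisely what the assumption $\snr/\sqrt{\log k}\to\infty$ permits. The $\snr=O(1)$ regime follows from exactly the same reduction without the tail asymptotic, since $\Phi(-\snr/2)$ is then bounded below by a positive constant.

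The main obstacle I expect is the bookkeeping in the reduction step: one must check carefully that under the uniform prior on $\{a,b\}^T$, conditioning on everything outside a single coordinate $j\in T$ genuinely eliminates all dependence and recovers the clean two-sample LDA problem (here independence of the labels across $T$ makes this transparent), and that the pinned subfamily is actually contained in $[k]^n$ with every cluster represented. The Gaussian tail conversion and the permutation handling are routine once $|T|\asymp n/k$ is chosen so that $\log k = o(\snr^2)$ swallows the $1/k$ prefactor.
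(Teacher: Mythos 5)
Your proposal is correct and follows essentially the same route as the paper's proof: restrict to a subfamily with pinned, balanced labels on most coordinates and free $\{a,b\}$-valued labels on a set of size $\asymp n/k$, dispose of the permutation in $h$ by noting that a non-identity matching costs $\Omega(1/k)$ while the target rate is $o(1/k)$ under $\snr^2\gg\log k$, reduce to per-coordinate two-point LDA testing with Bayes error $\Phi(-\snr/2)=\exp(-(1+o(1))\snr^2/8)$, and absorb the $1/k$ prefactor into the exponent. The only difference is cosmetic (the paper phrases the per-coordinate reduction via an explicit pairing of the parameter space and cites its Lemma \ref{lem:lda} rather than Mill's ratio directly).
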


Theorem \ref{thm:lower1} allows the cluster numbers $k$ to grow with $n$ and shows that $\snr\rightarrow\infty$ is a necessary condition to have a consistent clustering if $k$ is a constant.
Theorem \ref{thm:lower1} holds for any arbitrary $\{\theta^*_a\}_{a\in[k]}$ and $\Sigma^*$, and the minimax lower bound depend on them through $\snr$. The parameter space is only for $z^*$ while $\{\theta^*_a\}_{a\in[k]}$ and $\Sigma^*$ are fixed. Hence,  (\ref{eqn:lower1}) can be interpreted as a pointwise result, and it captures precisely the explicit dependence of the minimaxity on  $\{\theta^*_a\}_{a\in[k]}$ and $\Sigma^*$.

Theorem \ref{thm:lower1} is closely related to the Linear Discriminant Analysis (LDA). If there are only two clusters, and if the centers and the covariance matrix are known, then estimating each $z^*_j$ is exactly the task of LDA: we want to figure out which normal distribution an observation $Y_j$ is generated from, where the two normal distributions have different means but the same covariance matrix.  In fact, this is also how Theorem \ref{thm:lower1} is proved: we will first reduce the estimation problem of $z^*$ into a two-point hypothesis testing problem for each individual $z^*_j$, the error of which is given   in Lemma \ref{lem:lda} by the analysis of LDA, and then aggregate all the testing errors together.

In the following lemma, we give a sharp and explicit formula for the testing error of the LDA. Here we have two normal distributions $\mathn(\theta^*_1,\Sigma^*)$ and $\mathn(\theta^*_2,\Sigma^*)$ and an observation $X$ that is generated from one of them. We are interested in estimating which distribution it is from. By Neyman-Pearson lemma, it is known that the likelihood ratio test $\indic{2(\theta^*_2  - \theta^*_1)^T (\Sigma^*)^{-1}X \geq \theta_2^{*T} (\Sigma^*)^{-1}\theta^*_2- \theta^{*T}_1( \Sigma^*)^{-1} \theta^*_1}$ is the optimal testing procedure. Then by using the Gaussian tail probability, we are able to obtain the optimal testing error, the lower bound of which is given in Lemma  \ref{lem:lda}.





\begin{lemma}[Testing Error for Linear Discriminant Analysis]\label{lem:lda}
Consider two hypotheses $\mathbb{H}_0: X\sim \mathn(\theta_1^*,\Sigma^*)$ and $\mathbb{H}_1: X \sim \mathn(\theta_2^*,\Sigma^*)$. Define a testing procedure $$\phi =\indic{2(\theta^*_2  - \theta^*_1)^T (\Sigma^*)^{-1}X \geq \theta_2^{*T} (\Sigma^*)^{-1}\theta^*_2- \theta^{*T}_1( \Sigma^*)^{-1} \theta^*_1}.$$
Then we have
$
\inf_{\hat \phi} (\p_{\mathbb{H}_0} (\hat \phi = 1) +\p_{\mathbb{H}_1} (\hat \phi = 0)) =\p_{\mathbb{H}_0} \br{ \phi = 1} +\p_{\mathbb{H}_1} \br{ \phi = 0}.
$
If $\normp{(\theta^*_2 - \theta^*_1)^T (\Sigma^*)^{-\frac{1}{2}}} \rightarrow \infty$, we have 
\begin{align*}
\inf_{\hat \phi} (\p_{\mathbb{H}_0} (\hat \phi = 1) +\p_{\mathbb{H}_1} (\hat \phi = 0)) \geq \ebr{-(1+o(1))\frac{ \normp{(\theta^*_2 - \theta^*_1)^T (\Sigma^*)^{-\frac{1}{2}}}^2}{8}}.
\end{align*}
Otherwise, $\inf_{\hat \phi} (\p_{\mathbb{H}_0} (\hat \phi = 1) +\p_{\mathbb{H}_1} (\hat \phi = 0)) \geq c$ for some constant $c>0$.
\end{lemma}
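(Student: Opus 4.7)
The plan is to split the proof into two parts: (i) verifying via the Neyman--Pearson lemma that the stated $\phi$ attains the optimal sum of Type I and Type II errors, and (ii) computing that optimal sum in closed form and then applying a Gaussian tail lower bound.

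For the optimality in part (i), I would write out the log-likelihood ratio of $\mathn(\theta_2^*,\Sigma^*)$ versus $\mathn(\theta_1^*,\Sigma^*)$. The quadratic $x^T(\Sigma^*)^{-1}x$ terms cancel, leaving
\[
\log\frac{f_1(x)}{f_0(x)} = (\theta_2^*-\theta_1^*)^T(\Sigma^*)^{-1}x - \tfrac{1}{2}\bigl(\theta_2^{*T}(\Sigma^*)^{-1}\theta_2^* - \theta_1^{*T}(\Sigma^*)^{-1}\theta_1^*\bigr).
\]
Rejecting $\mathbb{H}_0$ when this is nonnegative is exactly the event defining $\phi$. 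By the Neyman--Pearson lemma (equivalently, because $\phi$ is the Bayes rule against equal priors under $0$--$1$ loss), $\phi$ minimizes $\p_{\mathbb{H}_0}(\hat\phi=1)+\p_{\mathbb{H}_1}(\hat\phi=0)$ over all tests.

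For part (ii), I would compute the two error probabilities directly. Under $\mathbb{H}_0$ write $X=\theta_1^*+\epsilon$ with $\epsilon\sim\mathn(0,\Sigma^*)$. Using the identity $\theta_2^{*T}(\Sigma^*)^{-1}\theta_2^*-\theta_1^{*T}(\Sigma^*)^{-1}\theta_1^* = (\theta_2^*+\theta_1^*)^T(\Sigma^*)^{-1}(\theta_2^*-\theta_1^*)$, the event $\{\phi=1\}$ reduces to
\[
2(\theta_2^*-\theta_1^*)^T(\Sigma^*)^{-1}\epsilon \;\geq\; (\theta_2^*-\theta_1^*)^T(\Sigma^*)^{-1}(\theta_2^*-\theta_1^*)=\normp{(\theta_2^*-\theta_1^*)^T(\Sigma^*)^{-1/2}}^2.
\]
The left-hand side is centered Gaussian with variance $4\normp{(\theta_2^*-\theta_1^*)^T(\Sigma^*)^{-1/2}}^2$, so standardizing gives $\p_{\mathbb{H}_0}(\phi=1) = \bar\Phi\bigl(\tfrac{1}{2}\normp{(\theta_2^*-\theta_1^*)^T(\Sigma^*)^{-1/2}}\bigr)$, and by symmetry $\p_{\mathbb{H}_1}(\phi=0)$ gives the same quantity. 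Hence the optimal total error is $2\bar\Phi\bigl(\tfrac{1}{2}\normp{(\theta_2^*-\theta_1^*)^T(\Sigma^*)^{-1/2}}\bigr)$.

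To conclude, in the regime $\normp{(\theta_2^*-\theta_1^*)^T(\Sigma^*)^{-1/2}}\to\infty$ I would invoke the standard Mills-ratio style lower bound $\bar\Phi(x)\geq \tfrac{x}{x^2+1}\tfrac{1}{\sqrt{2\pi}}e^{-x^2/2}$, which with $x=\tfrac{1}{2}\normp{(\theta_2^*-\theta_1^*)^T(\Sigma^*)^{-1/2}}$ yields the claimed $\exp\!\bigl(-(1+o(1))\normp{(\theta_2^*-\theta_1^*)^T(\Sigma^*)^{-1/2}}^2/8\bigr)$ lower bound, the factor $8$ arising from halving once for the threshold and once for squaring in the Gaussian exponent. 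In the bounded regime $\normp{(\theta_2^*-\theta_1^*)^T(\Sigma^*)^{-1/2}}=O(1)$, the argument to $\bar\Phi$ stays bounded, so $\bar\Phi$ of it is bounded below by a positive constant. I do not anticipate any real obstacle: the only place to be careful is the algebraic simplification of the threshold and the correct computation of the variance of the linear statistic under each hypothesis.
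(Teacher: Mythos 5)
Your proposal is correct and follows essentially the same route as the paper: Neyman--Pearson for optimality of $\phi$, algebraic reduction of the error event to a one-dimensional Gaussian tail at threshold $\tfrac{1}{2}\normp{(\theta_2^*-\theta_1^*)^T(\Sigma^*)^{-1/2}}$, and a Mills-ratio-type lower bound to absorb the polynomial prefactor into the $(1+o(1))$ exponent. No gaps.
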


\begin{figure}[ht]
\centering
\includegraphics[width=0.45\textwidth, trim = 16 40 0 50, clip]{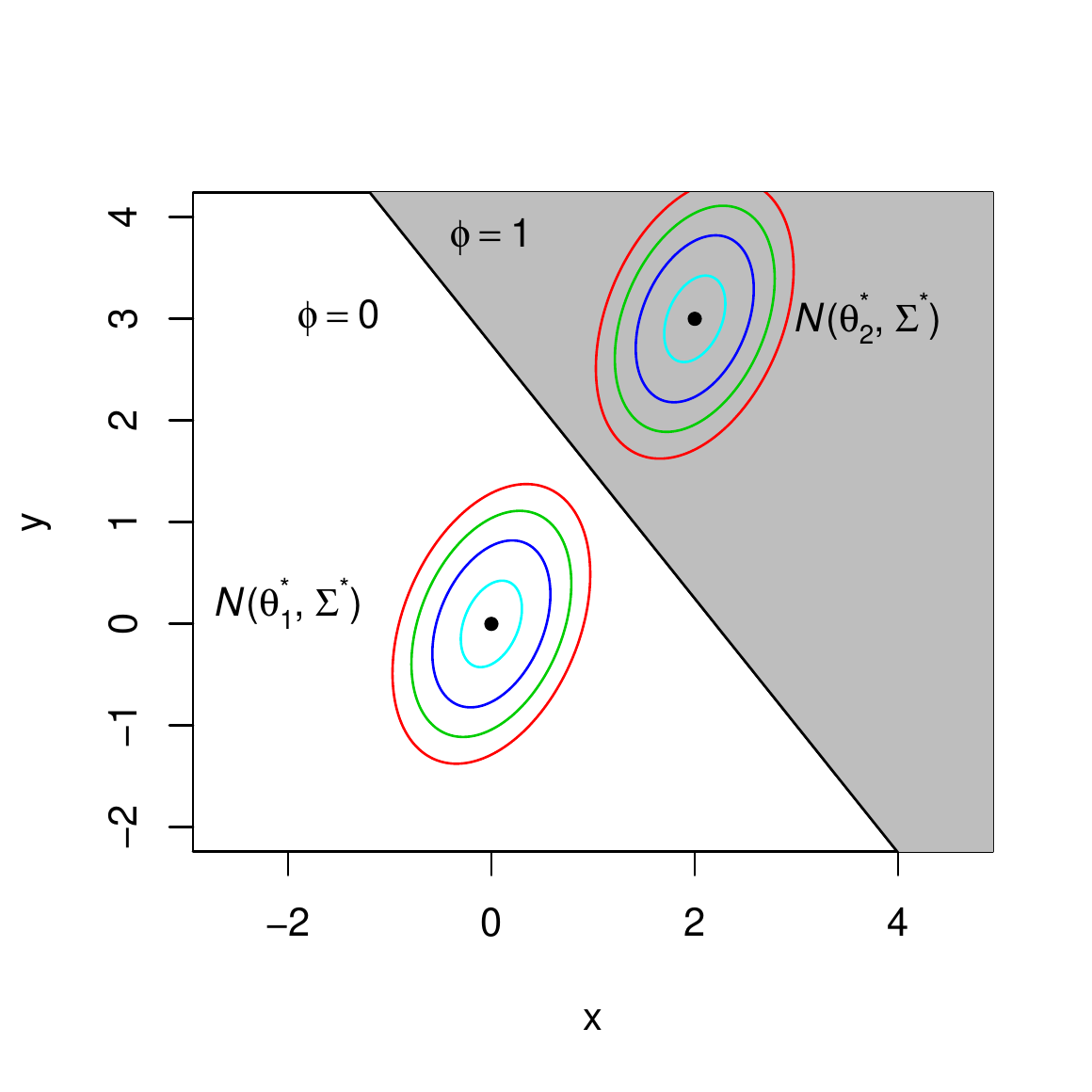}
\includegraphics[width=0.45\textwidth, trim = 16 40 0 50, clip]{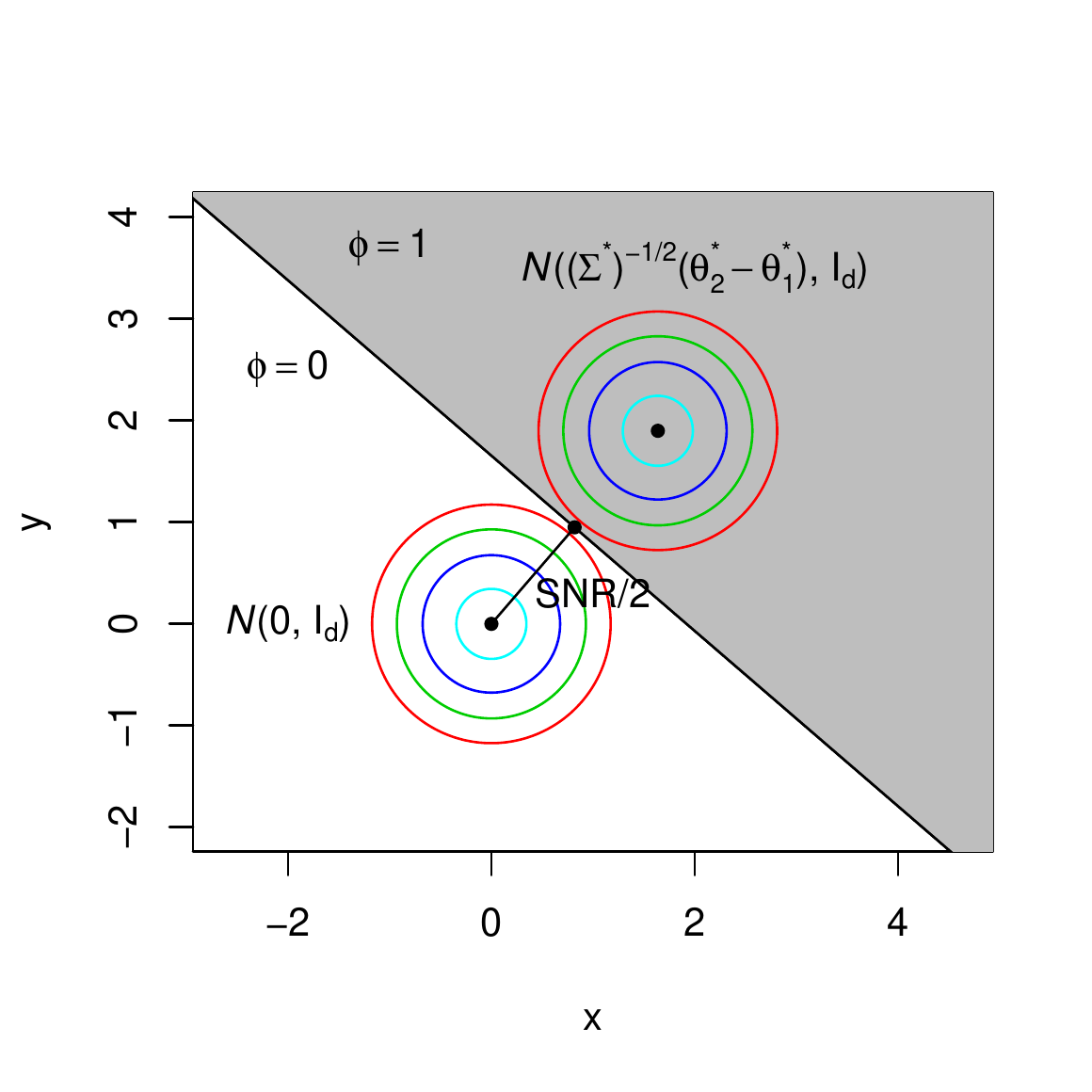}
\caption{A geometric interpretation of  \snr.  \label{fig:1}}
\end{figure}

With the help of Lemma \ref{lem:lda}, we have a geometric interpretation of $\snr$. In the left panel of Figure \ref{fig:1}, we have two normal distributions $\mathn(\theta^*_1,\Sigma^*)$ and $\mathn(\theta^*_2,\Sigma^*)$ for $X$ to be generated from.  The black line represents the optimal testing procedure $\phi$ displayed in Lemma  \ref{lem:lda} that divides the space into two half-spaces. To calculate the testing error, we can make a transformation $X' = (\Sigma^*)^{-\frac{1}{2}}(X-\theta_1^*)$ so that the two normal distributions become isotropic: $\mathn(0,I_d)$ and $\mathcal{N}((\Sigma^*)^{-\frac{1}{2}}(\theta^*_2 - \theta^*_1),I_d)$ as displayed in the right panel. Then the distance between the two centers are $\|(\Sigma^*)^{-\frac{1}{2}}(\theta^*_2 - \theta^*_1)\|$, and the distance between a center and the black curve is  half of it. Then $\p_{H_0} (\hat \phi = 1)$ is  the probability of $\mathn(0,I_d)$  in the grayed area, which is equal to $\exp(-(1+o(1))\|(\Sigma^*)^{-\frac{1}{2}}(\theta^*_2 - \theta^*_1)\|^2/8)$ by Gaussian tail probability.
As a result, $\|(\Sigma^*)^{-\frac{1}{2}}(\theta^*_2 - \theta^*_1)\|$ is the effective distance between the two centers of $\mathn(\theta^*_1,\Sigma^*)$ and $\mathn(\theta^*_2,\Sigma^*)$ for the clustering problem, considering the geometry of the covariance matrix. Since we have multiple clusters, $\snr$ defined in (\ref{eqn:SNR_I}) can be interpreted as  the minimum effective distances among the centers $\{\theta^*_a\}_{a\in[k]}$ considering the anisotropic structure of $\Sigma^*$  and it captures the intrinsic difficulty of the clustering problem.

\subsection{Rate-Optimal Adaptive Procedure}\label{sec:model1_alg}

In this section, we will propose a computationally feasible and rate-optimal procedure for clustering under  Model 1. Summarized in Algorithm \ref{alg:1}, the proposed algorithm is a variant of the Lloyd algorithm. Starting from some initialization, it updates the estimation of the centers $\{\theta^*_a\}_{a\in[k]}$ (in (\ref{eqn:1_theta})), the covariance matrix $\Sigma^*$ (in (\ref{eqn:1_Sigma})), and the cluster assignment vector $z^*$ (in (\ref{eqn:1_z})) iteratively. It differs from the Lloyd's algorithm in the sense that Lloyd's algorithm is for isotropic GMMs without the covariance matrix update (\ref{eqn:1_Sigma}). In addition, in (\ref{eqn:1_z}) it updates the estimation of $z^*_j$  by $ \argmin_{a\in[k]} (Y_j -  \theta_a^{(t)} )^T(Y_j -  \theta_a^{(t)} )$ instead. To distinguish them from each other, we call the classical Lloyd's algorithm as the \emph{vanilla Lloyd's algorithm}, and name Algorithm \ref{alg:1} as the \emph{adjusted Lloyd's algorithm}, as it is adjusted to the unknown and anisotropic covariance matrix.

Algorithm \ref{alg:1} can also be interpreted as a hard EM algorithm. If we apply the Expectation Maximization (EM) for  Model 1, we will have an M step for estimating parameters $\{\theta^*_a\}_{a\in[k]}$ and $\Sigma^*$ and an E step for estimating $z^*$.
It turns out the updates on the parameters (\ref{eqn:1_theta}) - (\ref{eqn:1_Sigma})  are exactly the same as the updates of EM (M step). However, the update on $z^*$ in Algorithm \ref{alg:1} is different from that in the EM. Instead of taking a conditional expectation (E step), we also take a maximization in (\ref{eqn:1_z}). As a result, Algorithm \ref{alg:1} consists solely of M steps for both the parameters and $z^*$, which is known as a hard EM algorithm.


\begin{algorithm}[ht]
\KwIn{Data $Y$, number of clusters $k$, an initialization $z^{(0)}$, number of iterations $T$}
\KwOut{$z^{(T)}$}
\For{ $t=1,\ldots,T$}{

Update the centers:
 \begin{align}\label{eqn:1_theta}
 \theta_a^{(t)} = \frac{\sum_{j\in[n]} Y_j\indic{z^{(t-1)}_j=a}}{\sum_{j\in[n]}\indic{z^{(t-1)}_j=a}},\quad \forall a\in[k]. \;
 \end{align}
 
Update the covariance matrix:
\begin{align}\label{eqn:1_Sigma}
 \Sigma^{(t)}  =  \frac{ \sum_{a\in[k]}\sum_{j\in[n]} (Y_j -   \theta_a^{(t)}) (Y_j -   \theta_a^{(t)})^T \indic{z^{(t-1)}_j=a}}{n}. \;
\end{align}

Update the cluster estimations:
\begin{align}\label{eqn:1_z}
z^{(t)}_j = \argmin_{a\in[k]} (Y_j -  \theta_a^{(t)} )^T (\Sigma^{(t)})^{-1}(Y_j -  \theta_a^{(t)} ),\quad j\in[n].
\end{align}
}
\caption{Adjusted Lloyd's Algorithm for Model 1 (\ref{eqn:model_I}). \label{alg:1}}
\end{algorithm}

In Theorem \ref{thm:main1}, we give a computational and statistical guarantee of the proposed Algorithm \ref{alg:1}. We show that starting from a decent initialization, within $\log n$ iterations, Algorithm \ref{alg:1} achieves an error rate $\ebr{-(1+o(1))\snr^2/8}$ which matches with the minimax lower bound  given in Theorem \ref{thm:lower1}. As a result, Algorithm \ref{alg:1} is a rate-optimal procedure. In addition, the algorithm is fully adaptive to the unknown $\{\theta^*_a\}_{a\in[k]}$ and $\Sigma^*$. The only information assumed to be known is $k$ the number of clusters, which is commonly assumed to be known in clustering literature \cite{lu2016statistical, gao2019iterative, loffler2019optimality}. The theorem also shows that the number of iterations to achieve the optimal rate is at most $\log n$, which provides  implementation guidance to practitioners.


\begin{theorem}\label{thm:main1}
Assume $kd=O(\sqrt{n})$ and $\min_{a\in k}\sum_{j=1}^n\mathbb{I}\{z^*_j = a\}\geq  \frac{\alpha n}{k}$ for some constant $\alpha>0$. Assume $\frac{\snr}{ k}\rightarrow\infty$ and
$\lambda_{d}(\Sigma^*) / \lambda_{1}(\Sigma^*)=O(1)$.
For Algorithm \ref{alg:1}, suppose $z^{(0)}$ satisfies $\ell(z^{(0)},z^*) = o(n/k)$ with probability at least $1-\eta$. Then with probability at least $1-\eta - n^{-1} - \exp(-\snr)$, we have
\begin{align*}
h(z^{(t)},z^*) \leq \ebr{-(1+o(1))\frac{\snr^2}{8}},\quad \text{for all }t\geq \log n.
\end{align*}
\end{theorem}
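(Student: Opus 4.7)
The plan is to prove Theorem \ref{thm:main1} by induction on $t$, establishing a one-step contraction for the loss $\ell(z^{(t)}, z^*)$ that reduces the error geometrically until it hits a noise floor matching the minimax rate, then translating back to $h(\cdot,\cdot)$ using the inequality $h(z,z^*)\le \ell(z,z^*)/(n\Delta^2)$. The inductive hypothesis is that $\ell(z^{(t-1)}, z^*) = o(n/k)$ and the cluster sizes of $z^{(t-1)}$ are comparable to those of $z^*$; the conclusion is either the same bound with the loss at least halved, or already below the target noise floor.

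First I would establish deterministic concentration guarantees for the parameter updates in terms of $\ell(z^{(t-1)}, z^*)$. Under the inductive hypothesis, each cluster's empirical mean and empirical covariance are taken over indices that mostly coincide with the true cluster, so
\begin{align*}
\norm{\theta^{(t)}_a - \theta^*_a} &\lesssim \sqrt{\tfrac{k d}{n}} + \tfrac{1}{n_a}\bigl(\text{cross-label perturbation}\bigr),\\
\opnorm{\Sigma^{(t)} - \Sigma^*} &\lesssim \sqrt{\tfrac{d}{n}} + \tfrac{1}{n}\ell(z^{(t-1)},z^*) + \text{lower-order},
\end{align*}
where the first summand in each bound comes from standard Gaussian mean and covariance concentration, and the second summand quantifies the label perturbation (a sum over $j$ with $z^{(t-1)}_j \ne z^*_j$ of terms bounded by $\norm{\theta^*_{z^{(t-1)}_j}-\theta^*_{z^*_j}}$ or its square, combined via Cauchy-Schwarz). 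The assumption $kd = O(\sqrt{n})$ together with $\snr/k\to\infty$ is what lets me conclude $\norm{\theta^{(t)}_a - \theta^*_a} = o(\snr\cdot \sqrt{\lambda_1(\Sigma^*)})$ and $\opnorm{(\Sigma^{(t)})^{-1}-(\Sigma^*)^{-1}} = o(1)/\lambda_1(\Sigma^*)$, with the bounded-condition-number assumption $\lambda_d/\lambda_1 = O(1)$ ensuring that $(\Sigma^{(t)})^{-1}$ stays well-conditioned.

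Second I would analyze one step of the label update (\ref{eqn:1_z}) as a perturbed LDA, following the same strategy as in Lemma \ref{lem:lda}. For a fixed $j$ with $z^*_j = a$, the event $\{z^{(t)}_j = b\}$ translates, after substituting $Y_j = \theta^*_a + \epsilon_j$ and expanding $(Y_j - \theta^{(t)}_b)^T(\Sigma^{(t)})^{-1}(Y_j - \theta^{(t)}_b) \le (Y_j - \theta^{(t)}_a)^T(\Sigma^{(t)})^{-1}(Y_j - \theta^{(t)}_a)$, into a linear inequality in $\epsilon_j$ of the form
\begin{align*}
2(\theta^{(t)}_b - \theta^{(t)}_a)^T(\Sigma^{(t)})^{-1}\epsilon_j \ge \norm{(\Sigma^{(t)})^{-1/2}(\theta^{(t)}_b - \theta^*_a)}^2 - \norm{(\Sigma^{(t)})^{-1/2}(\theta^{(t)}_a - \theta^*_a)}^2.
\end{align*}
Using the parameter bounds from the previous paragraph to replace $\theta^{(t)}, \Sigma^{(t)}$ by $\theta^*, \Sigma^*$ at the cost of a multiplicative $(1+o(1))$ in both sides, the resulting Gaussian tail probability is $\exp\!\bigl(-(1+o(1))\norm{(\theta^*_a-\theta^*_b)^T(\Sigma^*)^{-1/2}}^2/8\bigr)\le \exp(-(1+o(1))\snr^2/8)$. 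Multiplying by the squared center-gap $\norm{\theta^*_a-\theta^*_b}^2$ and summing over $j$ and $b\ne a$ then yields an expectation bound on $\ell(z^{(t)}, z^*)$, and Markov's inequality (or a Bernstein-type argument over the $n$ independent Gaussians) gives a high-probability version.

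Third I would put these pieces together into a one-step recursion
\begin{align*}
\ell(z^{(t)}, z^*) \le n\,\ebr{-(1+o(1))\tfrac{\snr^2}{8}} + \rho\, \ell(z^{(t-1)}, z^*),
\end{align*}
with contraction constant $\rho = o(1)$ coming from the fact that the perturbation piece of the label-update error scales like $\ell(z^{(t-1)},z^*)/\snr^2 \cdot \snr^2$ in a benign way (the same mechanism as in the vanilla Lloyd analysis of \cite{lu2016statistical, gao2019iterative}). Iterating this recursion from $\ell(z^{(0)}, z^*) = o(n/k)$ for $t\ge \log n$ iterations drives the initial term below the noise floor, leaving $\ell(z^{(t)},z^*)\le n\,\ebr{-(1+o(1))\snr^2/8}$; dividing by $n\Delta^2$ and noting $\Delta^2 \ge \snr^2 \cdot \lambda_1(\Sigma^*)$ (so $1/(\Delta^2)$ is absorbed into the $o(1)$ in the exponent) produces the stated bound on $h(z^{(t)}, z^*)$.

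The main obstacle is the second step: making sure that the estimation errors in $\theta^{(t)}$ and especially in $\Sigma^{(t)}$ perturb the LDA exponent only by a multiplicative $(1+o(1))$ factor, rather than by an additive term that would dominate when $\snr$ is large. This requires a careful accounting that the perturbation of the linear threshold scales like $o(\snr^2)$ (not $o(\snr)$), which is where the condition $\snr/k\to\infty$, the sample-size scaling $kd = O(\sqrt{n})$, and the bounded condition number $\lambda_d(\Sigma^*)/\lambda_1(\Sigma^*) = O(1)$ all enter crucially. A secondary challenge is to maintain, along the iteration, a uniform bound on the cluster sizes of $z^{(t)}$ and on the event that all of the above concentration bounds hold simultaneously; I would handle this by intersecting the $O(n)$ Gaussian events once at the outset and using the union bound to pay only the $n^{-1}$ and $\exp(-\snr)$ terms appearing in the final probability statement.
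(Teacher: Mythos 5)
Your proposal follows essentially the same route as the paper's proof: a one-step analysis decomposing the misclassification event into an ideal LDA term plus perturbation terms controlled by parameter concentration (with $kd=O(\sqrt{n})$ entering precisely to control $\|\Sigma^{(t)}-\Sigma^*\|$), a geometric contraction recursion $\ell(z^{(t)},z^*)\le n\exp(-(1+o(1))\snr^2/8)+\rho\,\ell(z^{(t-1)},z^*)$, Markov's inequality for the high-probability bound on the ideal error, and conversion to $h$ via $h\le \ell/(n\Delta^2)$. The only detail you gloss over is the final step: after $\log n$ iterations the contracted term $\rho^{t}\ell(z^{(0)},z^*)$ is merely $o(n^{-1})$ in the $h$-metric rather than literally below the noise floor, and the paper closes this by observing that $h$ takes values in $\{j/n: j\in[n]\cup\{0\}\}$, so any contribution smaller than $n^{-1}$ vanishes exactly.
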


We have remarks on the assumptions of
 Theorem \ref{thm:main1}. We allow the number of clusters $k$ to grow with $n$.
 When  $k$ is a constant, the assumption on $\snr\rightarrow\infty$ is the necessary  condition to have a consistent recovery of $z^*$ according to the minimax lower bound presented in Theorem \ref{thm:lower1}. The assumption on $\Sigma^*$ is to make sure the covariance matrix  is well-conditioned. The dimensionality $d$ is assumed to be at most $O(\sqrt{n})$, an assumption that is stronger than that in \cite{lu2016statistical, gao2019iterative, loffler2019optimality} which only needs $d=O(n)$. This is due to that compared to these papers, we need to estimate the covariance matrix $\Sigma^*$ and to have a control on the estimation error $\|\Sigma^{(t)} - \Sigma^*\|$.
 
 

The requirement for the initialization $\ell(z^{(0)},z^*) = o(n/k)$ can be fulfilled by simple procedures. A popular choice is the vanilla Lloyd's algorithm the performance of which is studied in \cite{lu2016statistical, gao2019iterative}. Since  $\epsilon_j$ are sub-Gaussian random variables with proxy variance $\lambdamax$, \cite{gao2019iterative} implies the vanilla Lloyd's algorithm output $\hat z$ satisfies $\ell(\hat z,z^*)\leq n\exp(-(1+o(1))\Delta^2/(8\lambdamax))$ with probability at least $1-\exp(-\Delta) -n^{-1}$, under the assumption that $\snr/k\rightarrow\infty$. Note that   \cite{gao2019iterative} is for isotropic GMMs, but its results can be extended to sub-Gaussian mixture models with nearly identical proof.  Then we have $\ell(\hat z,z^*) =o(n/k)$, as $\Delta^2/\lambdamax$ and $\snr^2$ are both in the same order under the assumption $\snr/k\rightarrow\infty$.
As a result, we immediately have the following corollary.
\begin{corollary}\label{cor:1}
Assume $kd=O(\sqrt{n})$ and $\min_{a\in k}\sum_{j=1}^n\mathbb{I}\{z^*_j = a\}\geq  \frac{\alpha n}{k}$ for some constant $\alpha>0$. Assume $\frac{\snr}{k}\rightarrow\infty$ and
$\lambda_{d}(\Sigma^*) / \lambda_{1}(\Sigma^*)=O(1)$. Using the vanilla Lloyd's algorithm as the initialization $z^{(0)}$ in Algorithm \ref{alg:1}, we have with probability at least $1- n^{-1} - \exp(-\snr)-\exp(-\Delta)$,
\begin{align*}
h(z^{(t)},z^*) \leq \ebr{-(1+o(1))\frac{\snr^2}{8}},\quad \text{for all }t\geq \log n.
\end{align*}
\end{corollary}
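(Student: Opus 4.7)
The plan is to combine two ingredients already available to us: Theorem \ref{thm:main1}, which handles all iterations $t\ge \log n$ once the initialization is good enough, and an off-the-shelf analysis of the vanilla Lloyd's algorithm that supplies such an initialization. The only substantive work is to verify that $\ell(z^{(0)},z^*) = o(n/k)$ holds with the probability advertised in the corollary.

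First, I would invoke a sub-Gaussian variant of the Lloyd's-algorithm guarantee in \cite{gao2019iterative}. Since $\epsilon_j\sim\mathn(0,\Sigma^*)$ is sub-Gaussian with variance proxy $\lambdamax=\lambda_d(\Sigma^*)$, and the proof of \cite{gao2019iterative} only invokes sub-Gaussian concentration rather than the literal identity covariance, re-running it with proxy $\sigma^2=\lambdamax$ shows that the vanilla Lloyd's output $\hat z$ satisfies $\ell(\hat z, z^*)\le n\,\ebr{-(1+o(1))\Delta^2/(8\lambdamax)}$ with probability at least $1-n^{-1}-\exp(-\Delta)$, under the signal condition $\Delta^2/(k^2\lambdamax)\to\infty$.

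Second, I would translate between $\snr$ and $\Delta$ using the Rayleigh-quotient bounds $\Delta^2/\lambdamax \le \snr^2 \le \Delta^2/\lambdamin$, obtained directly from the identity $\snr^2=\min_{a\neq b}(\theta_a^*-\theta_b^*)^T(\Sigma^*)^{-1}(\theta_a^*-\theta_b^*)$. Combined with the well-conditionedness assumption $\lambdamax/\lambdamin=O(1)$, this gives $\Delta^2/\lambdamax\asymp \snr^2$, so the hypothesis $\snr/k\to\infty$ is equivalent to $\Delta^2/(k^2\lambdamax)\to\infty$ and activates the initialization bound. Moreover $\snr/k\to\infty$ trivially implies $\snr^2/\log k\to\infty$, hence $n\,\ebr{-\Delta^2/(8\lambdamax)}=o(n/k)$, which is exactly the initialization quality required by Theorem \ref{thm:main1}.

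Finally, I would apply Theorem \ref{thm:main1} with $\eta=n^{-1}+\exp(-\Delta)$ and union-bound its failure probability $n^{-1}+\exp(-\snr)$ with the initialization failure event, producing a total failure probability of at most $2n^{-1}+\exp(-\Delta)+\exp(-\snr)$; collapsing the two copies of $n^{-1}$ yields the advertised bound $n^{-1}+\exp(-\snr)+\exp(-\Delta)$. There is no real obstacle, since the corollary is designed as a direct consequence of Theorem \ref{thm:main1}; the only place where care is needed is the first step, where one must confirm that the concentration inequalities driving the isotropic Lloyd's analysis of \cite{gao2019iterative} all extend verbatim to sub-Gaussian noise with proxy $\lambdamax$. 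Everything else is eigenvalue bookkeeping.
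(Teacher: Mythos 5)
Your proposal is correct and follows essentially the same route as the paper: invoke the sub-Gaussian extension of the vanilla Lloyd's guarantee from \cite{gao2019iterative} with variance proxy $\lambda_{\max}$ to get $\ell(z^{(0)},z^*)\leq n\exp(-(1+o(1))\Delta^2/(8\lambda_{\max}))=o(n/k)$ with probability $1-n^{-1}-\exp(-\Delta)$, note that $\Delta^2/\lambda_{\max}$ and $\snr^2$ are of the same order under the well-conditionedness assumption, and then feed this initialization into Theorem \ref{thm:main1} with $\eta=n^{-1}+\exp(-\Delta)$. Your eigenvalue bookkeeping and the union bound over failure events match the paper's (brief) argument, so there is nothing further to add.
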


\section{GMM with Unknown and Heterogeneous Covariance Matrices}\label{sec:hetero}

\subsection{Model}

In this section, we study the GMM with covariance matrices from each cluster unknown and not  necessarily equal to each other. The data generation process can be displayed as follow,
\begin{flalign}\label{eqn:model_II}
   \textbf{Model 2:} && Y_j  &= \theta^*_{z^*_j} + \epsilon_j, \text{ where }\epsilon_j \stackrel{ind}{\sim}  \mathn(0,\Sigma^*_{z^*_j}), \forall j\in[n].&
\end{flalign}
We call it  \emph{Model 2} throughout the paper to distinguish it from  Model 1 studied in Section \ref{sec:homo}. The  difference between (\ref{eqn:model_II}) and (\ref{eqn:model_I}) is that we now have $\{\Sigma^*_a\}_{a\in[k]}$ instead of a shared $\Sigma^*$. We consider  the same loss functions  as in (\ref{eqn:h}) and (\ref{eqn:l}).

\paragraph{Signal-to-noise Ratio.}
The signal-to-noise ratio for  Model 2 is defined as follows. We use the notation $\snr'$ to distinguish it from  $\snr$ for  Model 1. Compared to $\snr$, $\snr'$ is much more complicated and  does not have an explicit formula. We first define a space $\mathcal{B}_{a,b} \in \mathr^d$ for any $a,b\in [k]$ such that $a\neq b$:
\begin{align*}
\mathcal{B}_{a,b} = \Bigg\{x\in \mathr^{d}: &x^T \Sigma_a^{*\frac{1}{2}} \Sigma_b^{*-1}(
\theta^*_a -\theta^*_b) + \frac{1}{2} x^T\br{\Sigma_a^{*\frac{1}{2}}\Sigma_b^{*-1}\Sigma_a^{*\frac{1}{2}}-I_d}x  \\
&\leq -\frac{1}{2}(
\theta^*_a -\theta^*_b)^T\Sigma_b^{*-1}(
\theta^*_a -\theta^*_b) + \frac{1}{2}\log \abs{\Sigma_a^*}- \frac{1}{2}\log \abs{\Sigma_b^*} \Bigg\}.
\end{align*}
We then define $\snr'_{a,b} = 2\min_{x\in \mathcal{B}_{a,b}} \norm{x}$ and 
\begin{align}
\snr' = \min_{a,b\in[n]:a\neq b} \snr'_{a,b}.\label{eqn:SNR_II}
\end{align}

The from of $\snr'$ is closely connected to the testing error of the Quadratic Discriminant Analysis (QDA), which we will give in Lemma \ref{lem:qda}. For the  interpretation of the $\snr'$ (especially from a geometric point of view), we defer it after presenting   Lemma \ref{lem:qda}. Here let us consider a few special cases where we are able to simplify $\snr'$:
(1) When $\Sigma^*_a = \Sigma^*$ for all $a\in[k]$, by simple algebra, we have $\snr'_{a,b} =  \|(\theta^*_a - \theta^*_b)^T \Sigma^{*-\frac{1}{2}}\|$ for any $a,b\in[k]$ such that $a\neq b$. Hence, $\snr'=\snr$ and  Model 2 is reduced to the 
Model 1. (2) When $\Sigma^* = \sigma_a^2 I_d$ for any $a\in[k]$ where $\sigma_1,\ldots,\sigma_k>0$ are large constants, we have $\snr'_{a,b} ,\snr'_{b,a} $ both close to $2\|\theta^*_a - \theta^*_b\|/(\sigma_a + \sigma_b)$. From these examples, we can see $\snr'$ is determined by both the centers $\{\theta^*_a\}_{a\in[k]}$ and the covariance matrices $\{\Sigma^*_a\}_{a\in[k]}$.

\subsection{Minimax Lower Bound}\label{sec:3.2}

We first establish the minimax lower bound for the clustering problem under  Model 2.
\begin{theorem}\label{thm:lower2}
Under the assumption $\frac{\snr'}{\sqrt{\log k}} \rightarrow\infty$, we have 
\begin{align*}
\inf_{\hat z} \sup_{z^*\in [k]^n} \E h(z,z^*) \geq  \ebr{-(1+o(1))\frac{\snr^{'2}}{8}}.
\end{align*}
If $\snr'=O(1)$ instead, we have $\inf_{\hat z} \sup_{z^*\in[k]^n} \E h(z,z^*) \geq c$ for some constant $c>0$.
\end{theorem}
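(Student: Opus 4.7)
The strategy parallels the proof of Theorem \ref{thm:lower1}, with the Linear Discriminant Analysis (LDA) ingredient replaced by its Quadratic Discriminant Analysis (QDA) counterpart, which is precisely what the definition of $\snr'$ through the region $\mathcal{B}_{a,b}$ is tailored to. First, I would fix a pair $(a^\star,b^\star)\in\argmin_{a\neq b}\snr'_{a,b}$ achieving the minimum in (\ref{eqn:SNR_II}) and construct a hard sub-parameter space $T\subset[k]^n$ in which some reference label assignment is perturbed at each index $j$ by freely flipping $z^*_j$ between $a^\star$ and $b^\star$. Averaging $h(\hat z,z^*)$ over $z^*\in T$ reduces the global minimax problem to a sum of $n$ independent two-point testing problems of the form $\mathcal{N}(\theta^*_{a^\star},\Sigma^*_{a^\star})$ versus $\mathcal{N}(\theta^*_{b^\star},\Sigma^*_{b^\star})$, each based on a single observation $Y_j$. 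The permutation minimum in the definition of $h$ is handled exactly as in Theorem \ref{thm:lower1} by choosing the reference assignment so that flipping a single $z^*_j$ keeps the two candidate configurations in different permutation classes.

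Second, I would invoke the QDA lemma (Lemma \ref{lem:qda}) to lower-bound each such testing error by $\ebr{-(1+o(1))(\snr'_{a^\star,b^\star})^2/8}$. The computation underlying that lemma is as follows: by Neyman--Pearson, the optimal test is the likelihood-ratio test, which is a quadratic decision rule since the covariances differ. Under $H_0$, the change of variables $X=(\Sigma^*_{a^\star})^{-\frac{1}{2}}(Y_j-\theta^*_{a^\star})$ turns the observation into a standard Gaussian, and a direct algebraic manipulation identifies the event that the LRT favors $H_1$ with $\{X\in\mathcal{B}_{a^\star,b^\star}\}$. The Type I error is therefore the standard Gaussian measure of $\mathcal{B}_{a^\star,b^\star}$; since its minimum-norm point $x^\diamond$ satisfies $\|x^\diamond\|=\snr'_{a^\star,b^\star}/2$ by definition, a Gaussian tail estimate localized near $x^\diamond$ yields a lower bound of the form $\ebr{-(1+o(1))\|x^\diamond\|^2/2}$, and a symmetric argument with the roles of $a^\star$ and $b^\star$ swapped handles the Type II error. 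Aggregating across the $n$ coordinates then delivers the claimed exponential minimax bound. In the regime $\snr'=O(1)$, the same reduction produces a testing problem with constant error, giving the constant lower bound directly.

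The main obstacle will be the per-coordinate Gaussian measure estimate. The set $\mathcal{B}_{a^\star,b^\star}$ is carved out by a quadratic form with curvature matrix $\Sigma_{a^\star}^{*\frac{1}{2}}\Sigma_{b^\star}^{*-1}\Sigma_{a^\star}^{*\frac{1}{2}}-I_d$, so depending on the relative eigenstructure of the two covariances it can be an ellipsoid, a hyperboloid, or a paraboloid, and may well be unbounded. Showing that its standard Gaussian measure matches $\ebr{-\|x^\diamond\|^2/2}$ to leading exponential order therefore requires a Laplace/saddle-point expansion at $x^\diamond$ that is robust to the curvature of $\partial\mathcal{B}_{a^\star,b^\star}$ and ensures the dominant contribution remains in a neighborhood of $x^\diamond$ whose size does not inflate the exponent; this is the analytic heart of Lemma \ref{lem:qda} and is substantially more delicate than the half-space case in LDA. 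The hypothesis $\snr'/\sqrt{\log k}\to\infty$ should then be exactly enough to absorb a union-bound factor over the $k(k-1)$ cluster pairs without disturbing the $(1+o(1))/8$ constant in the exponent.
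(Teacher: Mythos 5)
Your proposal follows essentially the same route as the paper: the paper proves Theorem \ref{thm:lower2} by repeating verbatim the reduction used for Theorem \ref{thm:lower1} (restricting to a sub-parameter space where each free coordinate toggles between the two clusters attaining the minimum in (\ref{eqn:SNR_II}), which collapses the permutation issue and reduces the problem to $n$ two-point tests), and then substitutes Lemma \ref{lem:qda} for Lemma \ref{lem:lda}. You also correctly locate the real analytic work in lower-bounding the standard Gaussian measure of $\mathcal{B}_{a,b}$ near its minimum-norm point, which is exactly what the paper's Lemma \ref{lem:2} (via the localized-set/volume argument) supplies.
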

Despite that the statement of Theorem \ref{thm:lower2} looks similar to that of Theorem \ref{thm:lower1}, the two minimax lower bounds are different from each other due to the discrepancy in the dependence of the centers and the covariance matrices in  $\snr'$ and $\snr$. By the same argument as in Section \ref{sec:1_lower}, the minimax lower bound established in Theorem  \ref{thm:lower2} is closely related to the Quadratic Discriminant Analysis (QDA) between two normal distributions with different means and different covariance matrices.

\begin{lemma}[Testing Error for Quadratic Discriminant Analysis]\label{lem:qda}
Consider two hypotheses $\mathbb{H}_0: X\sim \mathn(\theta_1^*,\Sigma_1^*)$ and $\mathbb{H}_1: X \sim \mathn(\theta_2^*,\Sigma_2^*)$. Define a testing procedure $$\phi =\indic{\log \abs{\Sigma_1^*} + (x - \theta_1^*)^T\Sigma_1^* (x-\theta_1^*) \geq \log \abs{\Sigma_2^*} + (x - \theta_2^*)^T\Sigma_2^* (x-\theta_2^*) }.$$
Then we have
$
\inf_{\hat \phi} (\p_{\mathbb{H}_0} (\hat \phi = 1) +\p_{\mathbb{H}_1} (\hat \phi = 0))  =\p_{\mathbb{H}_0} \br{ \phi = 1} +\p_{\mathbb{H}_1} \br{ \phi = 0}.
$
If $\min\cbr{\snr'_{1,2},\snr'_{2,1}}\rightarrow\infty$, we have
\begin{align*}
\inf_{\hat \phi} (\p_{H_0} (\hat \phi = 1) +\p_{H_1} (\hat \phi = 0)) \geq \ebr{-(1+o(1)) \frac{\min\cbr{\snr'_{1,2},\snr'_{2,1}}^2}{8}}.
\end{align*}
Otherwise, $\inf_{\hat \phi} (\p_{H_0} (\hat \phi = 1) +\p_{H_1} (\hat \phi = 0)) \geq c$ for some constant $c>0$.
\end{lemma}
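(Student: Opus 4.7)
The equality $\inf_{\hat\phi}(\p_{\mathbb{H}_0}(\hat\phi=1)+\p_{\mathbb{H}_1}(\hat\phi=0))=\p_{\mathbb{H}_0}(\phi=1)+\p_{\mathbb{H}_1}(\phi=0)$ is an immediate consequence of the Neyman--Pearson lemma with equal weights, since the log-likelihood ratio of $\mathn(\theta_2^*,\Sigma_2^*)$ against $\mathn(\theta_1^*,\Sigma_1^*)$ reduces (up to additive constants) to comparing $\log|\Sigma_1^*|+(x-\theta_1^*)^\top\Sigma_1^{*-1}(x-\theta_1^*)$ with $\log|\Sigma_2^*|+(x-\theta_2^*)^\top\Sigma_2^{*-1}(x-\theta_2^*)$, so $\phi$ is exactly the Bayes-optimal test. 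For the lower bound, I would first rewrite each error as the Gaussian mass of a quadratic region. Under $\mathbb{H}_0$, substituting $X=\theta_1^*+\Sigma_1^{*1/2}Z$ with $Z\sim\mathn(0,I_d)$ into the inequality defining $\{\phi=1\}$, expanding each $(X-\theta_j^*)^\top\Sigma_j^{*-1}(X-\theta_j^*)$, and collecting terms in $Z$ reproduces exactly the defining inequality of $\mathcal{B}_{1,2}$. The symmetric computation under $\mathbb{H}_1$ gives $\{\phi=0\}=\{Z'\in\mathcal{B}_{2,1}\}$ with $Z'\sim\mathn(0,I_d)$, so the task reduces to lower bounding $\p(Z\in\mathcal{B}_{1,2})$ and $\p(Z'\in\mathcal{B}_{2,1})$.

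For the anti-concentration step, fix $(a,b)$ and let $x^*\in\mathcal{B}_{a,b}$ attain the minimum norm, so $\|x^*\|=\snr'_{a,b}/2$. A shift-of-measure $Z=x^*+W$ with $W\sim\mathn(0,I_d)$ gives
\begin{equation*}
\p(Z\in\mathcal{B}_{a,b})=e^{-\|x^*\|^2/2}\,\mathbb{E}_W\!\left[e^{-W^\top x^*}\indic{x^*+W\in\mathcal{B}_{a,b}}\right].
\end{equation*}
Restricting to the event $\|W\|\leq r_n$ for a slowly-growing $r_n=o(\|x^*\|)$ (for instance $r_n=\sqrt{\snr'_{a,b}}$) yields $e^{-W^\top x^*}\geq e^{-r_n\|x^*\|}=e^{-o(\|x^*\|^2)}$ on that event. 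First-order optimality at $x^*$ forces the outward normal of $\mathcal{B}_{a,b}$ at $x^*$ to be parallel to $x^*$, and smoothness of the defining quadratic makes $\mathcal{B}_{a,b}$ asymptotically a half-space through $x^*$ on the scale $r_n\ll\|x^*\|$, so $\p(\|W\|\leq r_n,\,x^*+W\in\mathcal{B}_{a,b})$ stays bounded below by a positive constant (close to $1/2$). Multiplying the two lower bounds gives $\p(Z\in\mathcal{B}_{a,b})\geq e^{-(1+o(1))(\snr'_{a,b})^2/8}$; summing the two contributions and bounding below by $\min\cbr{\snr'_{1,2},\snr'_{2,1}}$ yields the claim. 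In the bounded-SNR regime, the origin lies within bounded distance of $\mathcal{B}_{a,b}$ so its Gaussian mass is trivially bounded below by a universal positive constant.

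The main obstacle is this anti-concentration step. Unlike Lemma~\ref{lem:lda}, where $\mathcal{B}_{1,2}$ is a genuine half-space with an explicit one-dimensional Gaussian tail, the quadratic region $\mathcal{B}_{a,b}$ here need not even be convex---precisely when $\Sigma_a^{*1/2}\Sigma_b^{*-1}\Sigma_a^{*1/2}-I_d$ is indefinite---so neither a supporting-hyperplane argument nor a direct half-space computation applies. Preserving the sharp $(1+o(1))$ constant in the exponent therefore requires a careful choice of the truncation radius $r_n$: large enough that the local half-space approximation of $\partial\mathcal{B}_{a,b}$ at $x^*$ captures a constant fraction of the Gaussian mass around $x^*$, yet small enough that the quadratic curvature of the boundary and the term $W^\top x^*$ together contribute only $e^{o(\|x^*\|^2)}$, so that the exponential rate $(\snr'_{a,b})^2/8$ is preserved.
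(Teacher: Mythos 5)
Your overall architecture matches the paper's: Neyman--Pearson gives the stated equality, the change of variables $X=\theta_a^*+\Sigma_a^{*1/2}Z$ reduces each error probability to $\p(Z\in\mathcal{B}_{a,b})$ with $Z\sim\mathn(0,I_d)$, and everything then hinges on a sharp lower bound for the Gaussian mass of $\mathcal{B}_{a,b}$ near its minimum-norm point $x^*$. Your exponential-tilting identity is correct and is the measure-theoretic twin of the paper's bound $\p(\eta\in\mathcal{B}_{a,b})\geq\abs{T}\inf_{y\in T}p(y)$ for a small set $T$ attached to $x^*$ (the paper's Lemma \ref{lem:2}).

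The gap is in the anti-concentration step, which you correctly flag as ``the main obstacle'' but then resolve by assertion. Writing $q(x^*+W)=g^TW+\frac{1}{2}W^TAW$ with $g=\nabla q(x^*)$ and $A=\Sigma_a^{*1/2}\Sigma_b^{*-1}\Sigma_a^{*1/2}-I_d$, your half-space approximation is valid only when the linear term dominates the curvature on the relevant scale, i.e.\ only given a quantitative lower bound on $\norm{g}$. Nothing you invoke supplies one: Lagrange optimality gives $g=\mu x^*$ but says nothing about $\abs{\mu}$, and since $g=v+Ax^*$ with both $\norm{v}$ and $\norm{x^*}$ of order $\dab$ and $\norm{A}=O(1)$, substantial cancellation is a priori possible. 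If $\norm{g}=O(1)$ while $A$ has an eigenvalue bounded away from zero, the set $\{W:g^TW+\frac{1}{2}W^TAW\leq 0\}$ can have Gaussian mass far below any constant (in the extreme $g=0$, $A\succ 0$ it is a single point), so the claim that $\p(\|W\|\leq r_n,\,x^*+W\in\mathcal{B}_{a,b})$ is ``close to $1/2$'' does not follow from smoothness alone. Supplying the missing lower bound --- equivalently, exhibiting a direction in which a thin cone of volume $e^{-o(\dab^2)}$ stays inside $\mathcal{B}_{a,b}$ --- is precisely the content of the paper's Lemma \ref{lem:2}, which runs a four-way case analysis on the spectrum of $A$ and on the components of $v=U^T\Sigma_a^{*1/2}\Sigma_b^{*-1}\xiab$ along the small-eigenvalue directions; the hardest case (all such components small) uses the identity relating $\sum_{i>m}v_i^2/\lambda_i-\xiab^T\Sigma_b^{*-1}\xiab$ to $\xiab^T\Sigma_a^{*-1}\xiab$ to show the gradient restricted to the large-eigenvalue coordinates has norm at least of order $\sqrt{\delta'}\dab$ for a slowly vanishing $\delta'$. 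Without an argument of this kind the proof is incomplete; the remaining ingredients (the tilting computation, the choice $r_n=o(\norm{x^*})$, and the reduction to $\min\cbr{\snr'_{1,2},\snr'_{2,1}}$) are fine.
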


\begin{figure}[ht]
	\centering
		\includegraphics[width=0.45\textwidth, trim = 0 40 0 50, clip]{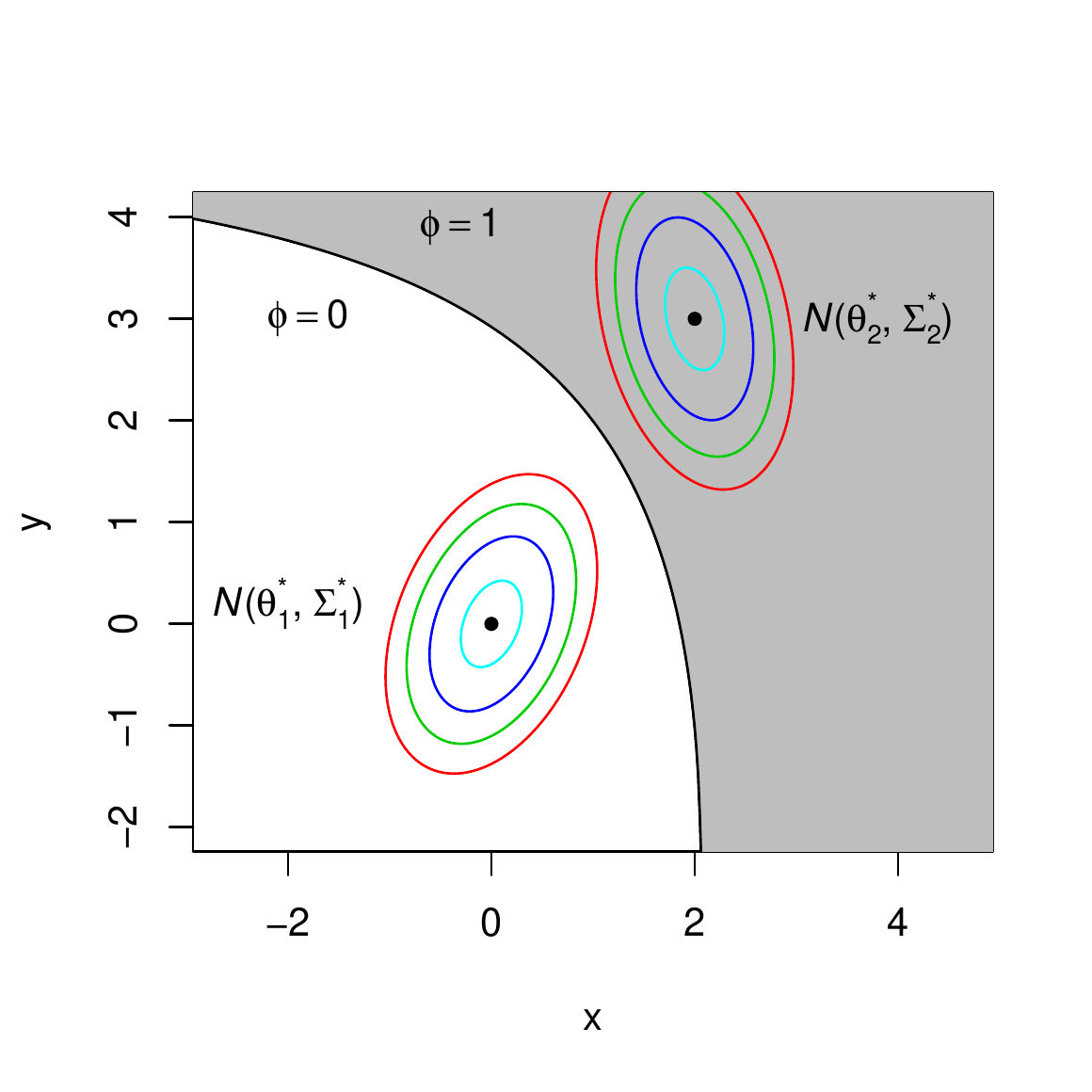}
		\includegraphics[width=0.45\textwidth, trim = 0 40 0 50, clip]{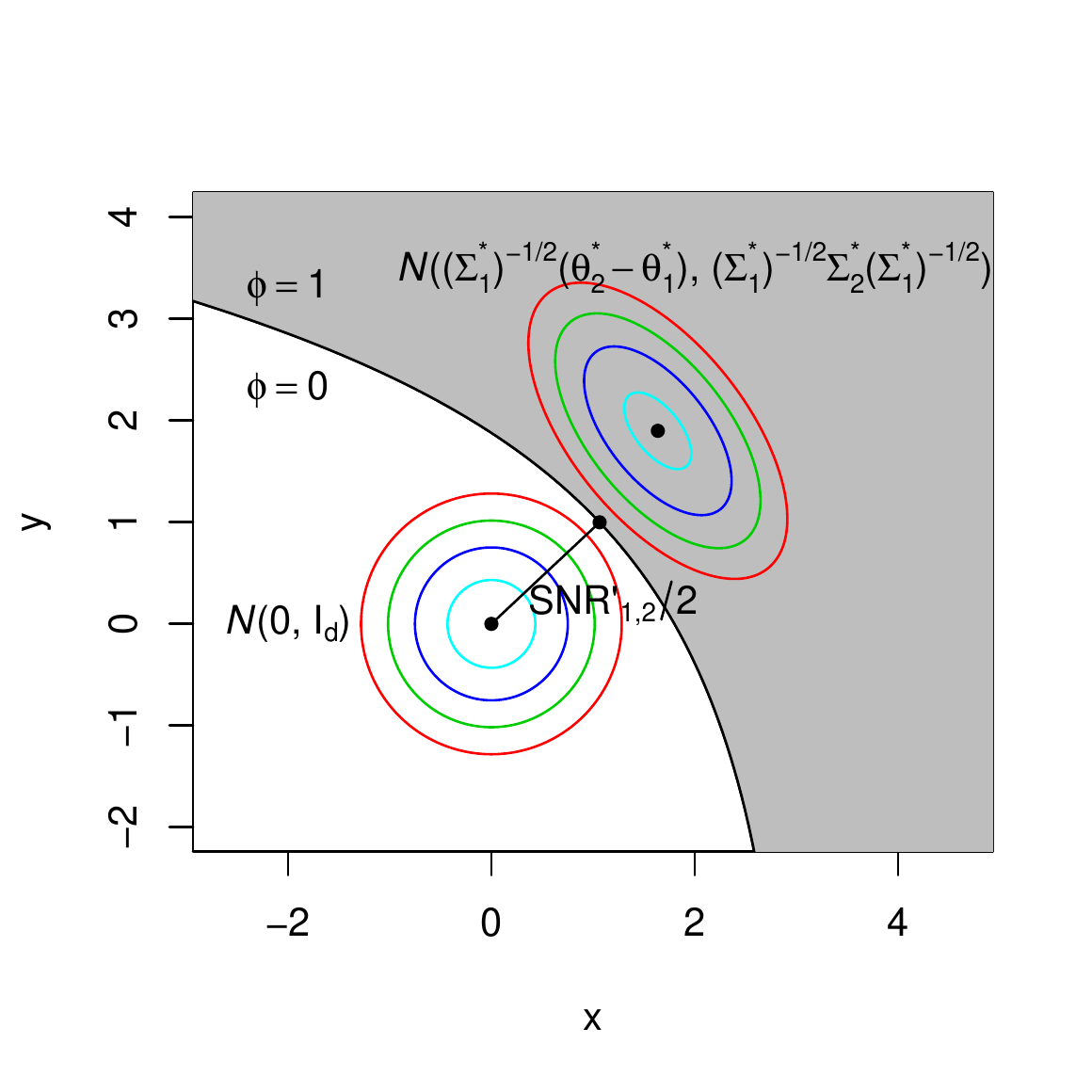}
		\caption{A geometric interpretation of  $\snr'$.  \label{fig:2}}
\end{figure}

From Lemma \ref{lem:qda}, we can have a geometric interpretation of $\snr'$. In the left panel of Figure \ref{fig:2}, we have two normal distributions $\mathn(\theta^*_1,\Sigma_1^*)$ and $\mathn(\theta^*_2,\Sigma_2^*)$ where $X$ can be generated from. The black curve represents the optimal testing procedure $\phi$ displayed in Lemma  \ref{lem:qda}. Since $\Sigma_1^*$ is not necessarily  equal to $\Sigma_2^*$, the black curve is not necessarily a straight line. If $\mathbb{H}_0$ is true, then the probability for $X$ to be incorrectly classified is when $X$ falls in the gray area, which is $\p_{H_0} (\hat \phi = 1)$. To calculate it, we can make a transformation $X'= (\Sigma^*_1)^{-\frac{1}{2}}(X - \theta^*_1)$. Then displayed in the right panel of Figure \ref{fig:2}, the two distributions become $\mathn(0,I_d)$ and $\mathn((\Sigma^*_1)^{-\frac{1}{2}}(\theta^*_2 - \theta^*_1),(\Sigma^*_1)^{-\frac{1}{2}} \Sigma^*_2 (\Sigma^*_1)^{-\frac{1}{2}})$, and the optimal testing procedure  $\indic{X'\in \mathcal{B}_{1,2}}$. As a result, in the right panel of Figure \ref{fig:2}, $\mathcal{B}_{1,2}$ represents the space colored by gray, and the black curve is its boundary. Then $\p_{H_0} (\hat \phi = 1)$ is equal to $\pbr{\mathn(0,I_d)\in \mathcal{B}_{1,2}}$, which can be shown to be determined by the minimum distance between the center of $\mathn(0,I_d)$ and the space $\mathcal{B}_{1,2}$. Denote the minimum distance by $\snr_{1,2}'/2$, by Lemmas \ref{lem:1} and Lemma \ref{lem:2}, we can show $\pbr{\mathn(0,I_d)\in \mathcal{B}_{1,2}} = \exp(-(1+o(1))\snr_{1,2}^{'2}/8)$. As a result, the $\snr'$ can be interpreted as the minimum effective distance among the centers $\{\theta^*_a\}_{a\in[k]}$ considering the anisotropic and heterogeneous structure of $\{\Sigma^*_a\}_{a\in[k]}$  and it captures the intrinsic difficulty of the clustering problem under  Model 2.

\subsection{Optimal Adaptive Procedure}

In this section, we will propose a computationally feasible and rate-optimal procedure for clustering under  Model 2. Similar to  Algorithm \ref{alg:1}, the proposed Algorithm \ref{alg:2} can be seen as a variant of the Lloyd's algorithm that is adjusted to the unknown and heterogeneous covariance matrices. It can also be interpreted as a hard EM algorithm under  Model 2. Algorithm \ref{alg:2}  differs from Algorithm \ref{alg:1} in (\ref{eqn:2_Sigma}) and (\ref{eqn:2_z}),  as now there are $k$ covariance matrices to be estimated. 

\begin{algorithm}[ht]
\SetAlgoLined
\KwIn{Data $Y$, number of clusters $k$, an initialization $z^{(0)}$, number of iterations $T$}
\KwOut{$z^{(T)}$}
\For{ $t=1,\ldots,T$}{
 
 Update the centers:
 \begin{align}\label{eqn:2_theta}
 \theta_a^{(t)} = \frac{\sum_{j\in[n]} Y_j\indic{z^{(t-1)}_j=a}}{\sum_{j\in[n]}\indic{z^{(t-1)}_j=a}},\quad \forall a\in[k]. \;
 \end{align}

Update the covariance matrices:
\begin{align}\label{eqn:2_Sigma}
 \Sigma_a^{(t)}  =  \frac{\sum_{j\in[n]} (Y_j -   \theta_a^{(t)}) (Y_j -   \theta_a^{(t)})^T \indic{z^{(t-1)}_j=a}}{\sum_{j\in[n]}\indic{z^{(t-1)}_j=a}},\quad \forall a\in[k]. \;
\end{align}

Update the cluster estimations:
\begin{align}\label{eqn:2_z}
z^{(t)}_j = \argmin_{a\in[k]} (Y_j -  \theta_a^{(t)} )^T (\Sigma^{(t)}_a)^{-1}(Y_j -  \theta_a^{(t)} ) + \log |\Sigma^{(t)}_a|,\quad j\in[n].
\end{align}
}
\caption{Adjusted Lloyd's Algorithm for Model 2 (\ref{eqn:model_II}). \label{alg:2}}
\end{algorithm}

In Theorem \ref{thm:main2}, we give a computational and statistical guarantee of the proposed Algorithm \ref{alg:2}. We show that provided with some decent initialization, Algorithm \ref{alg:2} is able to achieve the minimax lower bound within $\log n$ iterations. 
The  assumptions needed in Theorem \ref{thm:main2} are similar to those in Theorem \ref{thm:main2}, except that we require stronger assumptions on $k$ and the dimensionality $d$ since now we have $k$ (instead of one) covariance matrices to be estimated. In addition, by $\max_{a,b\in[k]}\lambda_{d}(\Sigma^*_a) / \lambda_{1}(\Sigma^*_b)=O(1)$  we not only assume each of the $k$ covariance matrices is well-conditioned, but also assume they are comparable to each other.

\begin{theorem}\label{thm:main2}
Assume $k,d=O(1)$ and $\min_{a\in k}\sum_{j=1}^n\mathbb{I}\{z^*_j = a\}\geq  \frac{\alpha n}{k}$ for some constant $\alpha>0$. Assume $\snr'\rightarrow\infty$ and $\max_{a,b\in[k]}\lambda_{d}(\Sigma^*_a) / \lambda_{1}(\Sigma^*_b)=O(1)$.
For Algorithm \ref{alg:2} , suppose $z^{(0)}$ satisfies $\ell(z^{(0)},z^*) = o(n/k)$ with probability at least $1-\eta$. Then with probability at least $1-\eta - n^{-1} - \exp(-\snr')$, we have
\begin{align*}
h(z^{(t)},z^*) \leq \ebr{-(1+o(1))\frac{\snr^{'2}}{8}},\quad \text{for all }t\geq \log n.
\end{align*}
\end{theorem}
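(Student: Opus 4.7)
The plan is to mirror the strategy that presumably drives the proof of Theorem \ref{thm:main1}, but with the added complications that (i) there are now $k$ distinct covariance matrices to estimate, and (ii) the per-point decision rule (\ref{eqn:2_z}) is quadratic in $Y_j$, so the analysis invokes Lemma \ref{lem:qda} (QDA) instead of Lemma \ref{lem:lda} (LDA). The overall scheme is an induction on $t$: I will maintain an invariant that $\ell(z^{(t-1)}, z^*)$ is small enough that the M-step estimators $\theta_a^{(t)}, \Sigma_a^{(t)}$ concentrate around $\theta^*_a, \Sigma^*_a$, then show that the E-step produces $z^{(t)}$ satisfying a one-step contraction
\begin{align*}
\ell(z^{(t)}, z^*) \;\leq\; \eta\, \ell(z^{(t-1)}, z^*) \;+\; n\,\ebr{-(1+o(1))\frac{\snr'^{\,2}}{8}},
\end{align*}
for some $\eta = o(1)$. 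Iterating this inequality $\log n$ times sends the first term below the floor set by the second, and the final bound on $h(z^{(t)}, z^*)$ then follows from $h \leq \ell/(n\Delta^2)$ together with $\Delta^2 \gtrsim \snr'^{\,2}$, which is a consequence of $\max_{a,b}\lambda_d(\Sigma^*_a)/\lambda_1(\Sigma^*_b) = O(1)$.

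The parameter-concentration step splits each empirical sum in (\ref{eqn:2_theta})–(\ref{eqn:2_Sigma}) into a contribution from correctly-labeled points (on which $z^{(t-1)}_j = z^*_j$) and a contribution from mislabeled points. The former is an average of i.i.d.\ Gaussians from the correct cluster and concentrates by standard vector and matrix Bernstein / Vershynin-type bounds, producing errors of order $\sqrt{d/n}$ that are $o(\snr'^{\,-1})$ under $k,d=O(1)$ and $\snr'\to\infty$. The latter is controlled in terms of $\ell(z^{(t-1)},z^*)$ using Cauchy–Schwarz. The cluster-balance assumption $\min_a \sumzjstareqa \geq \alpha n/k$ keeps denominators $\sum_j \indic{z^{(t-1)}_j=a}$ of order $n/k$, and Weyl's inequality combined with the spectral condition on $\Sigma^*_a$ guarantees that each $\Sigma_a^{(t)}$ is well-conditioned along the iterates, which in turn implies $\|(\Sigma_a^{(t)})^{-1} - (\Sigma_a^*)^{-1}\|_{\rm op} = o(1)$ and $|\log|\Sigma_a^{(t)}| - \log|\Sigma_a^*|| = o(1)$.

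The contraction step is the heart of the argument. For each index $j$ with $z^*_j = a$, the event $\{z^{(t)}_j = b\}$ means that, after plugging in $Y_j = \theta^*_a + \epsilon_j$ with $\epsilon_j \sim \mathn(0,\Sigma^*_a)$, the empirical QDA score satisfies
\begin{align*}
\iprod{\epsilon_j}{(\Sigma_b^*)^{-1}\epsilon_j} - \iprod{\epsilon_j}{(\Sigma_a^*)^{-1}\epsilon_j} - 2\iprod{\epsilon_j}{(\Sigma_b^*)^{-1}(\theta^*_a - \theta^*_b)} + C_{a,b} + R_{j,a,b}^{(t)} \;\leq\; 0,
\end{align*}
where $C_{a,b}$ is the deterministic QDA offset whose geometry defines $\mathcal{B}_{a,b}$, and $R_{j,a,b}^{(t)}$ collects all perturbation terms coming from $\theta_a^{(t)} - \theta^*_a$, $\Sigma_a^{(t)} - \Sigma^*_a$, and the corresponding differences for $b$. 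Using Lemma \ref{lem:qda} (more precisely, Lemmas \ref{lem:1}–\ref{lem:2} invoked to analyze $\p(\mathn(0,I_d)\in\mathcal{B}_{a,b})$), the probability of this event in the ideal case $R_{j,a,b}^{(t)} \equiv 0$ is $\ebr{-(1+o(1))\snr'^{\,2}_{a,b}/8}$. The perturbation $R_{j,a,b}^{(t)}$ is a bilinear form in $\epsilon_j$ whose coefficients are controlled by the previous paragraph, and a careful Cauchy–Schwarz plus chaining argument shows it is of lower order than the signal except on a set whose $\ell$-weighted measure is bounded by $\eta\,\ell(z^{(t-1)}, z^*)$. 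Summing over $j$ with weights $\|\theta^*_a - \theta^*_b\|^2$ produces the contraction above, and a union bound over the $k^2$ pairs $(a,b)$ is absorbed into the $1+o(1)$ factor because $\snr'/\sqrt{\log k}\to\infty$.

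The main obstacle is controlling $R_{j,a,b}^{(t)}$ sharply enough that it does not degrade the constant $1/8$ in the exponent. In particular, the quadratic term $\epsilon_j^\top[(\Sigma_b^{(t)})^{-1} - (\Sigma_b^*)^{-1}]\epsilon_j$ scales with $d$ times the operator-norm error on $\Sigma_b^{(t)}$, and the log-determinant perturbation $\log|\Sigma_b^{(t)}| - \log|\Sigma_b^*|$ contributes an additional additive error of the same order; together these explain why the theorem requires $k,d = O(1)$ rather than the weaker $kd = O(\sqrt{n})$ condition of Model 1. A secondary delicate point is that, because the QDA boundary is curved, the Lemma \ref{lem:qda} analysis must be uniform in the point $x^* \in \mathcal{B}_{a,b}$ closest to the origin—this is where I would lean on the geometric characterization of $\snr'_{a,b}$ developed after Lemma \ref{lem:qda} to reduce the tail calculation to a one-dimensional Gaussian computation along the normal direction at $x^*$.
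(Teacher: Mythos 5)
Your proposal follows essentially the same route as the paper's proof: a one-step error decomposition of the empirical QDA score into the ideal term plus perturbations from $\hat\theta_a-\theta^*_a$, $\hat\Sigma_a-\Sigma^*_a$, and the log-determinants (the paper's $F_j,Q_j,G_j,H_j,K_j,L_j$), concentration of the M-step estimators using the cluster-balance and spectral conditions, a geometric contraction of $\ell(z^{(t)},z^*)$ iterated $\log n$ times, the ideal error bounded via the stability of the QDA region $\mathcal{B}_{a,b}$ under $o(1)$ perturbations of its boundary, and the final conversion to $h$ through $h\leq \ell/(n\Delta^2)$ with $\Delta\asymp\snr'$. The only cosmetic differences are that the paper uses a fixed contraction factor $1/4$ rather than $o(1)$, and bounds the ideal tail by a $\chi^2_d$ ball-complement estimate rather than a one-dimensional normal-direction computation (which matters since $\mathcal{B}_{a,b}$ need not be convex), but neither changes the argument.
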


The vanilla Lloyd's algorithm can be used as the initialization for Algorithm \ref{alg:2}. Under the assumption that $\lambda_{d}(\Sigma^*) / \lambda_{1}(\Sigma^*)=O(1)$,  Model 2 is also a sub-Gaussian mixture model. By the same argument as in Section  \ref{sec:model1_alg} we  have the following corollary.

\begin{corollary}\label{cor:2}
Assume $k,d=O(1)$ and $\min_{a\in k}\sum_{j=1}^n\mathbb{I}\{z^*_j = a\}\geq  \frac{\alpha n}{k}$ for some constant $\alpha>0$. Assume $\snr'\rightarrow\infty$ and $\lambda_{d}(\Sigma^*) / \lambda_{1}(\Sigma^*)=O(1)$.
Using the vanilla Lloyd's algorithm as the initialization $z^{(0)}$ in Algorithm \ref{alg:2}, we have with probability at least $1- n^{-1} - \exp(-\snr)$,
\begin{align*}
h(z^{(t)},z^*) \leq \ebr{-(1+o(1))\frac{\snr^{'2}}{8}},\quad \text{for all }t\geq \log n.
\end{align*}
\end{corollary}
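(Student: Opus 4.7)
The plan is to invoke Theorem \ref{thm:main2} directly, so I only need to verify that the vanilla Lloyd's algorithm provides an initialization $z^{(0)}$ with $\ell(z^{(0)},z^*)=o(n/k)$ with probability at least $1-\exp(-\Delta)-n^{-1}$. Mirroring the argument given after Theorem \ref{thm:main1} for Corollary \ref{cor:1}, the first step is to observe that under the assumption $\lambda_d(\Sigma^*_a)/\lambda_1(\Sigma^*_b)=O(1)$ uniformly over $a,b\in[k]$ (which is the heterogeneous analogue of the stated condition, and the intended reading given the Model 2 context), each $\epsilon_j\sim\mathcal{N}(0,\Sigma^*_{z^*_j})$ is sub-Gaussian with a common variance proxy $\lambdamax:=\max_a\lambda_d(\Sigma^*_a)$. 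Hence Model 2 falls within the sub-Gaussian mixture framework for which \cite{gao2019iterative} is applicable with nearly identical proofs.

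Second, I would quote the sub-Gaussian version of the vanilla Lloyd guarantee: provided $\snr'/k\to\infty$ (equivalently $\snr'\to\infty$ since $k=O(1)$ here), its output $\hat z$ satisfies
\begin{align*}
\ell(\hat z,z^*)\leq n\exp\!\left(-(1+o(1))\Delta^2/(8\lambdamax)\right)
\end{align*}
with probability at least $1-\exp(-\Delta)-n^{-1}$. To turn this into $\ell(\hat z,z^*)=o(n/k)$, I need $\Delta^2/\lambdamax\to\infty$, and since $k=O(1)$ this bound already gives $o(n)$ once $\Delta^2/\lambdamax\to\infty$. The key comparison is that $\snr^{'2}$ and $\Delta^2/\lambdamax$ are of the same order under the well-conditioned assumption on the $\{\Sigma^*_a\}$. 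Concretely, from the definition of $\mathcal{B}_{a,b}$, if the eigenvalues of every $\Sigma^*_a$ lie in a bounded interval $[\lambdamin,\lambdamax]$, then the minimum-norm point in $\mathcal{B}_{a,b}$ is within multiplicative constants of $\|\theta^*_a-\theta^*_b\|/\sqrt{\lambdamax}$: the linear term $x^T\Sigma_a^{*1/2}\Sigma_b^{*-1}(\theta^*_a-\theta^*_b)$ dominates the quadratic remainder in that regime, yielding $\snr'_{a,b}\asymp \|\theta^*_a-\theta^*_b\|/\sqrt{\lambdamax}$. Consequently $\snr'\to\infty$ forces $\Delta/\sqrt{\lambdamax}\to\infty$, and the required initialization condition holds.

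Third and finally, Theorem \ref{thm:main2} applied with $\eta=\exp(-\Delta)+n^{-1}$ yields, on an event of probability at least $1-n^{-1}-\exp(-\snr')-\exp(-\Delta)-n^{-1}$, the bound $h(z^{(t)},z^*)\leq \exp(-(1+o(1))\snr^{'2}/8)$ for every $t\geq\log n$. Since $\Delta/\sqrt{\lambdamax}\asymp \snr'$ and $\lambdamax=O(1)$ (because $k,d$ are constants and the ratio of eigenvalues is $O(1)$), one can absorb $\exp(-\Delta)$ into $\exp(-\snr')$ up to a constant in the exponent, recovering the stated probability $1-n^{-1}-\exp(-\snr)$ in the corollary (noting that in the constant-$k$, $d$ regime all these exponential tails are of the same qualitative order).

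The main obstacle is the order-comparison between $\snr^{'2}$ and $\Delta^2/\lambdamax$: unlike Model 1 where $\snr^2$ admits the closed form $\|(\theta^*_a-\theta^*_b)^T\Sigma^{*-1/2}\|^2$ that is sandwiched trivially between $\Delta^2/\lambdamax$ and $\Delta^2/\lambdamin$, the Model 2 quantity $\snr'$ is defined implicitly as a minimum over the quadratic region $\mathcal{B}_{a,b}$, so one must argue (as in the special case $\Sigma^*_a=\sigma_a^2 I_d$ worked out in the paper) that under bounded conditioning the quadratic term in the defining inequality is a lower-order perturbation of the linear term at the minimizer. Once this comparison is established the rest of the corollary is a straightforward combination of the vanilla Lloyd bound with Theorem \ref{thm:main2}.
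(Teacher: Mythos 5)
Your proposal is correct and follows essentially the same route as the paper: under the eigenvalue condition Model 2 is a sub-Gaussian mixture, so the vanilla Lloyd guarantee of \cite{gao2019iterative} supplies an initialization with $\ell(z^{(0)},z^*)=o(n/k)$ with probability at least $1-\exp(-\Delta)-n^{-1}$, after which Theorem \ref{thm:main2} applies and the tail $\exp(-\Delta)$ is absorbed since $\Delta\asymp\snr'$. The order-comparison $\snr'_{a,b}\asymp\norm{\theta_a^*-\theta_b^*}$ that you flag as the main obstacle is exactly what the paper establishes in Lemma \ref{lem:sameorder} (via a case analysis on the spectrum of $\Sigma_a^{*1/2}\Sigma_b^{*-1}\Sigma_a^{*1/2}-I_d$ rather than your informal domination argument), so that step is already available and your argument goes through.
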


\section{Numerical Studies}\label{sec:numeric}
In this section, we compare the performance of the proposed methods with other popular clustering methods on synthetic datasets under different settings.


\paragraph{Model 1.}
The first simulation is designed for the GMM with unknown but homogeneous covariance matrices (i.e., Model 1). We independently generate $n=1200$ samples with dimension $d=50$ from $k=30$ clusters. Each cluster has 40 samples. We set $\Sigmastar = U^T\Lambda U$, where $\Lambda$ is a $50\times 50$ diagonal matrix with diagonal elements selected from 0.5 to 8 with equal space and $U$ is a randomly generated orthogonal matrix. The centers $\{\theta^*_a\}_{a\in[n]}$ are orthogonal or each other with $\norm{\theta^*_1} =\ldots = \norm{\theta^*_{30}} =9$. We consider four popular clustering methods: (1) the spectral clustering method \cite{loffler2019optimality} (denoted as ``spectral''), (2) the vanilla Lloyd's algorithm \cite{lu2016statistical} (denoted as ``vanilla Lloyd''), (3) the proposed Algorithm \ref{alg:1} initialized by the spectral clustering (denoted as ``spectral + Alg 1''), and (4) Algorithm \ref{alg:1} initialized by the vanilla Lloyd (denoted as ``vanilla Lloyd + Alg 1''). The comparison is presented in the left panel of Figure \ref{fig:sim}.

In the plot, the $x$-axis is the number of iterations and the $y$-axis is the logarithm of the mis-clustering error rate, i.e., $\log (h)$. Each of the curves plotted is an average of 100 independent trials. We can see the proposed Algorithm \ref{alg:1} outperforms the spectral clustering and the vanilla Lloyd's algorithm significantly. What is more, the dashed line represents the optimal exponent $-\snr^2/8$ of the minimax lower bound given in Section \ref{thm:lower1}. Then we can see Algorithm 1 achieves it after 3 iterations. This justifies the conclusion established in Theorem \ref{thm:main1} that Algorithm 1 is rate-optimal.

\paragraph{Model 2.}
We also compare the performances of  four methods (spectral, vanilla Lloyd, spectral + Alg 2, and vanilla Lloyd + Alg 2)  for  the GMM with unknown and heterogeneous covariance matrices (i.e., Model 2).
 In this case, we take $n=1200$, $k=3$ and $d =5$. We set $\Sigma_1^* = I$, $\Sigma_2^* = \Lambda_2$ which is a $5 \times5$ diagonal matrix with elements generated from 0.5 to 8 with equal space and $\Sigma_3^* = U^T\Lambda_3U$, where $\Lambda_3$ is a diagonal matrix with elements selected uniformly from 0.5 to 2 and $U$ is a randomly generated orthogonal matrix. To simplify the calculation of $\snr'$, we take $\theta_1^*$ as a randomly selected unit vector, $\theta_2^* = \theta_1^* + 5e_1$ with $e_1$ denoting the vector with a 1 in the first coordinate and 0's elsewhere and $\theta_3^* = \theta_2^* + v_1$ with $v_1$ randomly selected satisfying $\|v_1\|=10$. The comparison is presented in the right panel of Figure \ref{fig:sim} where each  curve plotted is an average of 100 independent trials.

From  the plot, we can clearly see the  proposed Algorithm \ref{alg:2} improves greatly the spectral clustering and the vanilla Lloyd algorithm. The dashed line represents the optimal exponent $-\snr'^2/8$ of the minimax lower bound given in Section \ref{thm:lower2}, which is achieved  by Algorithm \ref{alg:2}. Hence, this numerically justifies Theorem \ref{thm:main2} that Algorithm 1 is rate-optimal for clustering under  Model 2.

\begin{figure}
\centering
\includegraphics[scale= 0.5]{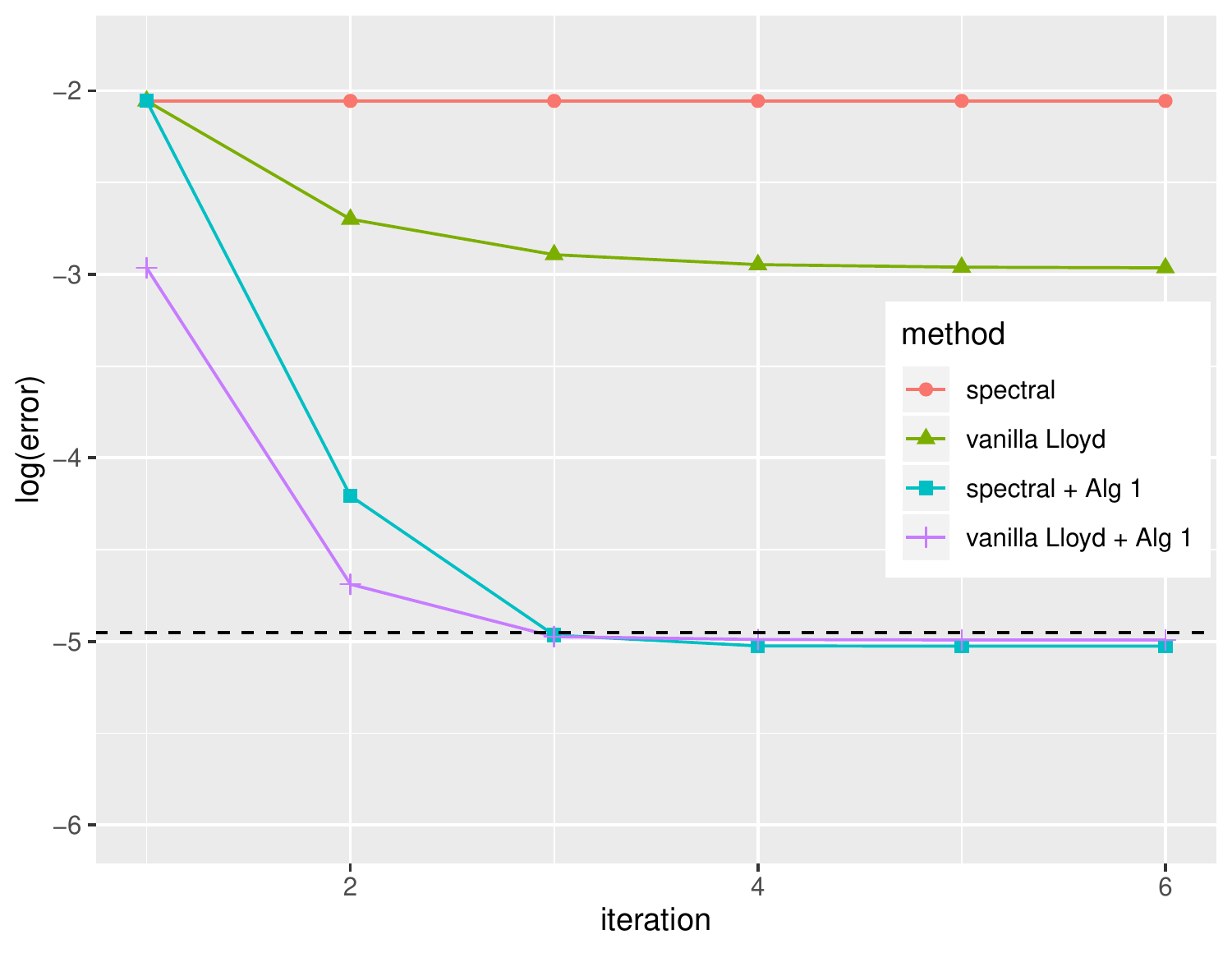}
\includegraphics[scale= 0.5]{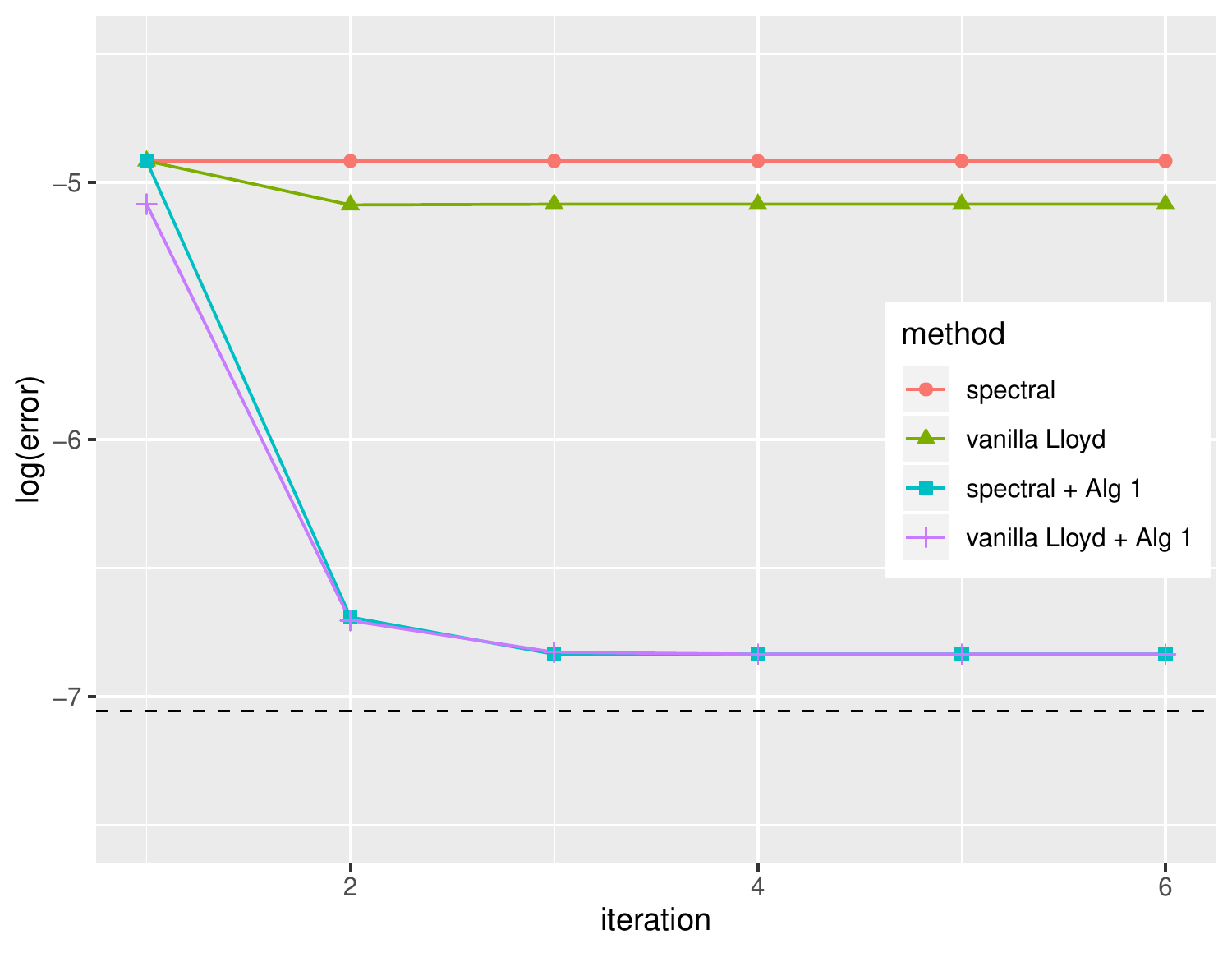}
\caption{Left: Performance of  Algorithm \ref{alg:1} compared with other methods under  Model 1. Right: Performance of  Algorithm \ref{alg:2} compared with other methods under  Model 2.}
        \label{fig:sim}
\end{figure}


%
%
%
%


\section{Proofs in Section \ref{sec:homo}} \label{subsec:proof}

\subsection{Proofs for The Lower Bound}
\begin{proof}[Proof of Lemma \ref{lem:lda}]
Note that $\phi$ is the likelihood ratio test. Hence by the Neyman-Pearson lemma, it is the optimal procedure. Let $\epsilon\sim\mathn(0,I_d)$.
By Gaussian tail probability, we have
\begin{align*}
\p_{\mathbb{H}_0} \br{ \phi = 1} +\p_{\mathbb{H}_1} \br{ \phi = 0} &= \pbr{2(\theta_2^*  - \theta_1^*)^T (\Sigma^*)^{-1}(\theta_1  + \epsilon) \geq \theta_2^{*T} (\Sigma^*)^{-1}\theta_2^* - \theta_1^{*T} (\Sigma^*)^{-1} \theta_1^*} \\
& \quad +  \pbr{2(\theta_2^*  - \theta_1^*)^T (\Sigma^*)^{-1}(\theta_2  + \epsilon) < \theta_2^{*T} (\Sigma^*)^{-1}\theta_2^* - \theta_1^{*T} (\Sigma^*)^{-1} \theta_1^*} \\
& = 2\pbr{2(\theta_2^*  - \theta_1^*)^T (\Sigma^*)^{-1}(\theta_1  + \epsilon) \geq \theta_2^{*T} (\Sigma^*)^{-1}\theta_2^* - \theta_1^{*T} (\Sigma^*)^{-1} \theta_1^*}\\
& =2 \pbr{\epsilon > \frac{1}{2} \| (\theta_2^*-\theta_1^*)^T (\Sigma^*)^{-\frac{1}{2}}\|}\\
& \geq C \min\cbr{1, \frac{1}{\| (\theta_2^*-\theta_1^*)^T (\Sigma^*)^{-\frac{1}{2}}\|}\ebr{ - \frac{\| (\theta_2^*-\theta_1^*)^T (\Sigma^*)^{-\frac{1}{2}}\|^2}{8}}},
\end{align*}
for some constant $C>0$. The proof is complete.
\end{proof}

\begin{proof}[Proof of Theorem \ref{thm:lower1}]
We adopt the idea from \cite{lu2016statistical}. Without loss of generality, assume the minimum in (\ref{eqn:SNR_I}) is achieved at $a=1,b=2$ so that  $\snr = (\theta_1^*-\theta_2^*)^T(\Sigmastar)^{-1}(\theta_1^*-\theta_2^*)$. 
Consider an arbitrary $\bar z\in[k]^n$ such that $|\{i\in[n]:\bar z_i = a\}| \geq \lceil \frac{n}{k} - \frac{ n}{8k^2}\rceil$ for any $a\in[k]$. Then for each $a\in[k]$, we can choose a subset of $\{i\in[n]:\bar z_i = a\}$ with 	cardinality $ \lceil \frac{n}{k} - \frac{ n}{8k^2}\rceil$, denoted by $T_a$. Let $T = \cup_{a\in [k]} T_a$. Then we can define a parameter space
\begin{align*}
\mathcal{Z} = \cbr{z\in[k]^n:z_i = \bar z_i \text{ for all }i\in T \text{ and }z_i \in\{1,2\} \text{ if }i\in T^c}.
\end{align*}
Notice that for any $z\neq \tilde{z} \in \mathcal{Z}$, we have $\frac{1}{n}\sum_{i=1}^{n}\mathbb{I}\{z_i\neq\tilde{z}_i\} \leq \frac{k}{n}\frac{n}{8k^2} = \frac{1}{8k}$ and $\frac{1}{n}\sum_{i=1}^{n}\mathbb{I}\{\psi(z_i)\neq \tilde{z}_i\} \geq \frac{1}{n}(\frac{n}{2k} - \frac{n}{8k^2})\geq \frac{1}{4k}$ for any permutation $\psi$ on $[k]$. Thus we can conclude 
\begin{align*}
h(z,\tilde{z}) = \frac{1}{n}\sum_{i=1}^{n}\mathbb{I}\{z_i\neq\tilde{z}_i\},\quad \text{for all }z,\tilde{z}\in\mathcal{Z}.
\end{align*}
We notice that
\begin{align*}
\inf_{\hat{z}}\sup_{z^*\in [k]^n}\E h(\hat{z},z^*)\geq~& \inf_{\hat{z}}\sup_{z^*\in \mathcal{Z}}\E h(\hat{z}, z^*)\\
\geq~& \inf_{\hat{z}}\frac{1}{\abs{\mathcal{Z}}}\sum_{z^* \in \mathcal{Z}}\E h(\hat{z},z^*)\\
\geq~ & \frac{1}{n}\sum_{i\in T^c}\inf_{\hat{z}_i}\frac{1}{\abs{\mathcal{Z}}}\sum_{z^* \in \mathcal{Z}}\p_{z^*}(\hat{z}_i \neq z_i).
\end{align*}
Now consider a fixed $i\in T^c$.
Define $\mathcal{Z}_a = \{z\in \mathcal{Z}:z_i=a\}$ for $a=1,2$. Then we can see $\mathcal{Z} = \mathcal{Z}_1 \cup \mathcal{Z}_2$ and $\mathcal{Z}_1\cap \mathcal{Z}_2 = \emptyset$. What is more, there exists a one-to-one mapping $f(\cdot)$ between $\mathcal{Z}_1$ and  $\mathcal{Z}_2$, such that for any $z\in \mathcal{Z}_1$, we have $f(z)\in \mathcal{Z}_2$ with $[f(z)]_j = z_j$ for any $j\neq i$ and $[f(z)]_i = 2$. Hence, we can reduce the problem into a two-point testing probe and then apply Lemma \ref{lem:lda}. We first  consider the case that $\snr \rightarrow\infty$. We have
\begin{align*}
\inf_{\hat{z}_i}\frac{1}{\abs{\mathcal{Z}}}\sum_{z^* \in \mathcal{Z}}\p_{z^*}(\hat{z}_i \neq z_i) & = \inf_{\hat{z}_i}\frac{1}{\abs{\mathcal{Z}}}\sum_{z^* \in \mathcal{Z}_1} \br{ \p_{z^*}(\hat{z}_i \neq 1) + \p_{f(z^*)}(\hat{z}_i \neq 2)} \\
&\geq \frac{1}{\abs{\mathcal{Z}}}\sum_{z^* \in \mathcal{Z}_1}  \inf_{\hat{z}_i}\br{ \p_{z^*}(\hat{z}_i \neq 1) + \p_{f(z^*)}(\hat{z}_i \neq 2)} \\
&\geq   \frac{\abs{\mathcal{Z}_1}}{\mathcal{Z}}  \ebr{-(1+\eta) \frac{\snr^2}{8}} \\
&\geq \frac{1}{2}\ebr{-(1+\eta) \frac{\snr^2}{8}},
\end{align*}
for some $\eta=o(1)$. Here the second inequality is due to Lemma \ref{lem:lda}. Then,
\begin{align*}
\inf_{\hat{z}}\sup_{z^*\in [k]^n}\E h(\hat{z},z^*) &\geq \frac{\abs{T^c}}{2n}\ebr{-(1+\eta) \frac{\snr^2}{8}} = \frac{1}{16k }\ebr{-(1+\eta) \frac{\snr^2}{8}}  \\
& = \ebr{-(1+\eta') \frac{\snr^2}{8}},
\end{align*}
for some other $\eta'=o(1)$, where we use  $\snr^2/\log k\rightarrow\infty$.

The proof for the case $\snr=O(1)$ is similar and hence is omitted here.
\end{proof}

\subsection{Proofs for The Upper Bound}\label{sec:sec2upper}
In this section, we will prove Theorem \ref{thm:main1} using the framework developed in \cite{gao2019iterative} for analyzing iterative algorithms. The key idea to establish statistical guarantees of the proposed iterative algorithm (i.e., Algorithm \ref{alg:1}) is to perform a ``one-step'' analysis. That is, assume we have an estimation $z$ for $z^*$. Then we can apply (\ref{eqn:1_theta}), (\ref{eqn:1_Sigma}), and (\ref{eqn:1_z}) on $z$ to obtain $\{\hat\theta_a(z)\}_{a\in[k]}$, $\hat\Sigma(z)$, and $\hat z(z)$ sequentially, which all depend on $z$. Then $\hat z(z)$ can be seen as a refined estimation of $z^*$. We will first build the connection between $\ell(z,z^*)$ with $\ell(\hat z(z),z^*)$ as in Lemma \ref{lem:I_one_step}. To establish the connection, we will decompose  the loss $\ell(\hat z(z),z^*)$ into several errors according to the difference in their behaviors. Then we will give  conditions (Condition \ref{con:I1} - \ref{con:I3}), under which we will show these errors are either negligible or well controlled by  $\ell(z,z^*)$.  With Lemma \ref{lem:I_one_step} established, in Lemma \ref{lem:linear convergence} we will show the connection can be extended to multiple iterations, under two more conditions  (Condition \ref{con:I4} - \ref{con:I5}).  Last, we will show all these conditions hold with high probability, and hence prove Theorem \ref{thm:main1}.

In the statement of Theorem \ref{thm:main1}, the covariance matrix $\Sigma^*$ is assumed to satisfy $\lambda_{d}(\Sigma^*) / \lambda_{1}(\Sigma^*)=O(1)$. Without loss of generality, we can replace it by assuming $\Sigma^*$  satisfies
\begin{align}\label{eqn:Sigma_assumption}
\lambda_{\min}\leq \lambda_1(\Sigma^*)\leq \lambda_d(\Sigma^*)\leq \lambda_{\max}
\end{align}
where $\lambda_{\min}, \lambda_{\max}>0$ are two constants. This is due to the following simple argument using the scaling properties of normal distributions. Let $\{Y_j\}$ be some dataset generated according to  Model 1 with parameters $\{\theta^*_a\}_{a\in[k]}$, $\Sigma^*$, and $z^*$.  The assumption $\lambda_{d}(\Sigma^*) / \lambda_{1}(\Sigma^*)=O(1)$ is equivalent to assume there exist some constants $\lambda_{\min}, \lambda_{\max}>0$ and some quantity $\sigma>0$ that may depend on $n$ such that  $\lambda_{\min}\sigma^2\leq \lambda_1(\Sigma^*)\leq \lambda_d(\Sigma^*)\leq \lambda_{\max}\sigma^2$. Then performing a scaling transformation  we  obtain another dataset $Y'_j = Y_j /\sigma$. Note that: 1) $\{Y'_j\}$ can be seen to be generated from  Model 1 with parameters $\{\theta^*_a/\sigma\}_{a\in[k]}$, $\Sigma^*/\sigma^2$, and $z^*$, 2)  clustering on $\{Y_j\}$ is equivalent to clustering on $\{Y'_j\}$,  3) by the definition in (\ref{eqn:SNR_I}),  the $\snr$s that are associated with the data generating processes of $\{Y'_j\}$ and  $\{Y_j\}$ are exactly equal to each other, and 4) we have $\lambda_{\min}\leq \lambda_1(\Sigma^*/\sigma^2)\leq \lambda_d(\Sigma^*/\sigma^2)\leq \lambda_{\max}$. Hence, in this section, we will assume (\ref{eqn:Sigma_assumption}) holds and it will not  lose any generality.

In the proof, we will mainly use the loss $\ell(\cdot,\cdot)$ for convenience.
Recall $\Delta$ is defined as the minimum distance among centers in (\ref{eqn:delta}). We have
\begin{align}\label{eqn:delta_h}
h(z,z^*) \leq \frac{\ell(z,z^*)}{n\Delta^2}.
\end{align}
The algorithmic guarantees Lemma \ref{lem:I_one_step} and Lemma \ref{lem:linear convergence} are established with respect to the $\ell(\cdot,\cdot)$ loss. But eventually we will use (\ref{eqn:delta_h}) to convert it into a result with respect to $h(\cdot,\cdot)$ in the proof of Theorem  \ref{thm:main1}.

\paragraph{Error Decomposition for the One-step Analysis:}
Consider an arbitrary $z\in[k]^n$. Apply (\ref{eqn:1_theta}), (\ref{eqn:1_Sigma}), and (\ref{eqn:1_z}) on $z$ to obtain $\{\hat\theta_a(z)\}_{a\in[k]}$, $\hat\Sigma(z)$, and $\hat z(z)$:
\begin{align*}
 \hat\theta_a(z) &= \frac{\sum_{j\in[n]} Y_j\indic{z_j=a}}{\sum_{j\in[n]}\indic{z_j=a}},\quad a\in[k]\\
  \hat\Sigma(z)  &=  \frac{\sum_{a\in[k]}\sum_{j\in[n]} (Y_j -   \hat\theta_a(z)) (Y_j -   \hat\theta_a(z))^T \indic{z_j=a}}{n},\\
 \hat  z_j(z) &= \argmin_{a\in[k]} (Y_j -  \hat\theta_a(z) )^T (\hat\Sigma(z))^{-1}(Y_j -  \hat\theta_a),\quad j\in[n].
\end{align*}
 For simplicity we use $\hat z$ that is short for $\hat z(z)$. Let $j\in[n]$ be an arbitrary index with 
 $\zjstar = a$. According to (\ref{eqn:1_z}), $z^*_j$ will be incorrectly estimated after on iteration in $\hat z_j$ if $a\neq \argmin_{a\in[k]} (Y_j -  \hat \theta_a(z) )^T (\hat \Sigma(z))^{-1}(Y_j -  \hat \theta_a(z) )$. That is, it is important to analyze the event
\begin{align}
\label{Discriminate}
\langle Y_j - \thetahatbz, (\Sigmahatz)^{-1}(Y_j - \thetahatbz)\rangle \leq \langle Y_j - \thetahataz, (\Sigmahatz)^{-1}(Y_j - \thetahataz)\rangle,
\end{align}
for any $b \in [k] \setminus \{a\} $. Note that $Y_j = \theta^*_a + \epsilon_j$.
After some rearrangements, we can see (\ref{Discriminate}) is equivalent to,
\begin{align}
&\langle \epsilon_j, (\Sigmahatzstar)^{-1}(\thetahatazstar - \thetahatbzstar)\rangle \notag\\
\leq & -\frac{1}{2}\langle \thetaastar - \thetabstar, (\Sigmastar)^{-1}(\thetaastar - \thetabstar)\rangle \notag
+  F_j(a,b,z)  + G_j(a,b,z) + H_j(a,b,z),
\end{align}
where
\begin{align*}
F_j(a,b,z) &= \langle \epsilon_j, (\Sigmahatz)^{-1}(\thetahatbz - \thetahatbzstar) \rangle - \langle \epsilon_j, (\Sigmahatz)^{-1}(\thetahataz - \thetahatazstar) \rangle\\
& + \langle \epsilon_j, ((\Sigmahatz)^{-1} - (\Sigmahatzstar)^{-1})(\thetahatbzstar - \thetahatazstar) \rangle,
\end{align*}
\begin{align*}
G_j(a,b,z) &= \br{\frac{1}{2}\langle \thetaastar-\thetahataz, (\Sigmahatz)^{-1}(\thetaastar-\thetahataz) \rangle - \frac{1}{2}\langle \thetaastar-\thetahatazstar, (\Sigmahatz)^{-1}(\thetaastar-\thetahatazstar) \rangle}\\
& +\br{ \frac{1}{2}\langle \thetaastar-\thetahatazstar, (\Sigmahatz)^{-1}(\thetaastar-\thetahatazstar) \rangle - \frac{1}{2}\langle \thetaastar-\thetahatazstar, (\Sigmahatzstar)^{-1}(\thetaastar-\thetahatazstar) \rangle}\\
& + \br{ - \frac{1}{2}\langle \thetaastar-\thetahatbz, (\Sigmahatz)^{-1}(\thetaastar-\thetahatbz) \rangle +\frac{1}{2} \langle \thetaastar-\thetahatbzstar, (\Sigmahatz)^{-1}(\thetaastar-\thetahatbzstar) \rangle}\\
& + \br{- \frac{1}{2}\langle \thetaastar-\thetahatbzstar, (\Sigmahatz)^{-1}(\thetaastar-\thetahatbzstar) \rangle +\frac{1}{2} \langle \thetaastar-\thetahatbzstar, (\Sigmahatzstar)^{-1}(\thetaastar-\thetahatbzstar) \rangle},
\end{align*}
and
\begin{align*}
H_j(a,b,z) = &\br{-\frac{1}{2}\langle \thetaastar - \thetahatbzstar, (\Sigmahatzstar)^{-1}(\thetaastar - \thetahatbzstar) \rangle + \frac{1}{2} \langle \thetaastar - \thetabstar, (\Sigmahatzstar)^{-1}(\thetaastar - \thetabstar) \rangle}\\
&+\br{-\frac{1}{2}\langle \thetaastar - \thetabstar, (\Sigmahatzstar)^{-1}(\thetaastar - \thetabstar)\rangle + \frac{1}{2}\langle \thetaastar - \thetabstar, (\Sigmastar)^{-1}(\thetaastar - \thetabstar)\rangle}\\
& +\br{ \frac{1}{2}\langle \thetaastar - \thetahatazstar, (\Sigmahatzstar)^{-1}(\thetaastar - \thetahatazstar) \rangle}.
\end{align*}
Here $\langle \epsilon_j, (\Sigmahatzstar)^{-1}(\thetahatazstar - \thetahatbzstar)\rangle \notag\leq  -\frac{1}{2}\langle \thetaastar - \thetabstar, (\Sigmastar)^{-1}(\thetaastar - \thetabstar)\rangle$ is the main term that will lead to the optimal rate. Among all the remaining terms,
$F_j(a,b,z)$ includes all  terms involving $\epsilon_j$.  $G_j(a,b,z)$ includes all terms related to $z$ and $H_j(a,b,z)$ consists of terms that only involves $z^*$. Readers can refer \cite{gao2019iterative} for more information about the decomposition.

\paragraph{Conditions and Guarantees for One-step Analysis.} 
To establish the guarantee for the one-step analysis, we first give several conditions on the error terms $F_j(a,b;z), G_j(a,b;z)$ and $H_j(a,b;z)$. 

\begin{condition}\label{con:I1}
Assume that
\begin{align*}
\max_{\{z:l(z,z^\ast) \leq \tau\}}\max_{j \in [n]}\max_{b\in[k]\setminus\{\zjstar\}} \frac{|H_j(\zjstar,b,z)|}{\langle \theta_{\zjstar}^*- \thetabstar, (\Sigmastar)^{-1}(\theta_{\zjstar}^* - \thetabstar) \rangle} \leq \frac{\delta}{4}
\end{align*}
holds with probability with at least $1-\eta_1$ for some $\tau, \delta, \eta_1 > 0$.	
\end{condition}

\begin{condition}\label{con:I2}
	Assume that
	\begin{align*}
	\max_{\{z:l(z,z^\ast) \leq \tau\}}\sum_{j=1}^{n}\max_{b\in[k]\setminus\{\zjstar\}}\frac{F_j(\zjstar,b,z)^2\|\thetazjstarstar - \thetabstar\|^2}{\langle \thetazjstarstar - \thetabstar, (\Sigma^\ast)^{-1}(\thetazjstarstar - \thetabstar) \rangle^2l(z,z^\ast)} \leq \frac{\delta^2}{256}
	\end{align*}
	holds with probability with at least $1-\eta_2$ for some $\tau, \delta, \eta_2 > 0$.	
\end{condition}

\begin{condition}\label{con:I3}
	Assume that 
	\begin{align*}
	\max_{\{z:l(z,z^\ast) \leq \tau\}}\max_{j \in [n]}\max_{b\in[k]\setminus\{\zjstar\}}\frac{|G_j(\zjstar,b,z)|}{\langle \thetazjstarstar - \thetabstar, (\Sigma^\ast)^{-1}(\thetazjstarstar - \thetabstar) \rangle} \leq \frac{\delta}{8}
	\end{align*}
	holds with probability with at least $1-\eta_3$ for some $\tau, \delta, \eta_3 > 0$.
\end{condition}
We next define a quantity that we refer it as the ideal error,
\begin{align*}
\xi_{\text{ideal}}(\delta) = \sum_{j=1}^{n}\sum_{b\in[k]\setminus\{\zjstar\}}\|\thetazjstarstar - \thetabstar\|^2\mathbb{I}\{\langle \epsilon_j, (\Sigmahatzstar)^{-1}(\thetahatazstar - \thetahatbzstar)\rangle \notag
\leq  -\frac{1-\delta}{2}\langle \thetaastar - \thetabstar, (\Sigmastar)^{-1}(\thetaastar - \thetabstar)\rangle\}.
\end{align*} 

\begin{lemma}\label{lem:I_one_step}
Assumes Conditions \ref{con:I1} - \ref{con:I3} hold for some $\tau,\delta, \eta_1,\eta_2,\eta_3,>0$. We then have 
	\begin{align*}
	\pbr{\ell(\hat z,z^*) \leq 2\xi_{\text{ideal}}(\delta) + \frac{1}{4}\ell(z, z^*)\text{ for any $z\in[k]^n$ such that $\ell(z,z^*)\leq \tau$}} \geq 1-\eta,
	\end{align*}
	where $\eta = \sum_{i=1}^{3}\eta_i$.
\end{lemma}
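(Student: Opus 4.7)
The plan is to perform a one-step analysis on the update: fix any $z$ with $\ell(z,z^*)\leq\tau$, and for each index $j$ with $z_j^*=a$, determine when the refined label $\hat z_j(z)$ can equal some $b\neq a$. By the optimality condition in (\ref{eqn:1_z}), this forces the \emph{Discriminate} inequality (\ref{Discriminate}), which by the decomposition preceding the statement is equivalent to
$$\langle \epsilon_j,(\Sigmahatzstar)^{-1}(\thetahatazstar-\thetahatbzstar)\rangle\leq -\tfrac{1}{2}\langle \thetaastar-\thetabstar,(\Sigmastar)^{-1}(\thetaastar-\thetabstar)\rangle + F_j(a,b,z)+G_j(a,b,z)+H_j(a,b,z).$$
Call this event $A_{j,b}$. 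Since $\hat z_j=b\neq z_j^*$ implies $A_{j,b}$, I would bound $\ell(\hat z, z^*)\leq \sum_j\sum_{b\neq z_j^*}\|\thetazjstarstar-\thetabstar\|^2\mathbb{I}\{A_{j,b}\}$ and work term by term.

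Next, I would absorb the ``deterministic'' error parts. On the event where Conditions \ref{con:I1} and \ref{con:I3} hold simultaneously for all admissible $z$, one has $|G_j(a,b,z)|+|H_j(a,b,z)|\leq \tfrac{3\delta}{8}\langle \thetaastar-\thetabstar,(\Sigmastar)^{-1}(\thetaastar-\thetabstar)\rangle$ uniformly, so $A_{j,b}$ implies
$$\langle \epsilon_j,(\Sigmahatzstar)^{-1}(\thetahatazstar-\thetahatbzstar)\rangle \leq -\tfrac{1-3\delta/4}{2}\langle \thetaastar-\thetabstar,(\Sigmastar)^{-1}(\thetaastar-\thetabstar)\rangle + F_j(a,b,z).$$

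Now comes the dichotomy on the stochastic term $F_j$. If $F_j(a,b,z)\leq \tfrac{\delta}{8}\langle\thetaastar-\thetabstar,(\Sigmastar)^{-1}(\thetaastar-\thetabstar)\rangle$, the preceding display reduces exactly to the ``ideal'' event appearing in $\xi_{\text{ideal}}(\delta)$ (since $3\delta/4+\delta/4=\delta$). Summed over $(j,b)$, this contribution is at most $\xi_{\text{ideal}}(\delta)$, and the factor of $2$ in the statement is a harmless slack to simplify the iterative analysis downstream. In the complementary bad regime $F_j>\tfrac{\delta}{8}\langle\cdot\rangle$, only $b=\hat z_j$ contributes in $\sum_b\|\thetaastar-\thetabstar\|^2\mathbb{I}\{\hat z_j=b,\,F_j>\tfrac{\delta}{8}\langle\cdot\rangle\}$, so Markov's inequality $\mathbb{I}\{F_j>t\}\leq F_j^2/t^2$ produces the bound $\tfrac{64}{\delta^2}\sum_j\max_b F_j(\zjstar,b,z)^2\|\thetazjstarstar-\thetabstar\|^2/\langle\cdot\rangle^2$, which by Condition \ref{con:I2} is at most $\tfrac{1}{4}\ell(z,z^*)$. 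A union bound over the failure events of Conditions \ref{con:I1}--\ref{con:I3} then gives the claimed probability of at least $1-\eta$.

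The main obstacle is the third step: translating the random displacement $F_j(a,b,z)$ -- which couples the noise $\epsilon_j$ with the estimation errors in $\hat\theta$ and $\hat\Sigma$ induced by the imperfect $z$ -- into a genuine contraction of the form $\tfrac{1}{4}\ell(z,z^*)$. Everything hinges on the fact that Condition \ref{con:I2} is calibrated precisely so that $\sum_j\max_b F_j(\zjstar,b,z)^2\|\thetazjstarstar-\thetabstar\|^2/\langle\cdot\rangle^2$ scales linearly in $\ell(z,z^*)$; the Markov step then converts a union-style counting over $(j,b)$ into this very sum, yielding the geometric contraction factor that powers the iterative argument in Lemma \ref{lem:linear convergence}.
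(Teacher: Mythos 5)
Your proposal is correct and follows essentially the same route as the paper: bound $\mathbb{I}\{\hat z_j=b\}$ by the discriminating event, absorb $G_j+H_j$ via Conditions \ref{con:I1} and \ref{con:I3}, split on whether $F_j$ exceeds $\tfrac{\delta}{8}\langle\cdot\rangle$, and convert the bad regime into $\tfrac14\ell(z,z^*)$ by the Markov-type bound $\mathbb{I}\{F_j>t\}\leq F_j^2/t^2$ together with Condition \ref{con:I2}. The constants and the observation that the factor $2$ on $\xi_{\text{ideal}}(\delta)$ is slack both match the paper's argument.
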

\begin{proof}
We notice that
\begin{align*}
\mathbb{I}\left\{\hat{z}_j = b\right\} &\leq \indic{\langle Y_j - \thetahatbz, (\Sigmahatz)^{-1}(Y_j - \thetahatbz)\rangle \leq \langle Y_j - \thetahataz, (\Sigmahatz)^{-1}(Y_j - \thetahataz)\rangle}\\
= ~& \mathbb{I}\bigg\{\langle \epsilon_j, (\Sigmahatzstar)^{-1}(\hat{\theta}_{\zjstar}(z^*) - \thetahatbzstar)\rangle \notag\\
&\leq  -\frac{1}{2}\langle \theta_{\zjstar}^* - \thetabstar, (\Sigmastar)^{-1}(\theta_{\zjstar}^* - \thetabstar)\rangle \notag
+  F_j(\zjstar,b,z)  + G_j(\zjstar,b,z) + H_j(\zjstar,b,z)\bigg\}\\
\leq ~& \mathbb{I}\left\{\langle \epsilon_j, (\Sigmahatzstar)^{-1}(\hat{\theta}_{\zjstar}(z^*) - \thetahatbzstar)\rangle \leq -\frac{1-\delta}{2}\langle \theta_{\zjstar}^* - \thetabstar, (\Sigmastar)^{-1}(\theta_{\zjstar}^* - \thetabstar)\rangle\right\}\\
&+ \mathbb{I}\left\{\frac{\delta}{2}\langle \theta_{\zjstar}^* - \thetabstar, (\Sigmastar)^{-1}(\theta_{\zjstar}^* - \thetabstar)\rangle \leq F_j(\zjstar,b,z)  + G_j(\zjstar,b,z) + H_j(\zjstar,b,z)\right\}\\
\leq ~& \mathbb{I}\left\{\langle \epsilon_j, (\Sigmahatzstar)^{-1}(\hat{\theta}_{\zjstar}(z^*) - \thetahatbzstar)\rangle \leq -\frac{1-\delta}{2}\langle \theta_{\zjstar}^* - \thetabstar, (\Sigmastar)^{-1}(\theta_{\zjstar}^* - \thetabstar)\rangle\right\}\\
&+ \mathbb{I}\left\{\frac{\delta}{8}\langle \theta_{\zjstar}^* - \thetabstar, (\Sigmastar)^{-1}(\theta_{\zjstar}^* - \thetabstar)\rangle \leq F_j(\zjstar,b,z) \right\}\\
\leq ~& \mathbb{I}\left\{\langle \epsilon_j, (\Sigmahatzstar)^{-1}(\hat{\theta}_{\zjstar}(z^*) - \thetahatbzstar)\rangle \leq -\frac{1-\delta}{2}\langle \theta_{\zjstar}^* - \thetabstar, (\Sigmastar)^{-1}(\theta_{\zjstar}^* - \thetabstar)\rangle\right\}\\
&+ \frac{64F_j(\zjstar,b,z)^2}{\delta^2\langle \theta_{\zjstar}^* - \thetabstar, (\Sigmastar)^{-1}(\theta_{\zjstar}^* - \thetabstar)\rangle^2} ,
\end{align*}
where the third inequality comes from Condition \ref{con:I1}. Thus, we have 
\begin{align*}
&\ell(\hat{z},z^*)\\
=~ & \sum_{j=1}^{n}\sum_{b\in[k]\setminus \{a\}}\left\|\thetabstar - \theta_{\zjstar}^*\right\|^2\mathbb{I}\left\{\hat{z}_j=b\right\}\\
\leq~ & \xi_{\text{ideal}}(\delta) + \sum_{j=1}^{n}\sum_{b\in[k]\setminus \{a\}}\left\|\thetabstar - \theta_{\zjstar}^*\right\|^2\mathbb{I}\left\{\hat{z}_j=b\right\}\frac{64F_j(\zjstar,b,z)^2}{\delta^2\langle \theta_{\zjstar}^* - \thetabstar, (\Sigmastar)^{-1}(\theta_{\zjstar}^* - \thetabstar)\rangle^2}\\
\leq~ &\xi_{\text{ideal}}(\delta) + \frac{\ell(z,z^*)}{4} ,
\end{align*}
which implies Lemma \ref{lem:I_one_step}. Here the last inequality uses Conditions \ref{con:I2} and \ref{con:I3}.
\end{proof}

\paragraph{Conditions and Guarantees for Multiple Iterations.} 
In the above we establish a statistical guarantee for the one-step analysis. Now we will extend the result to multiple iterations. That is, starting from some initialization $z^{(0)}$, we will characterize how the losses $\ell(z^{(0)},z^*)$, $\ell(z^{(1)},z^*)$, $\ell(z^{(2)},z^*)$, \ldots, decay.
We impose a condition on $\xi_{\text{ideal}}(\delta)$ and a condition for $z^{(0)}$.
\begin{condition}\label{con:I4}
	Assume that
	\begin{align*}
	\xi_{\text{ideal}}(\delta) \leq \frac{3\tau}{8}
	\end{align*}
	holds with probability with at least $1-\eta_4$ for some $\tau, \delta, \eta_4 > 0$.
\end{condition}
Finally, we need a condition on the initialization.
\begin{condition}\label{con:I5}
	Assume that
	\begin{align*}
	\ell(z^{(0)}, z^*) \leq \tau
	\end{align*}
	holds with probability with at least $1-\eta_5$ for some $\tau,  \eta_5 > 0$.
\end{condition}
With these conditions satisfied, we can give a lemma that shows the linear convergence guarantee for our algorithm.

\begin{lemma}\label{lem:linear convergence}
	Assumes Conditions \ref{con:I1} - \ref{con:I5} hold for some $\tau,\delta, \eta_1,\eta_2,\eta_3,\eta_4,\eta_5>0$. We then have 
	\begin{align*}
	\ell(z^{(t)},z^*) \leq 2\xi_{\text{ideal}}(\delta) + \frac{1}{4}\ell(z^{(t-1)}, z^*)
	\end{align*}
	for all $t \geq 1$, with probability at least $1-\eta$, where $\eta = \sum_{i=1}^{5}\eta_i$.
\end{lemma}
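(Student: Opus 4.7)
The plan is to obtain the multi-iteration guarantee by applying the one-step result Lemma \ref{lem:I_one_step} inductively, combined with Conditions \ref{con:I4} and \ref{con:I5} that ensure the induction hypothesis propagates. The crucial observation is that the conclusion of Lemma \ref{lem:I_one_step} is \emph{uniform} over all $z \in [k]^n$ satisfying $\ell(z,z^*) \leq \tau$, so a single favorable realization of the randomness simultaneously controls the updates from every iterate that stays within the $\tau$-ball, eliminating any need to re-randomize at each step.

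First I would define the good event $\mathcal{E}$ as the intersection of: (i) the event from Lemma \ref{lem:I_one_step} on which $\ell(\hat z(z), z^*) \leq 2\xi_{\text{ideal}}(\delta) + \tfrac{1}{4}\ell(z,z^*)$ for every $z$ with $\ell(z,z^*) \leq \tau$; (ii) the event from Condition \ref{con:I4} on which $\xi_{\text{ideal}}(\delta) \leq 3\tau/8$; and (iii) the event from Condition \ref{con:I5} on which $\ell(z^{(0)}, z^*) \leq \tau$. A union bound, using that Lemma \ref{lem:I_one_step} itself already absorbs $\eta_1+\eta_2+\eta_3$, gives $\p(\mathcal{E}) \geq 1 - \sum_{i=1}^5 \eta_i = 1 - \eta$.

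Next, on $\mathcal{E}$ I would prove by induction on $t \geq 0$ that $\ell(z^{(t)}, z^*) \leq \tau$. The base case $t=0$ is immediate from (iii). For the inductive step, assume $\ell(z^{(t-1)}, z^*) \leq \tau$. Because $z^{(t)} = \hat z(z^{(t-1)})$ and (i) applies with $z = z^{(t-1)}$, we obtain
\begin{align*}
\ell(z^{(t)}, z^*) \leq 2\xi_{\text{ideal}}(\delta) + \tfrac{1}{4}\ell(z^{(t-1)}, z^*) \leq 2 \cdot \tfrac{3\tau}{8} + \tfrac{\tau}{4} = \tau,
\end{align*}
where the second inequality uses (ii) together with the inductive hypothesis. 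This closes the induction, and the displayed chain of inequalities without the final simplification yields precisely the claimed bound $\ell(z^{(t)}, z^*) \leq 2\xi_{\text{ideal}}(\delta) + \tfrac{1}{4}\ell(z^{(t-1)}, z^*)$ for every $t \geq 1$.

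There is no substantive obstacle here; the entire argument is a short induction once Lemma \ref{lem:I_one_step} is in hand. The only subtlety worth flagging is making sure to invoke the \emph{uniform} form of Lemma \ref{lem:I_one_step}: if one instead applied it pointwise to each $z^{(t-1)}$, the failure probabilities $\eta_1 + \eta_2 + \eta_3$ would accumulate across iterations, which is avoided precisely because the lemma was stated as a simultaneous guarantee over all admissible $z$.
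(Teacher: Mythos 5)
Your proof is correct and is exactly the argument the paper intends: the paper's own proof is a one-line appeal to induction using Conditions \ref{con:I4}--\ref{con:I5} together with the uniform one-step guarantee of Lemma \ref{lem:I_one_step}, and your write-up simply makes explicit the good event, the union bound, and the induction step $2\cdot\tfrac{3\tau}{8}+\tfrac{\tau}{4}=\tau$. The point you flag about invoking the uniform (rather than pointwise) form of Lemma \ref{lem:I_one_step} is precisely why the failure probabilities do not accumulate across iterations.
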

\begin{proof}
	By Condition \ref{con:I4}, \ref{con:I5}, and a mathematical induction argument, we can easily know $\ell(z^{(t)},z^*) \leq \tau$ for any $t \geq 0$. Thus, Lemma \ref{lem:linear convergence} is a direct extension of Lemma \ref{lem:I_one_step}.
\end{proof}

\paragraph{With-high-probability Results for the Conditions and The Proof of The Main Theorem.}
Recall the definition of $\Delta$ in \eqref{eqn:delta}. Recall that in (\ref{eqn:Sigma_assumption}) we assume $\lambdamin \leq \lambda_{1}(\Sigma^*) \leq  \lambda_{d}(\Sigma^*)\leq \lambdamax$ for two constants $\lambdamin, \lambdamax>0$. Hence we have $\Delta$ is in the same order of $\snr$. Specifically, we have
\begin{align}
\frac{1}{\lambdamax}\Delta \leq \snr \leq \frac{1}{\lambdamin}\Delta.\label{eqn:snr_delta_1}
\end{align}
Hence the assumption $\snr/k\rightarrow\infty$ in the statement of Theorem \ref{thm:main1} is equivalently $\Delta/k \rightarrow\infty$.
Next, we give two lemmas to clarify the Conditions. The first lemma shows that $\delta$ can  be taken for some $o(1)$ term and the second lemma shows for any $\delta = o(1)$, $\xi_{\text{ideal}}(\delta)$ is upper bounded by the desired minimax rate multiply by the sample size $n$. 
\begin{lemma}\label{lem:conditions}
	Under the same conditions as in Theorem \ref{thm:main1}, for any constant $C'>0$, there exists some constant $C>0$ only depending on $\alpha$ and $C'$ such that
	\begin{align}
	\max_{\{z:\ell(z,z^\ast) \leq \tau\}}\max_{j \in [n]}\max_{b\in[k]\setminus\{\zjstar\}} \frac{|H_j(\zjstar,b,z)|}{\langle \theta_{\zjstar}^*- \thetabstar, (\Sigma^\ast)^{-1}(\theta_{\zjstar}^* - \thetabstar) \rangle} &\leq C\sqrt{\frac{k(d+\log n)}{n}}\label{Hcondi}\\
	\max_{\{z:\ell(z,z^\ast) \leq \tau\}}\sum_{j=1}^{n}\max_{b\in[k]\setminus\{\zjstar\}}\frac{F_j(\zjstar,b,z)^2\|\thetazjstarstar - \thetabstar\|^2}{\langle \thetazjstarstar - \thetabstar, (\Sigma^\ast)^{-1}(\thetazjstarstar - \thetabstar) \rangle^2\ell(z,z^\ast)} &\leq Ck^3\br{\frac{\tau}{n}+\frac{1}{\Delta^2}+\frac{d^2}{n\Delta^2}} \label{Fcondi}\\
	\max_{\{z:\ell(z,z^\ast) \leq \tau\}}\max_{j \in [n]}\max_{b\in[k]\setminus\{\zjstar\}}\frac{|G_j(\zjstar,b,z)|}{\langle \thetazjstarstar - \thetabstar, (\Sigma^\ast)^{-1}(\thetazjstarstar - \thetabstar) \rangle} &\leq Ck\br{\frac{\tau}{n} + \frac{1}{\Delta}\sqrt{\frac{\tau}{n}} + \frac{d\sqrt{\tau}}{n\Delta}}\label{Gcondi}
	\end{align}
	with probability at least $1- n^{-C'}$.
\end{lemma}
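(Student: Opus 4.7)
The plan is to reduce each of the three inequalities to a handful of common ingredients: oracle estimation bounds for $\thetahatazstar$ and $\Sigmahatzstar$ depending only on $z^*$; perturbation bounds for $\hat\theta_a(z) - \thetahatazstar$ and $\Sigmahatz - \Sigmahatzstar$ that are uniform over $\cbr{z:\ell(z,z^*)\leq\tau}$; and Gaussian concentration for sums linear in $\epsilon_j$. Because of the well-conditioning assumption \eqref{eqn:Sigma_assumption}, the denominator $\langle \thetazjstarstar - \thetabstar, (\Sigmastar)^{-1}(\thetazjstarstar - \thetabstar)\rangle$ is comparable to $\norm{\thetazjstarstar - \thetabstar}^2$, so each inequality reduces to controlling a numerator relative to squared distances between centers, with all covariance inverses treated as bounded operators on the good event.

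For \eqref{Hcondi}, which involves only $z^*$, I would first invoke sub-Gaussian tail bounds together with the cluster-size lower bound $\alpha n/k$ to obtain $\max_{a}\norm{\thetahatazstar - \thetaastar} \lesssim \sqrt{k(d+\log n)/n}$, and then a matrix concentration argument (using $kd = O(\sqrt{n})$) to get $\opnorm{\Sigmahatzstar - \Sigmastar} = o(1)$, so $\opnorm{(\Sigmahatzstar)^{-1}} = O(1)$ on the same event. Expanding each of the four summands of $H_j(\zjstar,b,z)$, applying Cauchy--Schwarz with the above oracle rates, and dividing by the denominator delivers \eqref{Hcondi}; there is no $z$-dependence in $H_j$, so the maximum over $z$ is trivial.

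For \eqref{Gcondi}, the point is to couple the oracle bounds with perturbation estimates. The constraint $\ell(z,z^*)\leq\tau$ forces $M(z):=\abs{\cbr{j:z_j\neq z^*_j}}\leq \tau/\Delta^2$, since each misassigned index contributes at least $\Delta^2$ to $\ell$. A direct computation gives the identity
\begin{align*}
\hat\theta_a(z) - \thetahatazstar = \frac{1}{n_a(z)}\sum_{j}\br{Y_j - \thetahatazstar}\br{\indzja - \indzjstara},
\end{align*}
so Cauchy--Schwarz together with a $\chi^2$-tail bound on $\sum_{j:\,z_j\neq z^*_j}\norm{\epsilon_j}^2$ yields $\norm{\hat\theta_a(z) - \thetahatazstar} \lesssim k\bigl(\sqrt{\tau/n}/\Delta + d\sqrt{\tau}/(n\Delta) + \tau/n\bigr)$ uniformly in $a$; a parallel decomposition controls $\opnorm{\Sigmahatz - \Sigmahatzstar}$ at a comparable rate. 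Substituting these into the four summands of $G_j(\zjstar,b,z)$ and normalizing by the denominator yields \eqref{Gcondi}. Since the resulting upper bound depends on $z$ only through $M(z)$ and $\ell(z,z^*)$, both of which are dominated by the $\tau$-budget, the maximum over $z$ is handled uniformly.

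The main obstacle is \eqref{Fcondi}, because each $F_j(\zjstar,b,z)$ is linear in $\epsilon_j$ but in a direction $u_j(z)=(\Sigmahatz)^{-1}(\hat\theta_b(z)-\thetahatbzstar)-\cdots$ that is itself both data-dependent and $z$-dependent, and we must additionally obtain a \emph{sum} over $j$ controlled uniformly in $z$. The strategy is to apply Cauchy--Schwarz pointwise, $F_j(\zjstar,b,z)^2\leq \norm{\epsilon_j}^2\cdot\norm{u_j(z)}^2\cdot O(1)$, then sum in $j$ and bound $\sum_j\norm{u_j(z)}^2$ via the perturbation estimates from the previous step, together with $\sum_j\norm{\epsilon_j}^2\lesssim nd$ on a high-probability event. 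After normalizing by the denominators and by the outer factor $\ell(z,z^*)$, the perturbation budget gives a bound of order $k^3\bigl(\tau/n + 1/\Delta^2 + d^2/(n\Delta^2)\bigr)$, which is \eqref{Fcondi}. The delicate step is passing from a fixed $z$ to a uniform bound over the combinatorial set $\cbr{z:\ell(z,z^*)\leq\tau}$: following the framework of \cite{gao2019iterative}, one reparameterizes by the confusion counts $n_{a,b}=\abs{\cbr{j:z_j=a,z^*_j=b}}$, applies a union bound over the polynomially-many admissible tuples $(n_{a,b})$, and absorbs the enumeration into the $n^{-C'}$ probability tolerance by choosing the concentration constants large enough.
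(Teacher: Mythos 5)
Your treatment of \eqref{Hcondi} and \eqref{Gcondi} is essentially the paper's argument: oracle rates for $\thetahatazstar$ and $\Sigmahatzstar$, perturbation bounds for $\hat\theta_a(z)-\thetahatazstar$ and $\Sigmahatz-\Sigmahatzstar$ driven by the budget $|\{j:z_j\neq z^*_j\}|\le \ell(z,z^*)/\Delta^2$, and bounded inverses on the good event. Those two parts are sound.

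The gap is in \eqref{Fcondi}. Your pointwise Cauchy--Schwarz $F_j(\zjstar,b,z)^2\lesssim \norm{\epsilon_j}^2\norm{u_j(z)}^2$ followed by $\sum_j\norm{\epsilon_j}^2\lesssim nd$ loses a factor of $d$ that cannot be absorbed under the stated assumptions. Concretely, for the piece $F^{(2)}_j=-\langle\epsilon_j,((\Sigmahatz)^{-1}-(\Sigmahatzstar)^{-1})(\thetaastar-\thetabstar)\rangle$ your route gives, after normalization, a contribution of order $d\cdot k^2\br{\tau/n+1/\Delta^2+d^2/(n\Delta^2)}$ rather than the claimed $k^3\br{\tau/n+1/\Delta^2+d^2/(n\Delta^2)}$; since the theorem only assumes $kd=O(\sqrt n)$ and $\Delta/k\to\infty$, the dimension $d$ may be far larger than $k$, and e.g.\ the term $k^2d/\Delta^2$ need not be $o(1)$ even though $k^3/\Delta^2$ is. The correct argument exploits that the direction vector in each piece of $F_j$ is constant across all $j$ in a given true cluster: writing $v=((\Sigmahatz)^{-1}-(\Sigmahatzstar)^{-1})(\thetaastar-\thetabstar)$ one bounds
\begin{align*}
\sum_{j=1}^n\indzjstara\,\langle\epsilon_j,v\rangle^2 \;=\; v^T\Big(\sum_{j=1}^n\indzjstara\,\epsilon_j\epsilon_j^T\Big)v \;\le\; \norm{v}^2\,\Big\|\sum_{j=1}^n\indzjstara\,\epsilon_j\epsilon_j^T\Big\| \;\lesssim\; \norm{v}^2\br{d+n_a},
\end{align*}
using the operator-norm concentration \eqref{ine.lem.miscel.3}; this replaces the crude $n_a d\,\norm{v}^2$ by $\br{d+n_a}\norm{v}^2$ and recovers exactly the missing factor of $d$. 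The same quadratic-form trick applies to $F^{(1)}_j$ and $F^{(3)}_j$. Separately, your uniformity device (union bound over confusion-count tuples $n_{a,b}$) does not discretize the problem, because $u_j(z)$ depends on $z$ through $\hat\theta(z)$ and $\Sigmahatz$ and not merely through the confusion counts; the paper instead conditions once on a fixed high-probability event on which concentration holds uniformly over all subsets $T\subset[n]$ (inequalities \eqref{ine:lem:miscel.1}--\eqref{ine:keyine}), after which every bound is deterministic in $z$ and the supremum over $\{z:\ell(z,z^*)\le\tau\}$ is free.
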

\begin{proof}
Under the conditions of Theorem \ref{thm:main1}, the inequalities \eqref{ine:lem:miscel.1}-\eqref{ine:keyine} hold with probability at least $1-n^{-C'}$. In the remaining proof, we will work on the event these inequalities hold.
	Denote $ \hat \Sigma_a(z)  =  \frac{\sum_{j\in[n]} (Y_j -   \hat\theta_a(z)) (Y_j -    \hat\theta_a(z))^T \indic{z_j=a}}{\sum_{j\in[n]}\indic{z_j=a}}$ and $\Sigma^*_a = \Sigma^*$ for any $a\in[k]$. Then we have the equivalence
	\begin{align*}
	\Sigmahatzstar - \Sigmastar = \sum_{a=1}^{k}\frac{\sum_{j=1}^{n}\mathbb{I}\{z_j^*=a\}}{n}(\Sigmahatazstar - \Sigmaastar).
	\end{align*}
	Hence, we can use the results  from Lemma \ref{Cov.inv.rate} and Lemma \ref{Cov.inv.rate2}.
	
	 By \eqref{cov_star_to_true} and \eqref{covrate}, we  have
	\begin{align}
	\|\Sigmahatzstar - \Sigmastar\| \preceq~ & \sqrt{\frac{k(d+\log n)}{n}}, \notag
	\end{align}
	and 
	\begin{align*}
	\|\Sigmahatz - \Sigmahatzstar\| =~& \left\|\sum_{a=1}^{k}\frac{\sum_{j=1}^{n}\mathbb{I}\{z_j=a\}}{n}\Sigmahataz - \sum_{a=1}^{k}\frac{\sum_{j=1}^{n}\mathbb{I}\{z_j^*=a\}}{n}\Sigmahatazstar\right\|\\
	\preceq~& \left\|\sum_{a=1}^{k}\frac{\sum_{j=1}^{n}\mathbb{I}\{z_j=a\}}{n}(\Sigmahataz - \Sigmahatzstar)\right\|+ \left\|\sum_{a=1}^k\frac{\sum_{j=1}^{n}(\indzja - \indzjstara)}{n}\Sigmahatazstar\right\|\\
	 \preceq~& \frac{k\sqrt{n\ell(z,z^*)}}{n\Delta} + \frac{k}{n}\ell(z,z^*)+\frac{kd}{n\Delta}\sqrt{\ell(z,z^*)} + \frac{k}{n\Delta^2}\ell(z,z^*)\\
	 \preceq~& \frac{k\sqrt{n\ell(z,z^*)}}{n\Delta} + \frac{k}{n}\ell(z,z^*)+\frac{kd}{n\Delta}\sqrt{\ell(z,z^*)}.
	\end{align*}
	By the assumption that $kd=O(\sqrt{n})$, $\frac{\Delta}{k}\rightarrow\infty$ and $\tau=o(n/k)$, we have $\|\Sigmahatzstar - \Sigmastar\|, \|\Sigmahatz - \Sigmahatzstar\|=o(1)$, which implies $\|(\Sigmahatzstar)^{-1}\|,\|(\hat{\Sigma}(z))^{-1}\|\lesssim 1$. 
	Thus, we have
	\begin{align}
	\|(\Sigmahatzstar)^{-1} - (\Sigmastar)^{-1}\| \leq \|(\Sigmahatzstar)^{-1} \| \|\Sigmahatzstar - \Sigmastar\| \|(\Sigmastar)^{-1}\| \preceq \sqrt{\frac{k(d+\log n)}{n}}, \label{Sigmahatzstartrue.inv}
	\end{align}
	and similarly
	\begin{align}\label{Sigmazzstar.inv}
	\|(\hat{\Sigma}(z))^{-1} - (\hat{\Sigma}(z^*))^{-1}\| \preceq \frac{k}{n}\ell(z,z^*)+\frac{k\sqrt{n\ell(z,z^*)}}{n\Delta}+\frac{kd}{n\Delta}\sqrt{\ell(z,z^*)}.
	\end{align}
	Now we start to prove \eqref{Hcondi}-\eqref{Gcondi}. Let $F_j(a,b,z) = F_j^{(1)}(a,b,z) + F_j^{(2)}(a,b,z)+F_j^{(3)}(a,b,z)$ where 
	\begin{align*}
	F_j^{(1)}(a,b,z)&:=\langle \epsilon_j, (\Sigmahatz)^{-1}(\thetahatbz - \thetahatbzstar) \rangle - \langle \epsilon_j, (\Sigmahatz)^{-1}(\thetahataz - \thetahatazstar)\rangle,\\
	F_j^{(2)}(a,b,z)&:= -\langle \epsilon_j, ((\Sigmahatz)^{-1} - (\Sigmahatzstar)^{-1})(\thetaastar - \thetabstar)\rangle,\\
	F_j^{(3)}(a,b,z)&:= -\langle \epsilon_j, ((\Sigmahatz)^{-1} - (\Sigmahatzstar)^{-1})(\thetabstar - \thetahatbzstar)\rangle + \langle \epsilon_j, ((\Sigmahatz)^{-1} - (\Sigmahatzstar)^{-1})(\thetaastar - \thetahatazstar)\rangle.
	\end{align*}
	Notice that
	\begin{align*}
	&\sum_{j=1}^{n}\max_{b\in[k]\setminus\{\zjstar\}}\frac{F_j^{(2)}(\zjstar,b,z)^2\|\thetazjstarstar - \thetabstar\|^2}{\langle \thetazjstarstar - \thetabstar, (\Sigma^\ast)^{-1}(\thetazjstarstar - \thetabstar) \rangle^2\ell(z,z^\ast)}\\
	\preceq~ &\sum_{j=1}^{n}\sum_{b=1}^{k}\frac{\bigg|\langle \epsilon_j, ((\Sigmahatz)^{-1} - (\Sigmahatzstar)^{-1})(\thetazjstarstar - \thetabstar)\rangle\bigg|^2}{\|\thetazjstarstar - \thetabstar\|^2\ell(z,z^*)}\\
	\leq~ & \sum_{b=1}^{k}\sum_{a\in[k]\setminus \{b\}}\sum_{j=1}^n\indzjstara\frac{\bigg|\langle \epsilon_j, ((\Sigmahatz)^{-1} - (\Sigmahatzstar)^{-1})(\thetaastar - \thetabstar)\rangle\bigg|^2}{\|\thetaastar - \thetabstar\|^2\ell(z,z^*)}\\
	\leq~& \sum_{b=1}^{k}\sum_{a\in[k]\setminus \{b\}}\frac{\|((\Sigmahatz)^{-1} - (\Sigmahatzstar)^{-1})(\thetaastar - \thetabstar)\|^2}{\|\thetaastar - \thetabstar\|^2\ell(z,z^*)}\bigg\|\sum_{j=1}^{n}\indzjstara\epsilon_j\epsilon_j^T\bigg\|\\
	\preceq~& k^3(\frac{\tau}{n}+\frac{1}{\Delta^2}+\frac{d^2}{n\Delta^2}),
	\end{align*}
	where we use \eqref{ine.lem.miscel.3},  \eqref{Sigmazzstar.inv}, and the fact that $\ell(z,z^*) \leq \tau$ and $kd=O(\sqrt{n})$ for the last inequality. From (\ref{thetaazstartrue}) we have  $\max_{a\in[k]}\|\thetaastar - \thetahatazstar\| =o(1)$ under the assumption $kd=O(\sqrt{n})$. By the similar analysis as in $	F_j^{(2)}(a,b,z)$, we  have
	\begin{align*}
	\sum_{j=1}^{n}\max_{b\in[k]\setminus\{\zjstar\}}\frac{F_j^{(3)}(\zjstar,b,z)^2\|\thetazjstarstar - \thetabstar\|^2}{\langle \thetazjstarstar - \thetabstar, (\Sigma^\ast)^{-1}(\thetazjstarstar - \thetabstar) \rangle^2\ell(z,z^\ast)} \preceq k^3(\frac{\tau}{n}+\frac{1}{\Delta^2}+\frac{d^2}{n\Delta^2}).
	\end{align*}
	Similarly, we have
	\begin{align*}
	&\sum_{j=1}^{n}\max_{b\in[k]\setminus\{\zjstar\}}\frac{F_j^{(1)}(\zjstar,b,z)^2\|\thetazjstarstar - \thetabstar\|^2}{\langle \thetazjstarstar - \thetabstar, (\Sigma^\ast)^{-1}(\thetazjstarstar - \thetabstar) \rangle^2\ell(z,z^\ast)}\\
	\preceq~& \sum_{b=1}^{k}\sum_{a\in[k]\setminus \{b\}}\frac{\|(\Sigmahatz)^{-1}(\thetahataz - \thetahatazstar)\|^2}{\|\thetaastar - \thetabstar\|^2\ell(z,z^*)}\bigg\|\sum_{j=1}^{n}\indzjstara\epsilon_j\epsilon_j^T\bigg\|\\
	\preceq~& \frac{k^3}{\Delta^4},
	\end{align*}
	where we use \eqref{thetazzstar} and the fact that $(\Sigmahatz)^{-1}$ has bounded operator norm. Combining these terms together, we obtain \eqref{Fcondi}. 
	
	Next, for \eqref{Hcondi}, by \eqref{thetaazstartrue} we have 
	\begin{align*}
	&|-\langle \thetaastar - \thetahatbzstar, (\Sigmahatzstar)^{-1}(\thetaastar - \thetahatbzstar) \rangle +  \langle \thetaastar - \thetabstar, (\Sigmahatzstar)^{-1}(\thetaastar - \thetabstar) \rangle |\\
	\leq~& |\langle \thetabstar - \thetahatbzstar, (\Sigmahatzstar)^{-1}(\thetabstar - \thetahatbzstar)\rangle|+ 2|\langle \thetabstar - \thetahatbzstar, (\Sigmahatzstar)^{-1}(\thetaastar - \thetabstar)\rangle|\\
	\preceq~& \frac{k(d+\log n)}{n}+ \sqrt{\frac{k(d+\log n)}{n}}\|\thetaastar - \thetabstar\|,
	\end{align*}
	and 
	\begin{align*}
	\langle \thetaastar - \thetahatazstar, (\Sigmahatzstar)^{-1}(\thetaastar - \thetahatazstar) \rangle \preceq \frac{k(d+\log n)}{n}.
	\end{align*}
	By \eqref{Sigmahatzstartrue.inv} we have
	\begin{align*}
	-\langle \thetaastar - \thetabstar, (\Sigmahatzstar)^{-1}(\thetaastar - \thetabstar)\rangle + \langle \thetaastar - \thetabstar, (\Sigmastar)^{-1}(\thetaastar - \thetabstar)\rangle \preceq \sqrt{\frac{k(d+\log n)}{n}}\|\thetaastar - \thetabstar\|^2.
	\end{align*}
	Using the results above we can get \eqref{Hcondi}. 
	
	Finally we are going to establish \eqref{Gcondi}. Recall the definition of $G_j(a,b,z)$ which has four terms. For the third and fourth terms, we have
	\begin{align*}
	&-\langle \thetaastar-\thetahatbz, (\Sigmahatz)^{-1}(\thetaastar-\thetahatbz) \rangle + \langle \thetaastar-\thetahatbzstar, (\Sigmahatz)^{-1}(\thetaastar-\thetahatbzstar) \rangle \\
	\preceq~ & \|\thetahatbz - \thetahatbzstar\|^2 + \|\thetahatbz- \thetahatbzstar\|\|\thetaastar - \thetabstar\|,
	\end{align*}
	and 
	\begin{align*}
	&-\langle \thetaastar-\thetahatbzstar, (\Sigmahatz)^{-1}(\thetaastar-\thetahatbzstar) \rangle +\langle \thetaastar-\thetahatbzstar, (\Sigmahatzstar)^{-1}(\thetaastar-\thetahatbzstar) \rangle \\
	\preceq~& \|\thetaastar - \thetabstar\|^2\|(\Sigmahatz)^{-1} - (\Sigmahatzstar)^{-1}\|.
	\end{align*}
	We can easily verify that the other two terms are smaller than the above two terms. Then, by using (\ref{thetazzstar}) and (\ref{Sigmazzstar.inv}), we have 
	\begin{align*}
	&\frac{|G_j(\zjstar,b,z)|}{\langle \thetazjstarstar - \thetabstar, (\Sigma^\ast)^{-1}(\thetazjstarstar - \thetabstar) \rangle} \\
	&\lesssim \frac{\|\thetahatbz - \thetahatbzstar\|^2 + \|\thetahatbz- \thetahatbzstar\|\|\theta^*_{z^*_j} - \theta^*_b\| + \|\theta^*_{z^*_j} - \theta^*_b\|^2\|(\Sigmahatz)^{-1} - (\Sigmahatzstar)^{-1}\|}{\|\theta^*_{z^*_j} - \theta^*_b\|^2}\\
	&\lesssim \frac{k\tau}{n} + \frac{k}{\Delta}\sqrt{\frac{\tau}{n}} + \frac{kd\sqrt{\tau}}{n\Delta}. 
	\end{align*}
	\end{proof}

\begin{lemma}\label{lem:upperbound}
	With the same conditions as Theorem \ref{thm:main1}, for any sequence $\delta_n = o(1)$, we have
	\begin{align*}
	\xi_{\text{ideal}}(\delta_n) \leq n\ebr{-(1+o(1))\frac{\snr^2}{8}}.
	\end{align*}
	with probability at least $1 - n^{-C'} - \exp(-\snr)$.
\end{lemma}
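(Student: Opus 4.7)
The plan is to establish the bound in four steps: (i) introduce a simpler proxy statistic $\tilde\xi$ in which the hatted parameters $\hat\Sigma(z^*)$ and $\hat\theta_a(z^*)$ appearing in $\xi_{\text{ideal}}$ are replaced by their population counterparts $\Sigma^*$ and $\theta_a^*$; (ii) show a deterministic domination $\xi_{\text{ideal}}(\delta_n) \leq \tilde\xi(\delta')$ on a high-probability event for some $\delta' = \delta_n + o(1) = o(1)$; (iii) bound $\E[\tilde\xi(\delta')]$ via the standard Gaussian tail (the same ingredient underlying Lemma \ref{lem:lda}); and (iv) apply Markov's inequality to promote this expectation bound to a high-probability statement.

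For step (ii), define the good event $\mathcal{E}$ as the intersection of (a) the covariance and mean concentration inequalities from Section \ref{sec:technical} already used in the proof of Lemma \ref{lem:conditions}, yielding $\|\hat\Sigma(z^*)^{-1} - (\Sigma^*)^{-1}\| \preceq \sqrt{k(d+\log n)/n}$ and $\max_{a\in[k]}\|\hat\theta_a(z^*) - \theta_a^*\| \preceq \sqrt{k(d+\log n)/n}$, and (b) the union-bound control $\max_{j\in[n]}\|\epsilon_j\|^2 \preceq d + \log n$ for Gaussian noise. The key algebraic step decomposes the discriminant as
\begin{align*}
\iprod{\epsilon_j}{\hat\Sigma(z^*)^{-1}(\hat\theta_a(z^*) - \hat\theta_b(z^*))} = \iprod{\epsilon_j}{(\Sigma^*)^{-1}(\theta_a^* - \theta_b^*)} + R_{j,b},
\end{align*}
with $|R_{j,b}| \leq \|\epsilon_j\|\bigl(\|\hat\Sigma(z^*)^{-1} - (\Sigma^*)^{-1}\|\,\|\hat\theta_a - \hat\theta_b\| + \|(\Sigma^*)^{-1}\|(\|\hat\theta_a - \theta_a^*\| + \|\hat\theta_b - \theta_b^*\|)\bigr)$. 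Plugging in the rates above, together with $kd = O(\sqrt{n})$ and $\snr/k \to \infty$, shows $|R_{j,b}|$ is a vanishing multiple of $\sigma_{ab}^2 := (\theta_a^* - \theta_b^*)^T(\Sigma^*)^{-1}(\theta_a^* - \theta_b^*)$ uniformly in $(j,b)$; the original indicator then entails the simpler indicator in $\tilde\xi$ with a relaxed constant $\delta' = \delta_n + o(1)$.

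For step (iii), each summand of $\tilde\xi(\delta')$ involves the centered Gaussian $\iprod{\epsilon_j}{(\Sigma^*)^{-1}(\theta_a^* - \theta_b^*)} \sim \mathn(0, \sigma_{ab}^2)$, so its expectation is bounded by $\tfrac{1}{2}\exp(-(1-\delta')^2 \sigma_{ab}^2/8)$. The well-conditioning assumption $\lambda_d(\Sigma^*)/\lambda_1(\Sigma^*) = O(1)$ yields $\|\theta_a^*-\theta_b^*\|^2 \lesssim \sigma_{ab}^2$, and since $x \mapsto x e^{-cx}$ is decreasing on $(1/c,\infty)$, every summand is dominated by its value at $\sigma_{ab}^2 = \snr^2$, giving
\begin{align*}
\E[\tilde\xi(\delta')] \lesssim n\, k\, \snr^2 \exp\!\left(-\tfrac{(1-\delta')^2\,\snr^2}{8}\right) = n\, \exp\!\left(-(1-o(1))\tfrac{\snr^2}{8}\right),
\end{align*}
where the logarithmic factor $\log(k\snr^2)$ is absorbed into $o(1)$ using $\log k \leq k = o(\snr)$ and $\log\snr = o(\snr^2)$.

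Finally, step (iv) is a one-line Markov argument: setting $t := e^{\snr}\,\E[\tilde\xi(\delta')]$ gives $\p(\tilde\xi(\delta') \geq t) \leq e^{-\snr}$, and the extra factor $e^{\snr}$ is absorbed into $(1+o(1))$ since $\snr = o(\snr^2)$, so $t = n\exp(-(1+o(1))\snr^2/8)$. Combining with $\p(\mathcal{E}^c) \leq n^{-C'}$ delivers the claim. The main technical obstacle lies in step (ii): because $\hat\Sigma(z^*)$ depends on every $\epsilon_j$, the bound on $|R_{j,b}|$ must be deterministic on $\mathcal{E}$ and uniform over all $j\in[n]$ and $b\neq z^*_j$; it is exactly the union-bound control of $\max_j\|\epsilon_j\|^2$ at the scale $d+\log n$, combined with the condition $kd = O(\sqrt{n})$, that makes $|R_{j,b}|/\sigma_{ab}^2$ vanish simultaneously across all indices and licenses the replacement $\delta_n \mapsto \delta'$.
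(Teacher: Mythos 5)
Your proof is correct, and its skeleton (reduce to a clean main term, bound its expectation by a Gaussian tail, finish with Markov) is the same as the paper's; the genuine difference is in how the remainder from replacing $(\hat\Sigma(z^*),\hat\theta_a(z^*))$ by $(\Sigma^*,\theta_a^*)$ is controlled. The paper does \emph{not} kill this remainder deterministically: it splits $\xi_{\text{ideal}}(\delta)$ into a main term $M_1$ plus error terms $M_2,M_3,M_4$, each a sum of indicators of the form $\indic{\text{remainder} \geq \bar\delta\cdot\text{signal}}$, and bounds $\E M_i$ directly via the $\chi^2$ tail of Lemma \ref{lem:chi-square}, tuning $\bar\delta$ (with $n^{-1/4}=o(\bar\delta)$) so that these expectations fall below the target rate $n\ebr{-(1+o(1))\snr^2/8}$; Markov is then applied to the whole sum. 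You instead work on a good event on which $\max_{j\in[n]}\|\epsilon_j\|^2\lesssim d+\log n$ holds simultaneously with \eqref{thetaazstartrue} and \eqref{Sigmahatzstartrue.inv}, so that $|R_{j,b}|$ is bounded \emph{pathwise and uniformly} by $o(1)\cdot\sigma_{ab}^2$ and $\xi_{\text{ideal}}(\delta_n)$ is dominated by the clean statistic $\tilde\xi(\delta')$ with a deterministic $\delta'=\delta_n+o(1)$; Markov is applied only to $\tilde\xi(\delta')$. Your route buys a cleaner argument — no auxiliary error indicators and no tuning of $\bar\delta$ against $\chi^2$ tails — at the price of a union bound over $j$ (the extra $\log n$ in $\max_j\|\epsilon_j\|^2$), which is harmless here since $(d+\log n)\sqrt{k/n}/\Delta=o(1)$ under $kd=O(\sqrt n)$ and $\snr/k\to\infty$; the paper's in-expectation treatment avoids that union bound. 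Both arguments are valid under the stated assumptions, and your bookkeeping for the expectation of the main term ($\log(k\,\snr^2)=o(\snr^2)$, monotonicity of $x\mapsto xe^{-cx}$) and for the Markov step ($\snr=o(\snr^2)$) is sound.
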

\begin{proof}
Under the conditions of Theorem \ref{thm:main1}, the inequalities \eqref{ine:lem:miscel.1}-\eqref{ine:keyine} hold with probability at least $1-n^{-C'}$. In the remaining proof, we will work on the event these inequalities hold.
 	Recall the definition of $\xi_{\text{ideal}}$, we can write
	\begin{align*}
	\xi_{\text{ideal}}(\delta) &= \sum_{j=1}^{n}\sum_{b\in[k]\setminus\{\zjstar\}}\|\thetazjstarstar - \thetabstar\|^2\mathbb{I}\left\{\langle \epsilon_j, (\Sigmahatzstar)^{-1}(\thetahatzjstarzstar - \thetahatbzstar)\rangle \notag
	\leq  -\frac{1-\delta}{2}\langle \thetazjstarstar - \thetabstar, (\Sigmastar)^{-1}(\thetazjstarstar - \thetabstar)\rangle\right\}\\
	& \leq \sum_{j=1}^{n}\sum_{b\in[k]\setminus\{\zjstar\}}\|\thetazjstarstar - \thetabstar\|^2\mathbb{I}\left\{\langle \epsilon_j, (\Sigmastar)^{-1}(\thetazjstarstar - \thetabstar)\rangle\leq  -\frac{1-\delta-\bar{\delta}}{2}\langle \thetazjstarstar - \thetabstar, (\Sigmastar)^{-1}(\thetazjstarstar - \thetabstar)\rangle\right\}\\
	&+ \sum_{j=1}^{n}\sum_{b\in[k]\setminus\{\zjstar\}}\|\thetazjstarstar - \thetabstar\|^2\mathbb{I}\left\{\langle \epsilon_j, ((\Sigmahatzstar)^{-1} - (\Sigmastar)^{-1})(\thetazjstarstar - \thetabstar) \rangle \leq -\frac{\bar{\delta}}{6}\langle \thetazjstarstar - \thetabstar, (\Sigmastar)^{-1}(\thetazjstarstar - \thetabstar)\rangle\right\}\\
	&+\sum_{j=1}^{n}\sum_{b\in[k]\setminus\{\zjstar\}}\|\thetazjstarstar - \thetabstar\|^2\mathbb{I}\left\{\langle \epsilon_j, (\Sigmahatzstar)^{-1}(\thetahatzjstarzstar - \thetazjstarstar)\rangle \leq -\frac{\bar{\delta}}{6}\langle \thetazjstarstar - \thetabstar, (\Sigmastar)^{-1}(\thetazjstarstar - \thetabstar)\rangle\right\}\\
	&+ \sum_{j=1}^{n}\sum_{b\in[k]\setminus\{\zjstar\}}\|\thetazjstarstar - \thetabstar\|^2\mathbb{I}\left\{-\langle \epsilon_j, (\Sigmahatzstar)^{-1}(\thetahatbzstar - \thetabstar)\rangle \leq -
	\frac{\bar{\delta}}{6}\langle \thetazjstarstar - \thetabstar, (\Sigmastar)^{-1}(\thetazjstarstar - \thetabstar)\rangle\right\}\\
	&=: M_1 + M_2+M_3+M_4.
	\end{align*} 
	where $\bar{\delta} = \bar{\delta}_n$ is some sequence to be chosen later. We bound the four terms respectively. Suppose $\epsilon_j = (\Sigmastar)^{1/2}\omega_j$, where $w_j \iid \mathn(0, I_d)$. By \eqref{Sigmahatzstartrue.inv}, we know
	\begin{align*}
	M_2 &\leq \sum_{j=1}^{n}\sum_{b\in[k]\setminus\{\zjstar\}}\|\thetazjstarstar - \thetabstar\|^2\mathbb{I}\left\{\frac{\bar{\delta}}{6\lambdamax}\|\thetazjstarstar - \thetabstar\|^2 \leq \lambdamax\|w_j\|\|(\Sigmahatzstar)^{-1} - (\Sigmastar)^{-1}\|\|\thetazjstarstar - \thetabstar\|\right\} \\
	&\leq \sum_{j=1}^{n}\sum_{b\in[k]\setminus\{\zjstar\}}\|\thetazjstarstar - \thetabstar\|^2\mathbb{I}\left\{C\bar{\delta}\|\thetazjstarstar - \thetabstar\|\sqrt{\frac{n}{d+\log n}}\leq \|w_j\|\right\} \\
	&\leq \sum_{j=1}^{n}\sum_{b\in[k]\setminus\{\zjstar\}}\|\thetazjstarstar - \thetabstar\|^2\mathbb{I}\left\{C\bar{\delta}^2\|\thetazjstarstar - \thetabstar\|^2\frac{n}{d+\log n} -2d\leq \|w_j\|^2-2d\right\},
	\end{align*}
	where $C$ is a constant which may differ line by line. Recall that $kd=O(\sqrt{n})$, $\min_{a\neq b}\|\theta^*_a-\theta^*_b\|\rightarrow\infty$, and $\Delta/k\rightarrow\infty$ by assumption. Let $n^{-\frac{1}{4}} =o(\bar \delta)$. Using the $\chi^2$ tail probability in Lemma \ref{lem:chi-square}, we have for any $a \neq b \in[k]$,
	\begin{align*}
	\E M_2 &\leq  \sum_{j=1}^{n}\sum_{b\in[k]\setminus\{\zjstar\}}\|\thetazjstarstar - \thetabstar\|^2 \ebr{-C\bar{\delta}^2\|\thetazjstarstar - \thetabstar\|^2 \sqrt{n}} \leq n\ebr{-(1+o(1))\frac{\snr^2}{8}}.
	\end{align*}
	We can obtain similar  bounds on  $M_3$ and $M_4$ by using \eqref{thetaazstartrue}.
For $M_1$, the Gaussian tail bound leads to the inequality
	\begin{align*}
	&\p\bigg\{ \langle \epsilon_j, (\Sigmastar)^{-1}(\thetaastar - \thetabstar)\rangle\leq  -\frac{1-\delta-\bar{\delta}}{2}\langle \thetaastar - \thetabstar, (\Sigmastar)^{-1}(\thetaastar - \thetabstar)\rangle\bigg\}\\
	=~& \p\bigg\{ \langle w_j, (\Sigmastar)^{-1/2}(\thetaastar - \thetabstar)\rangle\leq  -\frac{1-\delta-\bar{\delta}}{2}\langle \thetaastar - \thetabstar, (\Sigmastar)^{-1}(\thetaastar - \thetabstar)\rangle\bigg\}\\
	\leq~ & \ebr{-\frac{(1-\delta-\bar{\delta})^2}{8}\langle \thetaastar - \thetabstar, (\Sigmastar)^{-1}(\thetaastar - \thetabstar)\rangle}.
	\end{align*}
	Thus,
	\begin{align*}
	\E M_1 &\leq \sum_{j=1}^{n}\sum_{b\in[k]\setminus\{\zjstar\}}\|\thetazjstarstar - \thetabstar\|^2\ebr{-\frac{(1-\delta-\bar{\delta})^2}{8}\langle \thetaastar - \thetabstar, (\Sigmastar)^{-1}(\thetaastar - \thetabstar)\rangle}\\
	&\leq n\ebr{-(1+o(1))\frac{\snr^2}{8}}.
		\end{align*}
	Overall, we have $\E \xi_{\text{ideal}} \lesssim  n\ebr{-(1+o(1))\frac{\snr^2}{8}}$.
	By the Markov's inequality, we have 
	\begin{align*}
	\pbr{\xi_{\text{ideal}}(\delta_n) \geq \E\xi_{\text{ideal}}\exp(\snr)} \leq \exp(-\snr).
	\end{align*}
	In other words, with probability at least $1 - \exp(-\snr)$, we have
	\begin{align*}
	\xi_{\text{ideal}}(\delta_n) \leq \E\xi_{\text{ideal}}(\delta_n)\exp(\snr) \leq n\ebr{-(1+o(1))\frac{\snr^2}{8}}.
	\end{align*}
\end{proof}

\begin{proof}[Proof of Theorem \ref{thm:main1}]
By Lemmas \ref{lem:linear convergence} - \ref{lem:upperbound}, we have Conditions \ref{con:I1} - \ref{con:I5} satisfied
with probability at least $1-\eta - n^{-1} - \exp(-\snr)$. Then applying Lemma \ref{lem:linear convergence}, we have 
\begin{align*}
\ell(z^{(t)},z^*) \leq n\ebr{-(1+o(1))\frac{\snr^2}{8}} + \frac{1}{4}\ell(z^{(t-1)}, z^*),\quad \text{for all }t\geq1.
\end{align*}
By (\ref{eqn:delta_h}) and there exists a constant C such that $\Delta \leq C\snr$, we can conclude
\begin{align*}
h(z^{(t)},z^*) \leq \ebr{-(1+o(1))\frac{\snr^2}{8}} + 4^{-t}, \quad \text{for all }t\geq1.
\end{align*}
Notice that $h(z,z^*)$ takes value in the set $\{j/n:j\in [n]\cup\{0\}\}$, the term $4^{-t}$ in the above inequality should be negligible as long as $4^{-t} = o(n^{-1})$. Thus, we can claim
\begin{align*}
h(z^{(t)},z^*)\leq \ebr{-(1+o(1))\frac{\snr^2}{8}}, \quad \text{for all }t\geq \log n.
\end{align*}
\end{proof}

\section{Proofs in Section \ref{sec:hetero}}\label{subsect:proof.hetero}

\subsection{Proofs for The Lower Bound}

\begin{proof}[Proof of Lemma \ref{lem:qda}]
The Neyman-Pearson lemma tells us the likelihood ratio test $\phi$ is the optimal procedure. Following the proof of Lemma \ref{lem:lda}, we have
\begin{align*}
\p_{\mathbb{H}_0} \br{ \phi = 1} +\p_{\mathbb{H}_1} \br{ \phi = 0}  &= \pbr{\epsilon\in \mathcal{B}_{1,2}} + \pbr{\epsilon\in \mathcal{B}_{2,1}} \\
& \geq \ebr{-\frac{1+o(1)}{8}\snr_{1,2}^{'2}} + \ebr{-\frac{1+o(1)}{8}\snr_{2,1}^{'2}},
\end{align*}
where the last inequality is by Lemma \ref{lem:2}.
\end{proof}

\begin{proof}[Proof of Theorem \ref{thm:lower2}]
The proof is identical to the proof of Theorem \ref{thm:lower1} and is omitted here.
\end{proof}

\subsection{Proofs for The Upper Bound}
We adopt a similar proof idea as in Section \ref{sec:sec2upper}. We first present an error decomposition for  one-step analysis for Algorithm \ref{alg:2}. In Lemma \ref{lem:II_one_step}, we show the loss decays after a one-step iteration under Conditions \ref{con:II_1} - \ref{con:II_6}. Then in Lemma \ref{lem:linear convergence.2} we extend the result to multiple iterations, under two extra Conditions \ref{con:II_7} - \ref{con:II_8}. Last we show all the conditions are satisfied with high probability and thus prove  Theorem \ref{thm:main2}.

In the statement of Theorem \ref{thm:main2}, we assume $\max_{a,b\in[k]}\lambda_{d}(\Sigma^*_a) / \lambda_{1}(\Sigma^*_b)=O(1)$ for
the covariance matrix $\{\Sigma_a^*\}_{a\in[k]}$. Without loss of generality, we can replace it by assuming $\Sigma^*$  satisfies
\begin{align}\label{eqn:Sigma_assumption_2}
\lambda_{\min}\leq \min_{a\in[k]}\lambda_1(\Sigma^*_a)\leq \max_{a\in[k]}\lambda_d(\Sigma^*_a)\leq \lambda_{\max}
\end{align}
where $\lambda_{\min}, \lambda_{\max}>0$ are two constants.  The is due to the scaling properties of the normal distributions. The reasoning is the same as that of (\ref{eqn:Sigma_assumption}) for Model 1 and hence is omitted here. In the remaining of this section, we will assume (\ref{eqn:Sigma_assumption_2}) holds for the covariance matrices.

\paragraph{Error Decomposition for the One-step Analysis:}
Consider an arbitrary $z\in[k]^n$. Apply (\ref{eqn:2_theta}), (\ref{eqn:2_Sigma}), and (\ref{eqn:2_z}) on $z$ to obtain $\{\hat\theta_a(z)\}_{a\in[k]}$, $\{\hat\Sigma_a(z)\}_{a\in[k]}$, and $\hat z(z)$:
\begin{align*}
 \hat\theta_a(z) &= \frac{\sum_{j\in[n]} Y_j\indic{z_j=a}}{\sum_{j\in[n]}\indic{z_j=a}},\\
  \hat\Sigma_a(z)  &=  \frac{\sum_{j\in[n]} (Y_j -   \hat\theta_a(z)) (Y_j -   \hat\theta_a(z))^T \indic{z_j=a}}{\sum_{j\in[n]}\indic{z_j=a}},\\
  \hat z_j(z) & = \argmin_{a\in[k]} (Y_j -  \hat\theta_a(z) )^T (\hat\Sigma_a(a))^{-1}(Y_j -  \hat\theta_a(z) ) + \log |\hat\Sigma_a(z)|,\quad j\in[n].
\end{align*}
For simplicity we denote $\hat z$ short for $\hat z(z)$. Let $j\in[n]$ be an arbitrary index with 
 $\zjstar = a$. According to (\ref{eqn:1_z}), $z^*_j$ will be incorrectly estimated after on iteration in $\hat z$ if $a\neq \argmin_{a\in[k]} (Y_j -  \hat\theta_a(z) )^T (\hat\Sigma_a(a))^{-1}(Y_j -  \hat\theta_a(z) ) + \log |\hat\Sigma_a(z)|,$. That is, it is important to analyze the event
\begin{align}
\label{Discriminate.2}
\langle Y_j - \thetahatbz, (\Sigmahatbz)^{-1}(Y_j - \thetahatbz)\rangle + \log|\Sigmahatbz| \leq \langle Y_j - \thetahataz, (\Sigmahataz)^{-1}(Y_j - \thetahataz)\rangle + \log |\Sigmahataz|,
\end{align}
for any $b \in [k] \setminus \{a\} $. After some rearrangements, we can see (\ref{Discriminate.2}) is equivalent to,
\begin{align*}
&\langle \epsilon_j, (\Sigmahatbzstar)^{-1}(\thetaastar - \thetahatbzstar)\rangle- \langle \epsilon_j, (\Sigmahatazstar)^{-1}(\thetaastar - \thetahatazstar)\rangle \notag\\
+~ &\frac{1}{2}\langle \epsilon_j, ((\Sigmahatbzstar)^{-1} - (\Sigmahatazstar)^{-1})\epsilon_j \rangle - \frac{1}{2}\log|\Sigmaastar| + \frac{1}{2}\log|\Sigmabstar|\\
\leq  -~&\frac{1}{2}\langle \thetaastar - \thetabstar, (\Sigmabstar)^{-1}(\thetaastar - \thetabstar)\rangle\\
+~  &F_j(a,b,z)  + Q_j(a,b,z)+ G_j(a,b,z) + H_j(a,b,z)+K_j(a,b,z)+L_j(a,b,z),
\end{align*}
where
\begin{align*}
F_j(a,b,z) &= \langle \epsilon_j, (\Sigmahatbz)^{-1}(\thetahatbz - \thetahatbzstar) \rangle - \langle \epsilon_j, (\Sigmahataz)^{-1}(\thetahataz - \thetahatazstar) \rangle\\
& - \langle \epsilon_j, ((\Sigmahatbz)^{-1} - (\Sigmahatbzstar)^{-1})(\thetaastar - \thetahatbzstar) \rangle\\
& + \langle \epsilon_j, ((\Sigmahataz)^{-1} - (\Sigmahatazstar)^{-1})(\thetaastar - \thetahatazstar) \rangle,
\end{align*}
\begin{align*}
Q_j(a,b,z) = -\frac{1}{2}\langle \epsilon_j, ((\Sigmahatbz)^{-1} - (\Sigmahatbzstar)^{-1})\epsilon_j \rangle + \frac{1}{2}\langle \epsilon_j, ((\Sigmahataz)^{-1} - (\Sigmahatazstar)^{-1})\epsilon_j \rangle,
\end{align*}
\begin{align*}
G_j(a,b,z) &= \frac{1}{2}\langle \thetaastar-\thetahataz, (\Sigmahataz)^{-1}(\thetaastar-\thetahataz) \rangle - \frac{1}{2}\langle \thetaastar-\thetahatazstar, (\Sigmahataz)^{-1}(\thetaastar-\thetahatazstar) \rangle\\
& + \frac{1}{2}\langle \thetaastar-\thetahatazstar, (\Sigmahataz)^{-1}(\thetaastar-\thetahatazstar) \rangle - \frac{1}{2}\langle \thetaastar-\thetahatazstar, (\Sigmahatazstar)^{-1}(\thetaastar-\thetahatazstar) \rangle\\
& - \frac{1}{2}\langle \thetaastar-\thetahatbz, (\Sigmahatbz)^{-1}(\thetaastar-\thetahatbz) \rangle +\frac{1}{2} \langle \thetaastar-\thetahatbzstar, (\Sigmahatbz)^{-1}(\thetaastar-\thetahatbzstar) \rangle\\
&- \frac{1}{2}\langle \thetaastar-\thetahatbzstar, (\Sigmahatbz)^{-1}(\thetaastar-\thetahatbzstar) \rangle +\frac{1}{2} \langle \thetaastar-\thetahatbzstar, (\Sigmahatbzstar)^{-1}(\thetaastar-\thetahatbzstar) \rangle,
\end{align*}
\begin{align*}
H_j(a,b,z) = &-\frac{1}{2}\langle \thetaastar - \thetahatbzstar, (\Sigmahatbzstar)^{-1}(\thetaastar - \thetahatbzstar) \rangle + \frac{1}{2}\langle \thetaastar - \thetabstar, (\Sigmahatbzstar)^{-1}(\thetaastar - \thetabstar) \rangle\\
&-\frac{1}{2}\langle \thetaastar - \thetabstar, (\Sigmahatbzstar)^{-1}(\thetaastar - \thetabstar)\rangle + \frac{1}{2}\langle \thetaastar - \thetabstar, (\Sigmabstar)^{-1}(\thetaastar - \thetabstar)\rangle\\
& + \frac{1}{2}\langle \thetaastar - \thetahatazstar, (\Sigmahatazstar)^{-1}(\thetaastar - \thetahatazstar) \rangle,
\end{align*}
\begin{align*}
K_j(a,b,z):= \frac{1}{2}(\log|\Sigmahataz |- \log|\Sigmahatazstar|) - \frac{1}{2}(\log|\Sigmahatbz |- \log|\Sigmahatbzstar|),
\end{align*}
and
\begin{align*}
L_j(a,b,z):= \frac{1}{2}(\log|\Sigmahatazstar| - \log|\Sigmaastar|) - \frac{1}{2}(\log|\Sigmahatbzstar| - \log|\Sigmabstar|).
\end{align*}	
Among these terms,  $F_j,G_j, H_j$ are nearly identical to their counterparts in Section \ref{sec:sec2upper} with $\hat \Sigma(z)$ replaced by $\hat \Sigma_a(z)$ or $\hat \Sigma_b(z)$. There are three extra terms not appearing in Section \ref{sec:sec2upper}: $Q_j$ is a quadratic term of $\epsilon_j$ and  $K_j,L_j$ are terms involving matrix determinants.

\paragraph{Conditions and Guarantees for One-step Analysis.} 
To establish the guarantee for the one-step analysis, we first give several conditions on the error terms.

\begin{condition}\label{con:II_1}
	Assume that
	\begin{align*}
	\max_{\{z:\ell(z,z^\ast) \leq \tau\}}\max_{j \in [n]}\max_{b\in[k]\setminus\{\zjstar\}} \frac{|H_j(\zjstar,b,z)|}{\langle \theta_{\zjstar}^*- \thetabstar, (\Sigmabstar)^{-1}(\theta_{\zjstar}^* - \thetabstar) \rangle} \leq \frac{\delta}{12}
	\end{align*}
	holds with probability with at least $1-\eta_1$ for some $\tau, \delta, \eta_1 > 0$.	
\end{condition}
\begin{condition}
	Assume that
	\begin{align*}
	\max_{\{z:\ell(z,z^\ast) \leq \tau\}}\sum_{j=1}^{n}\max_{b\in[k]\setminus\{\zjstar\}}\frac{F_j(\zjstar,b,z)^2\|\thetazjstarstar - \thetabstar\|^2}{\langle \thetazjstarstar - \thetabstar, (\Sigmabstar)^{-1}(\thetazjstarstar - \thetabstar) \rangle^2\ell(z,z^\ast)} \leq \frac{\delta^2}{288}
	\end{align*}
	holds with probability with at least $1-\eta_2$ for some $\tau, \delta, \eta_2 > 0$.	
\end{condition}
\begin{condition}
	Assume that 
	\begin{align*}
	\max_{\{z:\ell(z,z^\ast) \leq \tau\}}\max_{j \in [n]}\max_{b\in[k]\setminus\{\zjstar\}}\frac{|G_j(\zjstar,b,z)|}{\langle \thetazjstarstar - \thetabstar, (\Sigmabstar)^{-1}(\thetazjstarstar - \thetabstar) \rangle} \leq \frac{\delta}{12}
	\end{align*}
	holds with probability with at least $1-\eta_3$ for some $\tau, \delta, \eta_3 > 0$.
\end{condition}
\begin{condition}
	Assume that
	\begin{align*}
	\max_{\{z:\ell(z,z^\ast) \leq \tau\}}\sum_{j=1}^{n}\max_{b\in[k]\setminus\{\zjstar\}}\frac{Q_j(\zjstar,b,z)^2\|\thetazjstarstar - \thetabstar\|^2}{\langle \thetazjstarstar - \thetabstar, (\Sigmabstar)^{-1}(\thetazjstarstar - \thetabstar) \rangle^2\ell(z,z^\ast)} \leq \frac{\delta^2}{288}
	\end{align*}
	holds with probability with at least $1-\eta_4$ for some $\tau, \delta, \eta_4 > 0$.	
\end{condition}
\begin{condition}
	Assume that
	\begin{align*}
	\max_{\{z:\ell(z,z^\ast) \leq \tau\}}\sum_{j=1}^{n}\max_{b\in[k]\setminus\{\zjstar\}}\frac{K_j(\zjstar,b,z)^2\|\thetazjstarstar - \thetabstar\|^2}{\langle \thetazjstarstar - \thetabstar, (\Sigmabstar)^{-1}(\thetazjstarstar - \thetabstar) \rangle^2\ell(z,z^\ast)} \leq \frac{\delta^2}{288}
	\end{align*}
	holds with probability with at least $1-\eta_5$ for some $\tau, \delta, \eta_5 > 0$.	
\end{condition}
\begin{condition}\label{con:II_6}
	Assume that 
	\begin{align*}
	\max_{\{z:\ell(z,z^\ast) \leq \tau\}}\max_{j \in [n]}\max_{b\in[k]\setminus\{\zjstar\}}\frac{|L_j(\zjstar,b,z)|}{\langle \thetazjstarstar - \thetabstar, (\Sigmabstar)^{-1}(\thetazjstarstar - \thetabstar) \rangle} \leq \frac{\delta}{12}
	\end{align*}
	holds with probability with at least $1-\eta_6$ for some $\tau, \delta, \eta_6 > 0$.
\end{condition}
We next define a quantity that refers to as the ideal error,
\begin{align*}
&\xi_{\text{ideal}}(\delta) = \sum_{j=1}^{p}\sum_{b\in[k]\setminus\{\zjstar\}}\|\theta^*_{\zjstar} - \thetabstar\|^2\mathbb{I}\{\langle \epsilon_j, (\Sigmahatbzstar)^{-1}(\thetaastar - \thetahatbzstar) \rangle - \langle \epsilon_j, (\Sigmahatazstar)^{-1}(\thetaastar - \thetahatazstar) \rangle \\
&+ \frac{1}{2}\langle \epsilon_j, ((\Sigmahatbzstar)^{-1} - (\Sigmahatazstar)^{-1})\epsilon_j \rangle -\frac{1}{2}\log|\Sigmaastar| + \frac{1}{2}\log|\Sigmabstar|\leq -\frac{1-\delta}{2}\langle \thetaastar - \thetabstar,  (\Sigmabstar)^{-1}(\thetaastar - \thetabstar)\rangle\}.
\end{align*}
\begin{lemma}\label{lem:II_one_step}
Assumes Conditions \ref{con:II_1} - \ref{con:II_6} hold for some $\tau,\delta, \eta_1,\ldots, \eta_6>0$. We then have 
	\begin{align*}
	\pbr{\ell(\hat z,z^*) \leq 2\xi_{\text{ideal}}(\delta) + \frac{1}{4}\ell(z, z^*)\text{ for any $z\in[k]^n$ such that $\ell(z,z^*)\leq \tau$}} \geq 1-\eta,
	\end{align*}
	 where $\eta = \sum_{i=1}^{6}\eta_i$.
\end{lemma}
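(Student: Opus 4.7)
The plan is to mirror the proof of Lemma~\ref{lem:I_one_step}, now tracking the six-term decomposition $F_j,Q_j,G_j,H_j,K_j,L_j$ specific to Model~2.

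First I would start from $\mathbb{I}\{\hat z_j=b\}\le \mathbb{I}\{(\ref{Discriminate.2}) \text{ holds}\}$ and rewrite the right-hand side via the error decomposition displayed just before Condition~\ref{con:II_1}. Splitting according to whether the residual sum $F_j+Q_j+G_j+H_j+K_j+L_j$ exceeds $\tfrac{\delta}{2}\langle\theta^*_a-\theta^*_b,(\Sigma^*_b)^{-1}(\theta^*_a-\theta^*_b)\rangle$, the complementary event is exactly the $(j,b)$-summand appearing in the definition of $\xi_{\text{ideal}}(\delta)$; this contribution, summed over $j$ and $b\neq z_j^\ast$, yields the $2\xi_{\text{ideal}}(\delta)$ piece of the final bound (the factor $2$ absorbs the slack introduced by passing from the strict to the relaxed version of the inequality).

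Second I would handle the residual indicator $\mathbb{I}\{F_j+Q_j+G_j+H_j+K_j+L_j\ge \tfrac{\delta}{2}\langle\cdots\rangle\}$. Applying the three pointwise-controlled conditions (Condition~\ref{con:II_1} on $H_j$, the subsequent condition on $G_j$, and Condition~\ref{con:II_6} on $L_j$, each bounding the corresponding term by $\tfrac{\delta}{12}\langle\cdots\rangle$ in absolute value) reduces the event to $\{|F_j|+|Q_j|+|K_j|\ge \tfrac{\delta}{4}\langle\cdots\rangle\}$. A pigeonhole split then dominates this indicator by $\sum_{T\in\{F_j,Q_j,K_j\}}\mathbb{I}\{|T|\ge \tfrac{\delta}{12}\langle\cdots\rangle\}$, and each summand is replaced by the Markov-type bound $144\,T^2/(\delta^2\langle\cdots\rangle^2)$. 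Multiplying by $\|\theta^*_{z_j^\ast}-\theta^*_b\|^2$, summing over $j$ and $b\neq z_j^\ast$, and invoking the three $\ell^2$-type conditions for $F_j$, $Q_j$, and $K_j$ bounds this residual contribution by $\tfrac{1}{4}\ell(z,z^*)$. A union bound over the six high-probability events gives total failure probability at most $\sum_{i=1}^{6}\eta_i$.

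The main obstacle, relative to Model~1, is the presence of the quadratic-in-$\epsilon_j$ term $Q_j$ and the log-determinant terms $K_j$ and $L_j$, none of which appear in the homogeneous decomposition; in particular, $Q_j=\tfrac{1}{2}\epsilon_j^\top\bigl((\hat\Sigma_b(z))^{-1}-(\hat\Sigma_b(z^*))^{-1}-(\hat\Sigma_a(z))^{-1}+(\hat\Sigma_a(z^*))^{-1}\bigr)\epsilon_j$ is genuinely quadratic in the noise and cannot be controlled purely by Gaussian linear tail bounds. Within this lemma itself, however, each of $Q_j,K_j,L_j$ enters only symbolically through its assigned condition, so the proof is a routine six-term extension of Lemma~\ref{lem:I_one_step}; the substantive concentration work for $Q_j,K_j,L_j$, which will require Hanson--Wright-type control of Gaussian quadratic forms combined with perturbation bounds for the estimated inverse covariances, is deferred to the lemma in which these conditions are verified with high probability.
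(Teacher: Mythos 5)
Your proposal matches the paper's proof, which is stated only as a one-line remark: repeat the argument of Lemma \ref{lem:I_one_step}, treating the extra terms $Q_j$ and $K_j$ exactly like $F_j$ (via the squared/Markov-type bound against their summed conditions) and $L_j$ exactly like $H_j$ (via its pointwise condition). The only caveat is constant bookkeeping — your pigeonhole at level $\delta/12$ combined with the $\delta^2/288$ conditions yields a residual of $\tfrac{3}{2}\ell(z,z^*)$ rather than $\tfrac{1}{4}\ell(z,z^*)$ — but since Lemma \ref{lem:conditions.2} verifies all these conditions with $o(1)$ bounds, the constants in the conditions can be rescaled freely and this does not affect the substance (the paper itself is loose here, stating $\tfrac{1}{2}\ell$ in Lemma \ref{lem:linear convergence.2}).
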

\begin{proof}
The proof of this lemma is quite similar to the proof of Lemma \ref{lem:I_one_step}. The additional terms $Q_j$ and $K_j$ can be dealt with the same way as $F_j$ while $L_j$ can be dealt with the same way as $H_j$. We omit the details here.
\end{proof}

\paragraph{Conditions and Guarantees for Multiple Iterations.} 
In the above we establish a statistical guarantee for the one-step analysis. Now we will extend the result to multiple iterations. That is, starting from some initialization $z^{(0)}$, we will characterize how the losses $\ell(z^{(0)},z^*)$, $\ell(z^{(1)},z^*)$, $\ell(z^{(2)},z^*)$, \ldots, decay.
We impose a condition on $\xi_{\text{ideal}}(\delta)$ and a condition for $z^{(0)}$.

\begin{condition}\label{con:II_7}
	Assume that
	\begin{align*}
	\xi_{\text{ideal}}(\delta) \leq \frac{\tau}{2}
	\end{align*}
	holds with probability with at least $1-\eta_7$ for some $\tau, \delta, \eta_7 > 0$.
\end{condition}
Finally, we need a condition on the initialization.
\begin{condition}\label{con:II_8}
	Assume that
	\begin{align*}
	\ell(z^{(0)}, z^*) \leq \tau
	\end{align*}
	holds with probability with at least $1-\eta_8$ for some $\tau,  \eta_8 > 0$.
\end{condition}
With these conditions satisfied, we can give a lemma that shows the linear convergence guarantee for our algorithm.

\begin{lemma}\label{lem:linear convergence.2}
	Assumes Conditions \ref{con:II_1} - \ref{con:II_8} hold for some $\tau,\delta, \eta_1,\cdots,\eta_8>0$. We then have 
	\begin{align*}
	\ell(z^{(t)},z^*) \leq \xi_{\text{ideal}}(\delta) + \frac{1}{2}\ell(z^{(t-1)}, z^*)
	\end{align*}
	for all $t \geq 1$, with probability at least $1-\eta$, where $\eta = \sum_{i=1}^{8}\eta_i$.
\end{lemma}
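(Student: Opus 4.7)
The plan is to mirror the proof of Lemma \ref{lem:linear convergence} from Section \ref{sec:sec2upper}, treating Lemma \ref{lem:II_one_step} as a black-box one-step contraction and using Conditions \ref{con:II_7}--\ref{con:II_8} to propagate the invariant $\ell(z^{(t)}, z^\ast) \leq \tau$ across iterations. The argument is a straightforward induction on $t$, with a union bound at the end to aggregate the failure probabilities of the eight conditions.

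First, I would work on the intersection of the events underlying Conditions \ref{con:II_1}--\ref{con:II_8}, which has probability at least $1 - \sum_{i=1}^{8} \eta_i = 1 - \eta$ by a union bound. For the base case $t = 0$, Condition \ref{con:II_8} directly supplies $\ell(z^{(0)}, z^\ast) \leq \tau$, which is exactly the hypothesis on $z$ required to invoke Lemma \ref{lem:II_one_step}.

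For the inductive step, assume $\ell(z^{(t-1)}, z^\ast) \leq \tau$. Then Lemma \ref{lem:II_one_step} (whose hypotheses hold by Conditions \ref{con:II_1}--\ref{con:II_6}) applies with $z = z^{(t-1)}$ and $\hat z = z^{(t)}$, yielding the desired recursion relating $\ell(z^{(t)}, z^\ast)$ to $\xi_{\text{ideal}}(\delta)$ and $\ell(z^{(t-1)}, z^\ast)$. To close the induction I must check that $\ell(z^{(t)}, z^\ast) \leq \tau$ still holds; this follows by plugging the inductive hypothesis $\ell(z^{(t-1)}, z^\ast) \leq \tau$ and Condition \ref{con:II_7}'s bound $\xi_{\text{ideal}}(\delta) \leq \tau/2$ into the recursion, giving $\ell(z^{(t)}, z^\ast) \leq \tau$ and allowing the argument to be re-applied at the next iteration.

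There is no genuine obstacle here, since all the hard work — the error decomposition of the update map and the control of the residual pieces $F_j, Q_j, G_j, H_j, K_j, L_j$ — has already been absorbed into Lemma \ref{lem:II_one_step}. The only mild subtlety worth flagging is that Conditions \ref{con:II_1}--\ref{con:II_6} are stated as uniform bounds over all $z$ with $\ell(z, z^\ast) \leq \tau$, not just at a single deterministic point, which is precisely what makes the inductive re-application legitimate: the random sequence $z^{(0)}, z^{(1)}, \ldots$ stays inside the good set on which the one-step analysis holds. Hence the lemma follows.
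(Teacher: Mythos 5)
Your proposal is correct and is essentially the paper's own argument: the paper proves this lemma by exactly the induction you describe, using Condition \ref{con:II_8} for the base case, Lemma \ref{lem:II_one_step} as the one-step contraction, and Condition \ref{con:II_7} to preserve the invariant $\ell(z^{(t)},z^*)\le\tau$ across iterations. One wrinkle worth flagging (an inconsistency in the paper's constants rather than in your strategy): Lemma \ref{lem:II_one_step} gives $2\xi_{\text{ideal}}(\delta)+\frac{1}{4}\ell$, so plugging in $\xi_{\text{ideal}}(\delta)\le\tau/2$ yields $5\tau/4$ rather than $\tau$; closing the induction literally requires either the sharper bound $\xi_{\text{ideal}}(\delta)\le 3\tau/8$ (as in Condition \ref{con:I4} for Model 1) or the recursion $\xi_{\text{ideal}}(\delta)+\frac{1}{2}\ell$ that appears in the lemma's conclusion.
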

\begin{proof}
	The proof of this lemma is the same as the proof of Lemma \ref{lem:linear convergence}. 
\end{proof}

\paragraph{With-high-probability Results for the Conditions and The Proof of The Main Theorem.} Recall the definition of $\Delta$ in (\ref{eqn:delta}).
Lemma \ref{lem:sameorder} shows $\snr'$ is in the same order with $\Delta$, which will play a similar role as (\ref{eqn:snr_delta_1}) in Section \ref{sec:sec2upper}. It immediately implies  the assumption $\snr'\rightarrow\infty$ in the statement of Theorem \ref{thm:main2} is equivalently $\Delta \rightarrow\infty$.
The proof of Lemma \ref{lem:sameorder} is deferred to Section \ref{sec:technical}.
\begin{lemma}\label{lem:sameorder}
	Assume $\snr' \to \infty$ and $d = O(1)$. Further assume there exist constants $\lambdamin , \lambdamax>0 $ such that  $\lambdamin \leq \lambda_{1}(\Sigma_a^*) \leq  \lambda_{d}(\Sigma_a^*)\leq \lambdamax$ for any $a\in[k]$. Then, there exist constants $C_1,C_0>0$ only depending on $\lambdamin,\lambdamax,d$ such that
	\begin{align*}
	C_1 \norm{\thetaastar - \thetabstar}\leq \snr'_{a,b} \leq C_2 \norm{\thetaastar - \thetabstar},
	\end{align*}
	for any $a \neq b \in [k]$. As a result, $\snr'$ is in the same order of $\Delta$.
\end{lemma}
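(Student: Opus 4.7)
The plan is to view $\snr'_{a,b}/2$ as the Euclidean distance from the origin to the sublevel set $\mathcal{B}_{a,b}$ and sandwich it by $\|\Delta_{ab}\|$, where $\Delta_{ab}:=\theta_a^*-\theta_b^*$. Setting $v:=\Sigma_a^{*1/2}\Sigma_b^{*-1}\Delta_{ab}$, $M:=\Sigma_a^{*1/2}\Sigma_b^{*-1}\Sigma_a^{*1/2}$, $D:=\Delta_{ab}^T\Sigma_b^{*-1}\Delta_{ab}$, and $R:=\tfrac{1}{2}\log(|\Sigma_a^*|/|\Sigma_b^*|)$, the defining inequality of $\mathcal{B}_{a,b}$ rewrites as
\begin{equation*}
x^T v+\tfrac{1}{2}x^T(M-I_d)x\leq -D/2+R.
\end{equation*}
Under $d=O(1)$ and the spectral bounds on $\Sigma_a^*,\Sigma_b^*$, there is a constant $C=C(\lambdamin,\lambdamax,d)$ such that $\|v\|\leq C\|\Delta_{ab}\|$, $\|M-I_d\|\leq C$, $D\geq \|\Delta_{ab}\|^2/\lambdamax$, and $|R|\leq C$. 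These four estimates will drive both directions.

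For the lower bound, applying Cauchy--Schwarz and the operator-norm bound on $M-I_d$ to the defining inequality yields, for every $x\in\mathcal{B}_{a,b}$,
\begin{equation*}
\|v\|\,\|x\|+\tfrac{C}{2}\|x\|^2\geq D/2-R.
\end{equation*}
Once $\|\Delta_{ab}\|$ exceeds a constant threshold depending only on $\lambdamin,\lambdamax,d$ so that $D/2-R\gtrsim\|\Delta_{ab}\|^2$, I will split into the cases $\|x\|\leq\|\Delta_{ab}\|$ and $\|x\|>\|\Delta_{ab}\|$: the latter is immediate, while in the former both terms on the left are controlled by a constant multiple of $\|\Delta_{ab}\|\,\|x\|$, forcing $\|x\|\gtrsim\|\Delta_{ab}\|$. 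For the upper bound I will exhibit the explicit point $x^\dagger:=-\Sigma_a^{*-1/2}\Delta_{ab}$, which has $\|x^\dagger\|\leq\|\Delta_{ab}\|/\sqrt{\lambdamin}$. A direct computation gives $(x^\dagger)^T v=-D$ and $(x^\dagger)^T(M-I_d)x^\dagger=D-D_a$ with $D_a:=\Delta_{ab}^T\Sigma_a^{*-1}\Delta_{ab}$, reducing the constraint to $D_a+2R\geq 0$; since $D_a\geq \|\Delta_{ab}\|^2/\lambdamax$ and $|R|\leq C$, this holds once $\|\Delta_{ab}\|$ exceeds a similar threshold, placing $x^\dagger$ inside $\mathcal{B}_{a,b}$ and giving $\snr'_{a,b}\leq 2\|\Delta_{ab}\|/\sqrt{\lambdamin}$.

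To close the argument I still need to verify that $\|\Delta_{ab}\|$ actually exceeds the threshold under the hypothesis $\snr'\to\infty$. This will follow by a compactness/contradiction argument: if $\|\Delta_{ab}\|$ were bounded along some subsequence for some pair $(a,b)$, then the triple $(\Delta_{ab},\Sigma_a^*,\Sigma_b^*)$ would range in a compact set (since the covariances have bounded eigenvalues), and as $\snr'_{a,b}$ depends continuously on these parameters it would be bounded as well, contradicting $\snr'_{a,b}\to\infty$. Taking the minimum over pairs then translates the per-pair sandwich into the stated equivalence of $\snr'$ and $\Delta$. The main obstacle is exactly this small-$\|\Delta_{ab}\|$ regime of the upper bound: when the covariance determinants differ substantially (so $R$ is large and negative), the candidate $x^\dagger$ can leave $\mathcal{B}_{a,b}$, and only the global hypothesis $\snr'\to\infty$ together with compactness rules this out.
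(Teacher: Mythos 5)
Your lower-bound argument and your choice of the witness $x^\dagger=-(\Sigmaastar)^{-1/2}(\thetaastar-\thetabstar)$ are both correct and coincide with the paper's computations (the latter is exactly the paper's first scenario, where it takes $y=0$). The genuine gap is the step you yourself flag as the main obstacle: deducing from $\snr'\to\infty$ that $\norm{\thetaastar-\thetabstar}$ exceeds the constant threshold needed by both halves of your argument. Your compactness step rests on the unproven assertion that $\snr'_{a,b}$ ``depends continuously'' on $(\thetaastar-\thetabstar,\Sigmaastar,\Sigmabstar)$. What you actually need is upper semicontinuity (local boundedness from above) of the distance from the origin to the sublevel set $\mathcal{B}_{a,b}$, and this is false for general parametrized quadratic sublevel sets: in one dimension the set $\cbr{x: vx+\tfrac{\epsilon}{2}x^2\le c}$ with $c=-v^2/(2\epsilon)$ is the single point $\cbr{-v/\epsilon}$, whose distance to the origin blows up as $\epsilon\downarrow 0$ and jumps to $+\infty$ under an arbitrarily small decrease of $c$. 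Ruling out such behavior for $\mathcal{B}_{a,b}$ requires exploiting the specific coupling between your $M-I_d$, $v$, $D$ and $R$ --- which is precisely the content of an unconditional upper bound on $\snr'_{a,b}$ --- so the compactness step is circular as written, not a proof.

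The paper closes this loop differently and constructively: it first proves the \emph{unconditional} bound $\snr'_{a,b}\le \lambdamin^{-1/2}\norm{\thetaastar-\thetabstar}+\sqrt{3d/2}+\sqrt{d\log(\lambdamax/\lambdamin)}$, valid for every value of $\norm{\thetaastar-\thetabstar}$, via a three-case analysis on $\lambda_1\br{\Sigma_a^{*\frac{1}{2}}\Sigma_b^{*-1}\Sigma_a^{*\frac{1}{2}}-I_d}$. In the regime where your condition $D_a+2R\ge 0$ can fail (namely $\abs{\Sigma_a^*}<\abs{\Sigma_b^*}$ with small separation), it perturbs $x^\dagger$ by a vector of norm $O(\sqrt{d})$ along an eigendirection of $M-I_d$ with negative eigenvalue, using that $\log(\abs{\Sigma_b^*}/\abs{\Sigma_a^*})$ is controlled by $d$ times that eigenvalue. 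Since $d=O(1)$, the additive constant is harmless, and $\snr'\to\infty$ then immediately forces $\norm{\thetaastar-\thetabstar}\to\infty$, which in turn activates your lower bound and your $x^\dagger$ argument. To repair your proof, replace the compactness appeal with such an explicit witness in the case $D_a+2R<0$.
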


Lemma \ref{lem:conditions.2} and Lemma \ref{lem:upperbound.2} are  counterparts of Lemmas \ref{lem:conditions} and \ref{lem:upperbound} in Section \ref{sec:sec2upper}.
\begin{lemma}\label{lem:conditions.2}
	Under the same conditions as in Theorem \ref{thm:main2}, for any constant $C'>0$, there exists some constant $C>0$ only depending on $\alpha,C',\lambdamin,\lambdamax$ such that
	\begin{align}
	\max_{\{z:\ell(z,z^\ast) \leq \tau\}}\max_{j \in [n]}\max_{b\in[k]\setminus\{\zjstar\}} \frac{|H_j(\zjstar,b,z)|}{\langle \theta_{\zjstar}^*- \thetabstar, (\Sigmabstar)^{-1}(\theta_{\zjstar}^* - \thetabstar) \rangle} &\leq C\sqrt{\frac{k(d+\log n)}{n}}\label{Hcondi.2}\\
	\max_{\{z:\ell(z,z^\ast) \leq \tau\}}\sum_{j=1}^{n}\max_{b\in[k]\setminus\{\zjstar\}}\frac{F_j(\zjstar,b,z)^2\|\thetazjstarstar - \thetabstar\|^2}{\langle \thetazjstarstar - \thetabstar, (\Sigmabstar)^{-1}(\thetazjstarstar - \thetabstar) \rangle^2\ell(z,z^\ast)} &\leq Ck^3\br{\frac{\tau}{n}+\frac{1}{\Delta^2}+\frac{d^2}{n\Delta^2}}  \label{Fcondi.2}\\
	\max_{\{z:\ell(z,z^\ast) \leq \tau\}}\max_{j \in [n]}\max_{b\in[k]\setminus\{\zjstar\}}\frac{|G_j(\zjstar,b,z)|}{\langle \thetazjstarstar - \thetabstar, (\Sigmabstar)^{-1}(\thetazjstarstar - \thetabstar) \rangle} &\leq Ck\br{\frac{\tau}{n} + \frac{1}{\Delta}\sqrt{\frac{\tau}{n}} + \frac{d\sqrt{\tau}}{n\Delta}} \label{Gcondi.2}\\
	\max_{\{z:\ell(z,z^\ast) \leq \tau\}}\sum_{j=1}^{n}\max_{b\in[k]\setminus\{\zjstar\}}\frac{Q_j(\zjstar,b,z)^2\|\thetazjstarstar - \thetabstar\|^2}{\langle \thetazjstarstar - \thetabstar, (\Sigmabstar)^{-1}(\thetazjstarstar - \thetabstar) \rangle^2\ell(z,z^\ast)} &\leq  C\frac{k^3d^2}{\Delta^2}\br{\frac{\tau}{n} + \frac{1}{\Delta^2} + \frac{d^2}{n\Delta^2}} \label{Qcondi.2}\\
	\max_{\{z:\ell(z,z^\ast) \leq \tau\}}\sum_{j=1}^{n}\max_{b\in[k]\setminus\{\zjstar\}}\frac{K_j(\zjstar,b,z)^2\|\thetazjstarstar - \thetabstar\|^2}{\langle \thetazjstarstar - \thetabstar, (\Sigmabstar)^{-1}(\thetazjstarstar - \thetabstar) \rangle^2\ell(z,z^\ast)} &\leq C\frac{k^3d^2}{\Delta^2}\br{\frac{\tau}{n} + \frac{1}{\Delta^2} + \frac{d^2}{n\Delta^2}}   \label{Kcondi.2}\\
	\max_{\{z:\ell(z,z^\ast) \leq \tau\}}\max_{j \in [n]}\max_{b\in[k]\setminus\{\zjstar\}}\frac{|L_j(\zjstar,b,z)|}{\langle \thetazjstarstar - \thetabstar, (\Sigmabstar)^{-1}(\thetazjstarstar - \thetabstar) \rangle} &\leq C\frac{d}{\Delta^2}\sqrt{\frac{k(d+\log n)}{n}} \label{Lcondi.2}
	\end{align}
	with probability at least $1- n^{-C'}-\frac{4}{nd}$.
\end{lemma}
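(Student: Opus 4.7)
The plan is to mirror the proof of Lemma \ref{lem:conditions}, handling the three ``Model~1-style'' terms $H_j, F_j, G_j$ in essentially the same way and then treating the three genuinely new objects $Q_j, K_j, L_j$ with extra tools for quadratic forms in $\epsilon_j$ and for log-determinant perturbations. Throughout, I work on the intersection of the high-probability events of Section \ref{sec:technical} on which the bounds
\[
\max_{a} \|\hat{\Sigma}_a(z^*) - \Sigma_a^*\| \preceq \sqrt{\tfrac{k(d+\log n)}{n}}, \qquad \max_{a}\|\hat\theta_a(z^*)-\theta_a^*\|\preceq \sqrt{\tfrac{k(d+\log n)}{n}},
\]
the analogous $z$-to-$z^*$ perturbation bounds on $\hat\theta_a(z)-\hat\theta_a(z^*)$ and $\hat\Sigma_a(z)-\hat\Sigma_a(z^*)$, and the spectral bounds on $\sum_j \mathbb I\{z_j^*=a\}\epsilon_j\epsilon_j^T$ all hold simultaneously (this costs at most $n^{-C'}+O(1/(nd))$ probability). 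Under (\ref{eqn:Sigma_assumption_2}), all $(\Sigma_a^*)^{-1}$ and (on this event) all $(\hat\Sigma_a(z))^{-1}$ have operator norm $\asymp 1$, so denominators satisfy $\langle \theta^*_{z_j^*}-\theta_b^*,(\Sigma_b^*)^{-1}(\theta^*_{z_j^*}-\theta_b^*)\rangle \asymp \|\theta^*_{z_j^*}-\theta_b^*\|^2 \geq \Delta^2$.

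For \eqref{Hcondi.2}, \eqref{Fcondi.2}, \eqref{Gcondi.2}, I would literally repeat the calculations in the proof of Lemma \ref{lem:conditions}, replacing the shared $\hat\Sigma(z)$ estimates by the per-cluster $\hat\Sigma_a(z)$ and $\hat\Sigma_b(z)$ estimates. Because each $\hat\Sigma_a(z)$ is based on $\asymp n/k$ samples, the operator-norm perturbation bounds have the same form $\preceq \sqrt{k(d+\log n)/n}$ for the $z^*$ version and $\preceq (k/n)\ell(z,z^*)+(k/(n\Delta))\sqrt{n\ell(z,z^*)}+(kd/(n\Delta))\sqrt{\ell(z,z^*)}$ for the $z$ vs $z^*$ version, so the three resulting inequalities match those in Lemma \ref{lem:conditions}.

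The new ingredient is $Q_j(a,b,z)=\tfrac12 \langle \epsilon_j,((\hat\Sigma_a(z))^{-1}-(\hat\Sigma_a(z^*))^{-1})\epsilon_j\rangle - \tfrac12\langle \epsilon_j,((\hat\Sigma_b(z))^{-1}-(\hat\Sigma_b(z^*))^{-1})\epsilon_j\rangle$. I bound it by the operator norm trick
\[
|Q_j(a,b,z)| \;\leq\; \tfrac12 \|\epsilon_j\|^2\Bigl(\|(\hat\Sigma_a(z))^{-1}-(\hat\Sigma_a(z^*))^{-1}\|+\|(\hat\Sigma_b(z))^{-1}-(\hat\Sigma_b(z^*))^{-1}\|\Bigr),
\]
use the bound on the covariance difference listed above, and use $\max_j \|\epsilon_j\|^2 \preceq d+\log n$ from a chi-squared tail. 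Summing $Q_j^2\|\theta^*_{z_j^*}-\theta_b^*\|^2 /(\langle\cdot,\cdot\rangle^2\ell(z,z^*))$ over $j$ produces the factor $k^3 d^2/\Delta^2$ multiplied by $(\tau/n+1/\Delta^2+d^2/(n\Delta^2))$ as stated in \eqref{Qcondi.2}. The same strategy handles \eqref{Kcondi.2}: for any two positive-definite $A,B$ with $\|A^{-1}\|,\|B^{-1}\|\preceq 1$, the standard inequality
\[
|\log\det A - \log\det B|\;\leq\; \|A^{-1}\|\cdot d \cdot \|A-B\|
\]
(via $\log\det A-\log\det B=\int_0^1\mathrm{tr}((tA+(1-t)B)^{-1}(A-B))\,dt$) turns $K_j$ into a bound of the same shape as $Q_j$ but without the $\|\epsilon_j\|^2$ factor, yielding the same order. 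Finally \eqref{Lcondi.2} is $L_j$, which uses only the $z^*$-versions of the covariance perturbation and is thus controlled by $(d/\Delta^2)\sqrt{k(d+\log n)/n}$.

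The main obstacle is not any single step but rather the bookkeeping of the new quadratic and log-det terms $Q_j, K_j, L_j$ that have no Model~1 analog: one must make sure that (i) the quadratic form $\epsilon_j^T M_j \epsilon_j$ is handled by operator-norm/chi-squared control so as to avoid losing powers of $d$, and (ii) the log-determinant differences are reduced via the integral identity to an operator-norm difference of covariances, producing only a harmless factor of $d$. Once these two reductions are in place, all six bounds follow from the same concentration facts used for Model~1, applied per cluster, and the lemma is proved on the intersection event of probability $1-n^{-C'}-4/(nd)$.
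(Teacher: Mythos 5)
Your treatment of $H_j$, $F_j$, $G_j$ (repeat Lemma \ref{lem:conditions} per cluster), of $K_j$ (reduce the log-determinant difference to $d\,\|\hat\Sigma_a(z)-\hat\Sigma_a(z^*)\|$ — your integral identity is a clean alternative to the paper's Weyl-based eigenvalue comparison and gives the same order), and of $L_j$ all match the paper's argument. The gap is in $Q_j$. After the correct first step $|Q_j|\preceq \|\epsilon_j\|^2\max_a\|(\hat\Sigma_a(z))^{-1}-(\hat\Sigma_a(z^*))^{-1}\|$, you control the sum $\sum_j\|\epsilon_j\|^4$ by the uniform bound $\max_j\|\epsilon_j\|^2\preceq d+\log n$. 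That yields at best
\begin{align*}
\sum_{j=1}^n\|\epsilon_j\|^4 \;\preceq\; n\,(d+\log n)^2 \quad\text{(or } nd(d+\log n)\text{ if combined with } \textstyle\sum_j\|\epsilon_j\|^2\preceq nd\text{)},
\end{align*}
whereas \eqref{Qcondi.2} requires $\sum_j\|\epsilon_j\|^4\preceq nd^2$: the extra $(\log n)^2/d^2$ (or $\log n/d$) factor is not harmless, since under the theorem's assumptions $\Delta$ may grow arbitrarily slowly and the resulting bound $\tfrac{k^3(\log n)^2}{\Delta^2}\bigl(\tfrac{\tau}{n}+\cdots\bigr)$ need not be $o(1)$, so Condition \ref{con:II_1}'s analogue for $Q_j$ would fail to be verified. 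The paper instead proves $\sum_j\|\epsilon_j\|^4\leq 3nd^2\lambdamax^2$ by a Chebyshev/second-moment argument on sums of squared $\chi^2_d$ variables (Lemma \ref{lem:chi_4}); this is precisely the source of the $4/(nd)$ term in the failure probability, which you quote in your conclusion but never generate anywhere in your argument. Replacing your $\max_j$ bound with that fourth-moment concentration closes the gap; everything else in your sketch goes through.
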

\begin{proof}
Under the conditions of Theorem \ref{thm:main2}, the inequalities \eqref{ine:lem:miscel.1}-\eqref{ine:keyine} hold with probability at least $1-n^{-C'}$. In the remaining proof, we will work on the event these inequalities hold.
	Hence, we can use the results from Lemma \ref{Cov.inv.rate} and \ref{Cov.inv.rate2}.
	Using the same arguments as in the proof of Lemma \ref{lem:conditions}, we can get \eqref{Hcondi.2},  \eqref{Fcondi.2} and \eqref{Gcondi.2}. 
	
	As for \eqref{Qcondi.2}, we first use Lemma \ref{lem:chi_4}  to have $\sum_{j=1}^{n}\|\epsilon_j\|^4 \leq 3nd$ with probability at least $1-4/(nd)$. Then, we have
	\begin{align*}
	\sum_{j=1}^{n}\max_{b\in[k]\setminus\{\zjstar\}}\frac{Q_j(\zjstar,b,z)^2\|\thetazjstarstar - \thetabstar\|^2}{\langle \thetazjstarstar - \thetabstar, (\Sigmabstar)^{-1}(\thetazjstarstar - \thetabstar) \rangle^2\ell(z,z^\ast)}\preceq~ &\sum_{j=1}^{n}\sum_{b=1}^{k}\frac{Q_j(\zjstar,b,z)^2}{\Delta^2\ell(z,z^*)}\\
	\leq~ &k\sum_{j=1}^{n}\|\epsilon_j\|^4\frac{\max_{a\in [k]}\|(\Sigmahataz)^{-1} - (\Sigmahatazstar)^{-1}\|^2}{\Delta^2\ell(z,z^*)}\\
	\preceq~ & \frac{k^3d^2}{\Delta^2}\br{\frac{\tau}{n} + \frac{1}{\Delta^2} + \frac{d^2}{n\Delta^2}},
	\end{align*}
	where the last inequality is due to \eqref{Sigmahatazazstar.inv} and the fact that $\ell(z,z^*)\leq \tau$. 
	
	Next for \eqref{Kcondi.2}, notice that by \eqref{cov_star_to_true}, \eqref{covrate}, and $\snr' \to \infty$, we have for any $1 \leq i \leq d$, $\frac{\lambdamin}{2} \leq \lambda_i(\Sigmahatazstar) \leq 2\lambdamax$ and 
	\begin{align*}
	\left|\log (1+\max_{a\in[k]} \frac{\|\Sigmahataz - \Sigmahatazstar\|_2}{\lambda_i(\Sigmahatazstar)})\right| \leq \left|\log (1-\max_{a\in[k]} \frac{\|\Sigmahataz - \Sigmahatazstar\|_2}{\lambda_i(\Sigmahatazstar)})\right|.
	\end{align*} 
	Thus by Lemma \ref{lem:Weyl}, we know
	\begin{align}
	&\max_{a\in[k]}\left|\log|\Sigmahataz |- \log|\Sigmahatazstar|\right| \notag\\
	=~ &\max_{a\in[k]}\left|\log \frac{|\Sigmahataz|}{|\Sigmahatazstar|}\right| \notag\\
	\leq~ & \left|\sum_{i=1}^{d}\log(1-\frac{\max_{a\in[k]}\|\Sigmahataz - \Sigmahatazstar\|_2}{\lambda_i(\Sigmahatazstar)})\right| \notag\\
	\leq & \sum_{i=1}^{d} \log\left(1 + \max_{a\in[k]}\frac{\|\Sigmahataz - \Sigmahatazstar\|_2}{\lambda_i(\Sigmahatazstar)}+ \frac{\max_{a\in[k]}\frac{\|\Sigmahataz - \Sigmahatazstar\|_2^2}{\lambda_i^2(\Sigmahatazstar)}}{1-\max_{a\in[k]}\frac{\|\Sigmahataz - \Sigmahatazstar\|_2}{\lambda_i(\Sigmahatazstar)}}\right) \notag\\
	\preceq & d\|\Sigmahataz - \Sigmahatazstar\|_2, \label{logdeterzzstar}
	\end{align}
	where the last inequality is due to the fact that $\lambda_i(\Sigmahatazstar)$ is at the constant rate, $\|\Sigmahataz - \Sigmahatazstar\|_2=o(1)$ and the inequality $\log(1+x) \leq x$ for any $x > 0$. \eqref{logdeterzzstar} yields to the inequality
	\begin{align*}
	\sum_{j=1}^{n}\max_{b\in[k]\setminus\{\zjstar\}}\frac{K_j(\zjstar,b,z)^2\|\thetazjstarstar - \thetabstar\|^2}{\langle \thetazjstarstar - \thetabstar, (\Sigmabstar)^{-1}(\thetazjstarstar - \thetabstar) \rangle^2\ell(z,z^\ast)}\preceq~ &\sum_{j=1}^{n}\frac{d^2\max_{a\in [k]}\|\Sigmahataz - \Sigmahatazstar\|^2}{\Delta^2\ell(z,z^*)}\\
	\preceq~ & \frac{k^2d^2}{\Delta^2}\br{\frac{\tau}{n} + \frac{1}{\Delta^2} + \frac{d^2}{n\Delta^2}}.
	\end{align*}
	Finally for \eqref{Lcondi.2}, by \eqref{cov_star_to_true} and the similar argument as \eqref{logdeterzzstar}, we can get
	\begin{align*}
	\max_{a\in[k]}\left|\log|\Sigmahatazstar| - \log|\Sigmaastar|\right| \preceq d\sqrt{\frac{k(d+\log n)}{n}}
	\end{align*}
	which implies \eqref{Lcondi.2}. We complete the proof.
\end{proof}
\begin{lemma}\label{lem:upperbound.2}
	With the same conditions as Theorem \ref{thm:main2}, for any sequence $\delta_n = o(1)$, we have
	\begin{align*}
	\xi_{\text{ideal}}(\delta_n) \leq n\ebr{-(1+o(1))\frac{\snr'^2}{8}}.
	\end{align*}
	with probability at least $1 - n^{-C'} - \exp(-\snr')$.
\end{lemma}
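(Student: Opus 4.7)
The plan is to follow the proof of Lemma \ref{lem:upperbound} but adapt it to the QDA setting, whose main new feature is the quadratic-in-$\epsilon_j$ term $\frac{1}{2}\langle\epsilon_j,((\Sigmahatbzstar)^{-1}-(\Sigmahatazstar)^{-1})\epsilon_j\rangle$ inside the indicator; the log-determinant correction $\tfrac{1}{2}(\log|\Sigmabstar|-\log|\Sigmaastar|)$ already uses the true covariances and requires no replacement. First I introduce a slack sequence $\bar\delta_n=o(1)$ with $\bar\delta_n^2\snr'^2\to\infty$, and bound the indicator in $\xi_{\text{ideal}}(\delta_n)$ by the sum of an ``ideal'' indicator, in which every hatted quantity $\Sigmahatazstar,\Sigmahatbzstar,\thetahatazstar,\thetahatbzstar$ is replaced by its population counterpart, plus one indicator per replacement step whose contributed error exceeds a fixed share $\bar\delta_n\snr'^2/C$ of the gap. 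This yields $\xi_{\text{ideal}}(\delta_n)\leq M_1+M_2+\cdots+M_r$ with $M_1$ the ideal term and $M_2,\ldots,M_r$ the error terms, directly analogous to the $M_1,\ldots,M_4$ of the proof of Lemma \ref{lem:upperbound}, plus one additional $M_r$ for the new quadratic-form error.

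Next I bound $\E M_i$ for $i\geq 2$ by $n\ebr{-(1+o(1))\snr'^2/8}$. The linear-in-$\epsilon_j$ errors and the errors from $\thetahatazstar-\thetaastar$ are handled by Gaussian tail bounds combined with the rates $\|\Sigmahatazstar-\Sigmaastar\|,\|\thetahatazstar-\thetaastar\|=O(\sqrt{(d+\log n)/n})$ from Lemma \ref{Cov.inv.rate}, exactly as in the proof of Lemma \ref{lem:upperbound}. The new quadratic-form error takes the shape $\tfrac{1}{2}\langle\epsilon_j,N\epsilon_j\rangle$ with a symmetric matrix $N=(\Sigmahatbzstar)^{-1}-(\Sigmabstar)^{-1}-[(\Sigmahatazstar)^{-1}-(\Sigmaastar)^{-1}]$ of operator norm $O(\sqrt{(d+\log n)/n})$; writing $\epsilon_j=(\Sigmaastar)^{1/2}w_j$ with $w_j\sim\mathn(0,I_d)$ and applying the Hanson--Wright inequality, both its mean $\tfrac{1}{2}\tr{\Sigmaastar N}$ and its fluctuation around the mean are $O(\sqrt{(d+\log n)/n})$ under $d=O(1)$, so the corresponding per-$j$ tail probability is bounded by $\ebr{-(1+o(1))\snr'^2/8}$ provided $\bar\delta_n\snr'^2\gg\sqrt{(d+\log n)/n}$, which is compatible with $\bar\delta_n=o(1)$ under the standing assumptions.

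For the main term $M_1$, after the standardization $\epsilon_j=(\Sigma^*_{z^*_j})^{1/2}w_j$ with $w_j\sim\mathn(0,I_d)$, a direct algebraic comparison with the definition of $\mathcal{B}_{z^*_j,b}$ shows that the event inside $M_1$ reduces to $\{w_j\in\widetilde{\mathcal{B}}_{z^*_j,b}\}$, where $\widetilde{\mathcal{B}}_{z^*_j,b}$ is $\mathcal{B}_{z^*_j,b}$ with its right-hand side shifted by the additive term $-\frac{\delta_n}{2}(\thetazjstarstar-\thetabstar)^T(\Sigmabstar)^{-1}(\thetazjstarstar-\thetabstar)=o(\snr'^2)$. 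By Lemma \ref{lem:2}, the probability of this event is at most $\ebr{-(1-o(1))(\snr'_{z^*_j,b})^2/8}\leq\ebr{-(1+o(1))\snr'^2/8}$, using $\snr'=\min_{a\neq b}\snr'_{a,b}$. Summing over $j,b$ with the weight $\|\thetazjstarstar-\thetabstar\|^2\lesssim\snr'^2$ (Lemma \ref{lem:sameorder}) yields $\E M_1\leq n\ebr{-(1+o(1))\snr'^2/8}$. A single Markov's inequality with inflation factor $\ebr{\snr'}$ upgrades the $L^1$ bound on $\xi_{\text{ideal}}(\delta_n)$ to the desired high-probability bound, which intersected with the $1-n^{-C'}$ event of Lemma \ref{lem:conditions.2} gives the stated probability $1-n^{-C'}-\ebr{-\snr'}$.

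The principal obstacle is the quadratic-in-$\epsilon_j$ term: unlike in Lemma \ref{lem:upperbound}, the ideal event is no longer a linear half-space, so a direct Gaussian tail bound is unavailable and one must (i) recognise the ideal event as the membership event for a mildly shifted copy of $\mathcal{B}_{z^*_j,b}$ so that the geometric analysis encoded in Lemma \ref{lem:2} supplies the sharp exponent $\snr'^2/8$, and (ii) use Hanson--Wright to show that the quadratic-form replacement error is of the subdominant order $O(\sqrt{(d+\log n)/n})$ under $d=O(1)$, which is exactly what allows the slack $\bar\delta_n$ to absorb it without degrading the exponent.
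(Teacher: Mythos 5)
Your decomposition and overall strategy match the paper's proof essentially step for step: split $\xi_{\text{ideal}}(\delta_n)$ into a main term (all hatted quantities replaced by population ones) plus one error indicator per replacement at slack level $\bar\delta_n=o(1)$, show each error term contributes at most $n\ebr{-(1+o(1))\snr'^2/8}$ in expectation, identify the main event as membership of $w_j$ in a $o(1)$-shifted copy of $\mathcal{B}_{z_j^*,b}$, and finish with Markov's inequality at inflation $\ebr{\snr'}$. One correction is needed: the lemma that controls the main term is Lemma \ref{lem:1}, not Lemma \ref{lem:2}. Lemma \ref{lem:2} is the \emph{lower} bound $P_{1,2}(0)\geq\ebr{-(1+\tilde\delta)\snr'^2_{a,b}/8}$ for the unshifted region (used only for the minimax lower bound), whereas what your argument requires is the \emph{upper} bound $P_{a,b}(\delta)\leq\ebr{-(1-\tilde\delta)\snr'^2_{a,b}/8}$ for the shifted region $\mathcal{B}_{a,b}(\delta)$ — exactly the content of Lemma \ref{lem:1}, whose four-scenario geometric argument is what certifies $\snr'_{a,b}(\delta)\geq(1-o(1))\snr'_{a,b}$. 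With that citation fixed the step goes through. Your treatment of the quadratic replacement error via Hanson--Wright is a mild variant of the paper's cruder route (bound $|\langle\epsilon_j,N\epsilon_j\rangle|$ by $\|N\|\,\|w_j\|^2$ and apply a $\chi^2_d$ tail); both work under $d=O(1)$, though note the condition you actually need is $\bar\delta_n\gg\|N\|=O(\sqrt{(d+\log n)/n})$ rather than the inequality as you wrote it — still compatible with $\bar\delta_n=o(1)$.
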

\begin{proof}
	Under the conditions of Theorem \ref{thm:main2}, the inequalities \eqref{ine:lem:miscel.1}-\eqref{ine:keyine} hold with probability at least $1-n^{-C'}$. In the remaining proof, we will work on the event these inequalities hold. Similar to the proof of Lemma \ref{lem:upperbound}, we have a decomposition $\xi_{\text{ideal}} \leq \sum_{i=1}^{6}M_i$ where
	\begin{align*}
	M_1 := \sum_{j=1}^{n}\sum_{b\in[k]\setminus\{\zjstar\}}\left\|\theta^*_{\zjstar} - \thetabstar\right\|^2\mathbb{I}\bigg\{\langle &\epsilon_j, (\Sigmabstar)^{-1}(  \thetazjstarstar   - \thetabstar) \rangle 
	+ \frac{1}{2}\langle \epsilon_j, ((\Sigmabstar)^{-1} - (\Sigmazjstarstar)^{-1})\epsilon_j \rangle \\
	&-\frac{1}{2}\log|\Sigmazjstarstar| + \frac{1}{2}\log|\Sigmabstar| 
	\leq -\frac{1-\delta-\bar{\delta}}{2}\langle   \thetazjstarstar   - \thetabstar,  (\Sigmabstar)^{-1}(  \thetazjstarstar   - \thetabstar)\rangle\bigg\}
	\end{align*}
	is the main term and 
	\begin{align*}
	M_2 := \sum_{j=1}^{n}\sum_{b\in[k]\setminus\{\zjstar\}}\left\|\theta^*_{\zjstar} - \thetabstar\right\|^2\mathbb{I}\bigg\{\langle &\epsilon_j, ((\Sigmahatbzstar)^{-1} - (\Sigmabstar)^{-1})(  \thetazjstarstar   - \thetabstar)  \rangle\leq -\frac{\bar{\delta}}{10} \langle   \thetazjstarstar   - \thetabstar,  (\Sigmabstar)^{-1}(  \thetazjstarstar   - \thetabstar)\rangle\bigg\}
	\end{align*}
	\begin{align*}
	M_3:= \sum_{j=1}^{n}\sum_{b\in[k]\setminus\{\zjstar\}}\left\|\theta^*_{\zjstar} - \thetabstar\right\|^2\mathbb{I}\left\{-\langle\epsilon_j, (\Sigmahatzjstarzstar)^{-1}(  \thetazjstarstar   -   \thetahatzjstarzstar  )\rangle\leq -\frac{\bar{\delta}}{10} \langle   \thetazjstarstar   - \thetabstar,  (\Sigmabstar)^{-1}(  \thetazjstarstar   - \thetabstar)\rangle\right\}
	\end{align*}
	\begin{align*}
	M_4:=\sum_{j=1}^{n}\sum_{b\in[k]\setminus\{\zjstar\}}\left\|\theta^*_{\zjstar} - \thetabstar\right\|^2\mathbb{I}\left\{-\langle\epsilon_j, (\Sigmahatbzstar)^{-1}(\thetahatbzstar - \thetabstar)\rangle\leq -\frac{\bar{\delta}}{10} \langle   \thetazjstarstar   - \thetabstar,  (\Sigmabstar)^{-1}(  \thetazjstarstar   - \thetabstar)\rangle\right\}
	\end{align*}
	\begin{align*}
	M_5:= \sum_{j=1}^{n}\sum_{b\in[k]\setminus\{\zjstar\}}\left\|\theta^*_{\zjstar} - \thetabstar\right\|^2\mathbb{I}\left\{\frac{1}{2}\langle \epsilon_j, ((\Sigmahatbzstar)^{-1}-(\Sigmabstar)^{-1})\epsilon_j\rangle \leq - \frac{\bar{\delta}}{10}\langle   \thetazjstarstar   - \thetabstar,  (\Sigmabstar)^{-1}(  \thetazjstarstar   - \thetabstar)\rangle\right\}
	\end{align*}
	\begin{align*}
	M_6:=\sum_{j=1}^{n}\sum_{b\in[k]\setminus\{\zjstar\}}\left\|\theta^*_{\zjstar} - \thetabstar\right\|^2\mathbb{I}\left\{-\frac{1}{2}\langle \epsilon_j, ((\Sigmahatzjstarzstar)^{-1}-(\Sigmazjstarstar)^{-1})\epsilon_j\rangle \leq -\frac{\bar{\delta}}{10} \langle   \thetazjstarstar   - \thetabstar,  (\Sigmabstar)^{-1}(  \thetazjstarstar   - \thetabstar)\rangle\right\}.
	\end{align*}
	Using the same arguments as the proof of Lemma \ref{lem:upperbound}, we can choose some $\bar{\delta} = \bar{\delta}_n=o(1)$ which is slowly diverging to zero satisfying 
	\begin{align*}
	\E M_i \leq n\ebr{-(1+o(1))\frac{\snr^{'2}}{2}} \quad \text{for }i=2,3,4.
	\end{align*} 
	As for $M_5$, by \eqref{cov_star_to_true} we have
	\begin{align*}
	M_5 \leq \sum_{j=1}^{n}\sum_{b\in[k]\setminus\{\zjstar\}}\left\|\theta^*_{\zjstar} - \thetabstar\right\|^2\mathbb{I}\left\{C\bar{\delta}\left\|  \thetazjstarstar   - \thetabstar\right\|^2 \leq \|w_j\|^2\sqrt{\frac{\log n}{n}}\right\},
	\end{align*} 
	where $C$ is a constant and $w_j \iid \mathn(0, I_d)$. Since there exists some constant $C'$ such that $\snr' \leq C'\Delta $, we can choose appropriate $\bar{\delta} = o(1)$ such that
	\begin{align*}
	\E M_5 &\leq \sum_{j=1}^{n}\sum_{b\in[k]\setminus\{\zjstar\}}\left\|\theta^*_{\zjstar} - \thetabstar\right\|^2\p\left\{C\bar{\delta}\left\|  \thetazjstarstar   - \thetabstar\right\|^2\sqrt{\frac{n}{\log n}} \leq \|w_j\|^2\right\}\\
	&\leq n\ebr{-(1+o(1))\frac{\snr^{'2}}{8}}.
	\end{align*}
	$M_6$ is essentially the same with $M_5$. Finally for $M_1$, using Lemma \ref{lem:1}, we have
	\begin{align*}
	&\p\bigg(\langle \epsilon_j, (\Sigmabstar)^{-1}(  \thetazjstarstar   - \thetabstar) \rangle 
	+ \frac{1}{2}\langle \epsilon_j, ((\Sigmabstar)^{-1} - (\Sigmazjstarstar)^{-1})\epsilon_j \rangle \\
	&\quad \quad -\frac{1}{2}\log|\Sigmazjstarstar| + \frac{1}{2}\log|\Sigmabstar| 
	\leq -\frac{1-\delta-\bar{\delta}}{2}\langle   \thetazjstarstar   - \thetabstar,  (\Sigmabstar)^{-1}(  \thetazjstarstar   - \thetabstar)\rangle\bigg)\\
	=& \p\bigg(\langle w_j, (\Sigma^*_{z^*_j})^\frac{1}{2}(\Sigmabstar)^{-1}(  \thetazjstarstar   - \thetabstar) \rangle 
	+ \frac{1}{2}\langle w_j, ( (\Sigma^*_{z^*_j})^\frac{1}{2}(\Sigmabstar)^{-1} (\Sigma^*_{z^*_j})^\frac{1}{2} - I_d)w_j \rangle \\
	&\quad \quad -\frac{1}{2}\log|\Sigmazjstarstar| + \frac{1}{2}\log|\Sigmabstar| 
	\leq -\frac{1-\delta-\bar{\delta}}{2}\langle   \thetazjstarstar   - \thetabstar,  (\Sigmabstar)^{-1}(  \thetazjstarstar   - \thetabstar)\rangle\bigg)\\
	&\leq \ebr{-(1-o(1)) \frac{\snr'_{z^*_j,b}}{8}}.
	\end{align*}
	Then we have 
	\begin{align*}
	\E M_1 \leq n\ebr{-(1+o(1))\frac{\snr^{'2}}{8}}.
	\end{align*}
	Using the Markov's inequality we complete the proof of Lemma \ref{lem:upperbound.2}. 
\end{proof}
\begin{proof}[Proof of Theorem \ref{thm:main2}]
By Lemmas \ref{lem:linear convergence.2}-\ref{lem:upperbound.2}, we can obtain the result by arguments used in the proof of Theorem \ref{thm:main1} and hence is omitted here.
\end{proof}

\section{Technical Lemmas}\label{sec:technical}

Here are the technical lemmas.
\begin{lemma}\label{lem:chi-square}
	For any $x>0$, we have 
\begin{align*}
\p(\chi_d^2 \geq d+2\sqrt{dx}+2x) \leq e^{-x},\\
\p(\chi_d^2 \leq d-2\sqrt{dx}) \leq e^{-x}.
\end{align*}
\end{lemma}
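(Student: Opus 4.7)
The plan is to derive both tail bounds by the standard Chernoff method applied to the moment generating function of $\chi_d^2$. Writing $\chi_d^2 = \sum_{i=1}^{d} Z_i^2$ with $Z_i \iid \mathn(0,1)$, a one-dimensional Gaussian integral gives $\E e^{\lambda \chi_d^2} = (1-2\lambda)^{-d/2}$ for $\lambda < 1/2$. The upper and lower tails are then handled separately, each reducing to a univariate optimization over the Chernoff parameter.

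For the lower tail, for any $\mu > 0$ Markov's inequality applied to $e^{-\mu \chi_d^2}$ yields
\begin{align*}
\p(\chi_d^2 \leq d - t) \leq e^{\mu(d-t)} \E e^{-\mu \chi_d^2} = \exp\br{\mu d - \mu t - \frac{d}{2}\log(1+2\mu)}.
\end{align*}
Using the elementary inequality $\log(1+u) \geq u - u^2/2$ for $u \geq 0$, the exponent is bounded by $-\mu t + d\mu^2$, and optimizing at $\mu = t/(2d)$ gives the Gaussian-type bound $\exp(-t^2/(4d))$. Setting $t = 2\sqrt{dx}$ yields the desired $e^{-x}$.

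For the upper tail, for $0 < \lambda < 1/2$ Markov gives
\begin{align*}
\p(\chi_d^2 \geq d+t) \leq \exp\br{-\lambda(d+t) - \frac{d}{2}\log(1-2\lambda)}.
\end{align*}
The first-order condition identifies the optimizer $\lambda^* = t/(2(d+t))$, and the bound reduces to $\exp\br{-\tfrac{d}{2}\br{\rho - \log(1+\rho)}}$ where $\rho = t/d$. I would then substitute $t = 2\sqrt{dx} + 2x$ and set $y = \sqrt{x/d}$, so that $\rho = 2y + 2y^2$, reducing the claim to the single scalar inequality $1 + 2y + 2y^2 \leq e^{2y}$ for $y \geq 0$; this follows from the Taylor expansion of $e^{2y}$, whose first three terms match $1 + 2y + 2y^2$ and whose remaining terms are all non-negative.

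The main obstacle is not the Chernoff optimization itself but recognizing why the deviation takes the specific form $2\sqrt{dx} + 2x$: the additive $2x$ term is exactly what is needed to absorb the cubic-and-higher residual of $-\log(1-2\lambda)$ beyond its quadratic part, which the pure square-root term $2\sqrt{dx}$ alone cannot dominate. The Taylor verification $e^{2y} \geq 1 + 2y + 2y^2$ is what makes the two-scale deviation $(\sqrt{x},x)$ rigorous, and once in place both inequalities follow cleanly.
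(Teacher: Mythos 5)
Your proof is correct. For the record, the paper does not actually prove this lemma: it simply cites Lemma~1 of Laurent and Massart (2000), so your Chernoff derivation is a self-contained replacement for that citation rather than a variant of an argument in the text. Both halves of your computation check out. For the lower tail, $\E e^{-\mu\chi_d^2}=(1+2\mu)^{-d/2}$ together with $\log(1+u)\geq u-u^2/2$ for $u\geq 0$ gives exponent at most $-\mu t+d\mu^2$, and $\mu=t/(2d)$ with $t=2\sqrt{dx}$ yields $e^{-x}$. For the upper tail, the optimizer $\lambda^*=t/(2(d+t))$ is indeed in $(0,1/2)$, the bound collapses to $\exp\br{-\tfrac{d}{2}(\rho-\log(1+\rho))}$ with $\rho=t/d$, and with $\rho=2y+2y^2$, $y=\sqrt{x/d}$, the claim reduces to $e^{2y}\geq 1+2y+2y^2$, which holds term by term in the Taylor series. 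Your closing remark correctly identifies the role of the two-scale deviation: the $2\sqrt{dx}$ term handles the quadratic part of $-\log(1-2\lambda)$ and the additive $2x$ absorbs the higher-order residual. This is essentially the Cram\'er--Chernoff argument behind the cited result, so nothing is lost relative to the paper; what you gain is a proof that does not rely on an external reference.
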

\begin{proof}
	These results are Lemma 1 of \citep{laurent2000adaptive}.
\end{proof}
\begin{lemma}\label{lem:miscel}
	For any $z^* \in [k]^n$ and $k \in [n]$, consider independent vectors $\epsilon_j \sim \mathn(0,\Sigma^*_{z^*_j})$ for any $j\in[n]$. Assume there exists a constant $\lambdamax >0$ such that $  \|\Sigma_a^*\|\leq \lambdamax$ for any $a\in[k]$. Then, for any constant $C'>0$, there exists some constant $C>0$ only depending on $C',\lambda_{\max}$ such that
	\begin{align}
	\max_{a\in[k]}\left\|\frac{\sumzjstareqa\epsilon_j}{\sqrt{\sumzjstareqa}}\right\| &\leq  C\sqrt{d+\log n}, \label{ine:lem:miscel.1}\\ 
	\max_{a\in[k]}\frac{1}{d+\sumzjstareqa}\left\| \sumzjstareqa\epsilon_j\epsilon_j^T\right\| &\leq C, \label{ine.lem.miscel.3}\\
		\max_{T \subset [n]}\left\| \frac{1}{\sqrt{|T|}}\sum_{j\in T}\epsilon_j\right\| &\leq C\sqrt{d+n}, \label{ine:lem:miscel.2}\\
		\max_{a\in[k]}\max_{T\subset\{j:z_j^*=a\}}\left\| \frac{1}{\sqrt{|T|(d+\sumzjstareqa)}}\sum_{j\in T}\epsilon_j\right\|&\leq C, \label{ine:lem:covspec.true.2}
	\end{align}
	with probability at least $1-n^{-C'}$. We have used the convention that $0/0 = 0$.
\end{lemma}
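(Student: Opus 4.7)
The plan is to derive all four bounds by combining chi-squared tail inequalities (Lemma \ref{lem:chi-square}) with suitable union bounds over the outer maxima. Each of the four quantities is, after proper normalization, either the norm of a centered Gaussian vector or a quadratic form in Gaussians whose effective variance is bounded by $\lambda_{\max}$, so each reduces to controlling a $\chi^2_d$ or $\chi^2_{n_a}$ tail, where throughout $n_a := |\{j \in [n] : z^*_j = a\}|$.

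For \eqref{ine:lem:miscel.1}, the terms $\{\epsilon_j : z^*_j = a\}$ are independent $\mathcal{N}(0, \Sigma^*_a)$, so the normalized sum $\sum_{j : z^*_j = a} \epsilon_j / \sqrt{n_a}$ is itself $\mathcal{N}(0, \Sigma^*_a)$ and its squared norm is stochastically dominated by $\lambda_{\max} \chi^2_d$. Applying Lemma \ref{lem:chi-square} with $x = (C'+1) \log n$ and union bounding over $a \in [k] \subseteq [n]$ yields the $O(\sqrt{d + \log n})$ bound. For \eqref{ine:lem:miscel.2}, for each fixed $T \subset [n]$ the vector $\sum_{j \in T} \epsilon_j / \sqrt{|T|}$ is centered Gaussian with covariance $|T|^{-1} \sum_{j \in T} \Sigma^*_{z^*_j}$ of operator norm at most $\lambda_{\max}$, so its squared norm is again dominated by $\lambda_{\max} \chi^2_d$; applying Lemma \ref{lem:chi-square} with $x = n \log 2 + (C'+1) \log n$ and union bounding over the $2^n$ subsets yields the $O(\sqrt{d + n})$ bound. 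Inequality \eqref{ine:lem:covspec.true.2} follows by the same scheme restricted to $T \subset \{j : z^*_j = a\}$: the normalized Gaussian has covariance exactly $\Sigma^*_a$, there are $2^{n_a}$ subsets, and using $x = n_a \log 2 + (C'+1) \log n$ together with a final union bound over $a \in [k]$ produces the claimed bound.

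The main obstacle is \eqref{ine.lem.miscel.3}, which concerns the operator norm of a sample second-moment matrix rather than a Gaussian norm. The plan is to write
\begin{align*}
\Bigl\| \sum_{j : z^*_j = a} \epsilon_j \epsilon_j^T \Bigr\|_{\rm op} = \sup_{\|v\| = 1} \sum_{j : z^*_j = a} (v^T \epsilon_j)^2,
\end{align*}
and observe that for each fixed unit $v$ the scalar $v^T \epsilon_j$ is $\mathcal{N}(0, v^T \Sigma^*_a v)$ with variance at most $\lambda_{\max}$, so $\sum_{j : z^*_j = a} (v^T \epsilon_j)^2$ is stochastically dominated by $\lambda_{\max} \chi^2_{n_a}$. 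I would then discretize the unit sphere by a standard $1/4$-net of cardinality at most $9^d$, apply Lemma \ref{lem:chi-square} with $x \asymp d + \log n$, and union bound over the net and over $a \in [k]$. This produces an operator norm bound of order $n_a + d + \log n$, which is absorbed into $C(d + n_a)$ in the regime where this lemma is invoked in the proofs of Lemmas \ref{lem:conditions} and \ref{lem:conditions.2}, where the lower bound $n_a \gtrsim n/k$ dominates $\log n$.
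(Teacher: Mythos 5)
Your proof is correct in substance, but it is a more self-contained route than the paper's: the paper disposes of \eqref{ine:lem:miscel.1} and \eqref{ine:lem:miscel.2} by citing Lemmas A.4 and A.1 of \cite{lu2016statistical}, declares \eqref{ine.lem.miscel.3} a ``slight extension'' of Lemma A.2 there via a standard union bound, and says \eqref{ine:lem:covspec.true.2} is proved identically to \eqref{ine:lem:miscel.2}; you actually carry out the $\chi^2$-tail-plus-union-bound (and, for the operator norm, $1/4$-net) arguments. Your mechanics are exactly the ones the paper uses when it does write such a proof out in full (compare the $9^d$-net and Lemma \ref{lem:chi-square} in the proof of Lemma \ref{lem:covspec.true}), so the two approaches are really the same argument at different levels of explicitness; yours has the advantage of making the constants and failure probabilities auditable rather than imported.

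One point deserves care. The union bounds in \eqref{ine.lem.miscel.3} and \eqref{ine:lem:covspec.true.2} force the tail parameter $x$ to be at least of order $\log n$ (to beat the factor $k\le n$ from the maximum over clusters), so what your argument actually yields is a bound of order $d+n_a+\log n$ rather than $d+n_a$, where $n_a=\sumzjstareqa$. You acknowledge this slack for \eqref{ine.lem.miscel.3} but not for \eqref{ine:lem:covspec.true.2}, where it appears in exactly the same way: with $x=n_a\log 2+(C'+1)\log n$ you obtain $\|\sum_{j\in T}\epsilon_j\|^2/|T|\lesssim d+n_a+\log n$, which is not $O(d+n_a)$ when a cluster is tiny (e.g.\ $d=1$ and $n_a=1$ for all $a$, in which case the left side behaves like $\max_j\|\epsilon_j\|^2\asymp\log n$). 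This is not really your error --- the lemma as literally stated fails in that same edge case, so the paper is implicitly assuming $\log n\lesssim d+n_a$ --- and in every application one has $n_a\ge\alpha n/k$ and $k(d+\log n)=o(n)$, so $\log n\lesssim n_a$ and the extra term is absorbed. But the caveat should be stated for \eqref{ine:lem:covspec.true.2} just as you stated it for \eqref{ine.lem.miscel.3}.
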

\begin{proof}
	Note that $\epsilon_j$ is sub-Gaussian with parameter $\lambdamax$ which is a constant.
	The inequalities \eqref{ine:lem:miscel.1} and \eqref{ine:lem:miscel.2} are respectively Lemmas A.4, A.1 in \citep{lu2016statistical}. The inequality \eqref{ine.lem.miscel.3} is a slight extension of Lemma A.2 in \citep{lu2016statistical}. This extension can be done by a standard union bound argument. The proof of \eqref{ine:lem:covspec.true.2} is identical to that of \eqref{ine:lem:miscel.2}.
\end{proof}

\begin{lemma}\label{lem:covspec.true}
Consider the same assumptions as in Lemma \ref{lem:miscel}.	
	Assume additionally $\min_{a\in k}\sum_{j=1}^n\mathbb{I}\{z^*_j = a\}\geq \frac{\alpha n}{k}$ for some constant $\alpha>0$ and $\frac{k(d+\log n)}{n}=o(1)$.
	Then, for any constant $C'>0$, there exists some constant $C>0$ only depending on $\alpha,C',\lambdamax$ such that
	\begin{align}\label{ine:lem:covspec.true}
	\max_{a\in[k]}\left\|\frac{1}{\sum_{j=1}^{n}\mathbb{I}\{z_j^*=a\}}\sum_{j=1}^{n}\mathbb{I}\{z_j^*=a\}\epsilon_j\epsilon_j^T - \Sigmaastar\right\| &\leq C\sqrt{\frac{k(d+\log n)}{n}},
	\end{align}
	with probability at least $1-n^{-C'}$.
\end{lemma}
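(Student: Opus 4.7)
The plan is to prove the bound one cluster at a time and then take a union bound over $a \in [k]$. Fix $a \in [k]$ and let $n_a := \sum_{j=1}^n \mathbb{I}\{z_j^* = a\}$. By the assumption on cluster sizes, $n_a \geq \alpha n/k$. Conditionally on $z^*$, the $n_a$ error vectors $\{\epsilon_j : z_j^* = a\}$ are i.i.d.\ $\mathn(0,\Sigma_a^*)$, so the quantity inside the operator norm is exactly a centered sample covariance of $n_a$ i.i.d.\ Gaussian vectors in $\mathbb{R}^d$.

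Next I would invoke the standard non-asymptotic covariance concentration bound for Gaussian samples: for any $t > 0$, with probability at least $1 - 2 e^{-t}$,
\begin{equation*}
\left\|\frac{1}{n_a}\sum_{j:z_j^* = a}\epsilon_j\epsilon_j^T - \Sigma_a^*\right\| \;\leq\; C_0\,\lambda_{\max}\!\left(\sqrt{\frac{d+t}{n_a}} \;+\; \frac{d+t}{n_a}\right),
\end{equation*}
for an absolute constant $C_0$. This is a textbook result that can be derived from an $\varepsilon$-net argument on the unit sphere $S^{d-1}$ combined with the sub-exponential tail of the scalar chi-square-type variables $u^T(\epsilon_j\epsilon_j^T - \Sigma_a^*)u$ via Bernstein's inequality; see e.g.\ Vershynin's high-dimensional probability text. (Alternatively the bound $\|\hat\Sigma - \Sigma\| \preceq \|\Sigma\|(\sqrt{d/n_a} + d/n_a + \sqrt{t/n_a} + t/n_a)$ from Koltchinskii--Lounici can be cited directly; here $\|\Sigma_a^*\|\leq \lambda_{\max}$ is a constant.)

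Now I would choose $t = (C'+2)\log n$ so that $2e^{-t} \leq n^{-(C'+1)}$, and take a union bound over $a \in [k]$; since $k \leq n$, the failure probability is at most $n \cdot n^{-(C'+1)} = n^{-C'}$. On the complement, using $n_a \geq \alpha n/k$,
\begin{equation*}
\max_{a\in[k]}\left\|\frac{1}{n_a}\sum_{j:z_j^*=a}\epsilon_j\epsilon_j^T - \Sigma_a^*\right\| \;\leq\; C_0\lambda_{\max}\!\left(\sqrt{\frac{k(d+\log n)}{\alpha n}} + \frac{k(d+\log n)}{\alpha n}\right).
\end{equation*}
Under the assumption $k(d+\log n)/n = o(1)$, the linear term is dominated by the square-root term, yielding the claimed bound with constant $C$ depending only on $\alpha$, $C'$, and $\lambda_{\max}$.

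The only real work is the $\varepsilon$-net/Bernstein step, but since it is a well-known concentration inequality applied to genuinely i.i.d.\ Gaussian vectors (after conditioning on $z^*$), I do not expect any obstacle beyond correctly tracking constants; the cluster-size lower bound $n_a \geq \alpha n/k$ converts the per-cluster rate $\sqrt{(d+t)/n_a}$ into the $\sqrt{k(d+\log n)/n}$ appearing in the statement.
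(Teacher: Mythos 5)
Your proposal is correct and follows essentially the same route as the paper: reduce to per-cluster concentration of the sample covariance of i.i.d.\ Gaussians (the paper whitens to $\eta_j\sim\mathn(0,I_d)$ and runs the $\varepsilon$-net plus $\chi^2$-tail argument explicitly, which is exactly the proof of the standard bound you cite), then union bound over $a\in[k]$ and use $n_a\geq \alpha n/k$ together with $k(d+\log n)/n=o(1)$ to absorb the linear term. No gaps.
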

\begin{proof}
Note that we have $\epsilon_j =\Sigma^{*\frac{1}{2}}_{z^*_j} \eta_j$ where $\eta_j\iid \mathn(0,I_d)$ for any $j\in[n]$. Since $\max_a\|\Sigma^*_a\|\leq \lambdamax$, we have
\begin{align*}
\max_{a\in[k]}\left\|\frac{1}{\sum_{j=1}^{n}\mathbb{I}\{z_j^*=a\}}\sum_{j=1}^{n}\mathbb{I}\{z_j^*=a\}\epsilon_j\epsilon_j^T - \Sigmaastar\right\|  \leq  \lambdamax\max_{a\in[k]}\left\|\frac{1}{\sum_{j=1}^{n}\mathbb{I}\{z_j^*=a\}}\sum_{j=1}^{n}\mathbb{I}\{z_j^*=a\}\eta_j\eta_j^T - I_d\right\|.
\end{align*}
Define
		\begin{align*}
		Q_a = \frac{1}{\sum_{j=1}^{n}\mathbb{I}\{z_j^*=a\}}\sum_{j=1}^{n}\mathbb{I}\{z_j^*=a\}\eta_j\eta_j^T - I_d.
		\end{align*}
Take $S^{d-1} = \{y \in \mathbb{R}^d:\|y\| = 1\}$ and $N_\epsilon = \{v_1, \cdots, v_{|N_\epsilon|}\}$ is an $\epsilon$-covering  of $S^{d-1}$. In particular, we pick $\epsilon < \frac{1}{4}$, then $|N_\epsilon| \leq 9^d $. By the definition of the $\epsilon$-covering, we have
		\begin{align*}
		\left\|Q_a\right\| \leq \frac{1}{1-2\epsilon}\max_{i = 1,\cdots, |N_\epsilon|}|v_i^TQ_av_i| \leq 2\max_{i = 1,\cdots, |N_\epsilon|}|v_i^TQ_av_i|.
		\end{align*}
		For any $v \in N_\epsilon$,
		\begin{align*}
		v^TQ_av = \frac{1}{\sum_{j=1}^{n}\mathbb{I}\{z_j^*=a\}}\sum_{j=1}^{n}\mathbb{I}\{z_j^*=a\}(v^T\eta_j\eta_j^Tv - 1).
		\end{align*} 
		Denote $n_a = \sum_{j=1}^{n}\mathbb{I}\{z_j^*=a\}$. Then $\sum_{j=1}^{n}\mathbb{I}\{z_j^*=a\}v^T\eta_j\eta_j^Tv \sim \chi^2_{n_a}$.
Using Lemma \ref{lem:chi-square}, we have
\begin{align*}
		P(\max_{a\in[k]}\|Q_a\| \geq t) &\leq \sum_{a=1}^{k}P(\|Q_a\| \geq t) \notag \\ 
		&\leq \sum_{a=1}^{k}\sum_{i=1}^{|N_\epsilon|}P(|v_i^TQ_av_i| \geq t/2)  \notag \\ 
		&\leq \sum_{a=1}^k 2\exp\Biggl\{-\frac{n_a}{8} \min\{t,t^2\}+ d\log 9\Biggr\}.
		\end{align*}  
		Since $\frac{k(d+\log n)}{n}=o(1)$ and $n_a \geq \alpha n/k$ where $\alpha$ is a  constant, we can take $t= C''\sqrt{\frac{k(d+\log n)}{n}}$ for some large constant $C''$ and the proof is complete.
\end{proof}

\begin{lemma}
	Consider the same assumptions as in Lemma \ref{lem:miscel}.	
	Then, for any $ s  = o(n)$ and for any constant $C'>0$, there exists some constant $C>0$ only depending on $C',\lambdamax$ such that
	\begin{align}
	\max_{T \subset [n]: |T| \leq  s }\frac{1}{|T|\log\frac{n}{|T|}+\min\{1,\sqrt{\abs{T}}\}d}\left\|\sum_{j \in T}\epsilon_j\epsilon_j^T\right\| \leq C \label{ine:keyine},
	\end{align}
	with probability at least $1-n^{-C'}$. We have used the convention that $0/0 = 0$.
\end{lemma}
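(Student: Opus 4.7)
The plan is to combine an $\epsilon$-net argument on the unit sphere with union bounds over the subset $T$ and its cardinality $t=|T|$, using the chi-squared tail bound in Lemma \ref{lem:chi-square}. Fix $t\in[s]$, fix $T\subset[n]$ with $|T|=t$, and fix a unit vector $v\in S^{d-1}$. Writing $\epsilon_j=(\Sigma_{z_j^*}^*)^{1/2}\eta_j$ for iid $\eta_j\sim\mathn(0,I_d)$, we have $v^T\epsilon_j\sim\mathn(0,v^T\Sigma_{z_j^*}^*v)$ with variance at most $\lambdamax$, which gives the pointwise domination $\sum_{j\in T}(v^T\epsilon_j)^2 \leq \lambdamax \sum_{j\in T}\big((v^T\epsilon_j)^2/(v^T\Sigma_{z_j^*}^*v)\big) \stackrel{d}{=} \lambdamax\,\chi^2_t$. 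Hence by Lemma \ref{lem:chi-square}, for any $u>0$,
\[
\p\Big(\sum_{j\in T}(v^T\epsilon_j)^2 \geq \lambdamax(t+2\sqrt{tu}+2u)\Big) \leq e^{-u}.
\]

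Take a $\tfrac14$-net $N\subset S^{d-1}$ with $|N|\leq 9^d$. The standard discretization inequality gives $\|\sum_{j\in T}\epsilon_j\epsilon_j^T\|\leq 2\max_{v\in N} v^T\big(\sum_{j\in T}\epsilon_j\epsilon_j^T\big)v$. Union-bounding over $v\in N$, over $\binom{n}{t}\leq (en/t)^t$ choices of $T$ with $|T|=t$, and over $t\in\{1,\dots,s\}$, I obtain
\[
\p\Big(\exists\, t,T:\ \|{\textstyle\sum_{j\in T}}\epsilon_j\epsilon_j^T\|\geq 2\lambdamax(t+2\sqrt{tu_t}+2u_t)\Big) \leq \sum_{t=1}^{s}\exp\big(d\log 9 + t\log(en/t) - u_t\big).
\]
Choosing $u_t = K\big(t\log(en/t)+d+(C'+2)\log n\big)$ for a sufficiently large constant $K=K(C',\lambdamax)$, the right-hand side is at most $n^{-C'}$. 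On the complementary event, using $2\sqrt{tu_t}\leq t+u_t$,
\[
\|{\textstyle\sum_{j\in T}}\epsilon_j\epsilon_j^T\|\ \lesssim\ t+u_t\ \lesssim\ t\log(n/t)+d+\log n.
\]

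It remains to absorb the extra $\log n$ into the denominator $|T|\log(n/|T|)+d$. Here I would use the elementary fact that $f(t)=t\log(n/t)$ satisfies $f'(t)=\log(n/t)-1\geq 0$ on $[1,n/e]$ with $f(1)=\log n$. Since $s=o(n)$, we eventually have $s\leq n/e$, so for every $1\leq t\leq s$, $t\log(n/t)\geq\log n$. Consequently $t\log(n/t)+d+\log n\lesssim t\log(n/t)+d$, which matches the denominator (recalling $\min\{1,\sqrt{|T|}\}=1$ for $|T|\geq 1$), yielding the desired uniform bound with constant $C$ depending only on $C'$ and $\lambdamax$.

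The main obstacle I anticipate is precisely this final absorption: the union bound inevitably produces an additive $\log n$ (from $\log\binom{n}{t}$ and the tolerance needed to control $s\leq n$ values of $t$), which by itself is larger than $t\log(n/t)$ at small $t$ unless one exploits the monotonicity of $f(t)=t\log(n/t)$ on $[1,n/e]$ and the hypothesis $s=o(n)$. All other estimates (chi-squared tails, net cardinality, collapsing $\sqrt{tu_t}$ into $t+u_t$) are standard.
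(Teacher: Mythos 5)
Your proposal is correct and follows essentially the same route as the paper's proof: a $9^d$-cardinality net on the sphere, a sub-Gaussian/chi-squared quadratic-form tail bound (the paper invokes the Hsu--Kakade--Zhang inequality where you dominate by $\lambdamax\chi^2_t$, which is equivalent here), a union bound over $v$, $T$, and $t=|T|$, and the same key observation that $t\log(n/t)$ is increasing on $[1,n/e]$ with value $\log n$ at $t=1$, which absorbs the extra $\log n$ generated by the union bound. No gaps.
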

\begin{proof}
Consider any $a\in [ s ]$ and a fixed $T\subset[n]$ such that $\abs{T}=a$. Similar to the proof of Lemma \ref{lem:covspec.true}, we can take $S^{d-1} = \{y \in \mathbb{R}^d:\|y\| = 1\}$ and its $\epsilon$-covering $N_\epsilon$ with $\epsilon < \frac{1}{4}$ and $|N_\epsilon| \leq 9^d$. Then we have
\begin{align*}
\|\sum_{j \in T}\epsilon_j\epsilon_j^T\|  = \sup_{\norm{w}=1} \sum_{j\in T} (w^T\epsilon_j)^2 \leq 2 \max_{w\in N_\epsilon}\sum_{j\in T} (w^T\epsilon_j)^2.
\end{align*}
Note that $w^T\epsilon_j/\sqrt{\lambdamax}$ is a sub-Gaussian random variable with parameter 1. By \cite{hsu2012tail}, for any fixed $w\in N_\epsilon$, we have
\begin{align*}
\pbr{\sum_{j\in T} (w^T\epsilon_j)^2 \geq \lambdamax \br{a+ 2\sqrt{at} + 2t}}\leq \ebr{-t}.
\end{align*}
Since $a = o(n)$, there exists a constant $C_0$ such that $2a \leq C_0a\log\frac{n}{a}$. We can take $t = \tilde{C}(a\log \frac{n}{a}+d)$ with $\tilde{C} = \frac{C}{16} - \frac{C_0}{4}$, then $a+2\sqrt{at}+2t \leq \frac{C}{4}(a\log\frac{n}{a}+d)$. Thus, 
	 \begin{align*}
	 \pbr{ \sum_{j\in T} (w^T\epsilon_j)^2 \geq \frac{C}{4}(a\log\frac{n}{a}+d)} \leq \exp\bigg(-\tilde{C}(a\log\frac{n}{a}+d)\bigg).
	 \end{align*}
	 Hence, we have
	 \begin{align*}
	 \pbr{ \|\sum_{j \in T}\epsilon_j\epsilon_j^T\| \geq \frac{C}{2}(a\log\frac{n}{a}+d)}  \leq 9^d\exp\bigg(-\tilde{C}(a\log\frac{n}{a}+d)\bigg).
	 \end{align*}
	 As a result,
\begin{align*}
	 \p\bigg\{\max_{T \subset [n], 1 \leq |T| \leq  s }\frac{1}{|T|\log\frac{n}{|T|}+d}\|\sum_{j \in T}\epsilon_j\epsilon_j^T\| \geq C  \bigg\}\leq &\sum_{a=1}^{ s }\p\biggl\{ \max_{|T|=a}\|\sum_{j \in T}\epsilon_j\epsilon_j^T\| \geq C(a\log\frac{n}{a}+d)\biggr\}\\
	 \leq  & \sum_{a=1}^{ s } \binom{n}{a} \max_{|T|=a} \p\biggl\{ \|\sum_{j \in T}\epsilon_j\epsilon_j^T\| \geq C(a\log\frac{n}{a}+d)\biggr\}\\
	 &\leq   \sum_{a=1}^{ s }  \binom{n}{a} 9^d\exp\bigg(-\tilde{C}(a\log\frac{n}{a}+d)\bigg).
	 \end{align*}
	 Since $a\log\frac{n}{a}$ is an increasing function when $a \in [1, s ]$ and $a\log\frac{n}{a} \geq \log n \geq \log  s $, a choice of $\tilde{C} = 3 + C'$, that is $C = 16C'+4C_0+48$, can yield the desired result. 
	 
	 Finally, to allow $\abs{T}=0$, we note that $d\leq \min\{1,\sqrt{\abs{T}}\}d$. The proof is complete.
\end{proof}

\begin{lemma}
	For  any $z^* \in [k]^n$ and $k \in [n]$, assume $\min_{a\in k}\sum_{j=1}^n\mathbb{I}\{z^*_j = a\}\geq \frac{\alpha n}{k}$ and $\ell(z,z^*) = o(\frac{n\Delta^2}{k})$, then 
	\begin{align}\label{ine:lem:compare.num}
	\max_{a\in [k]}\frac{\sumzjstareqa}{\sumzjeqa} \leq 2.
	\end{align}
\end{lemma}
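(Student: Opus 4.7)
The plan is to reduce the claim $n_a^*/n_a \leq 2$ (with $n_a^* := \sum_j \mathbb{I}\{z_j^* = a\}$ and $n_a := \sum_j \mathbb{I}\{z_j = a\}$) to counting the indices where $z$ disagrees with $z^*$ on cluster $a$, and then to bound that disagreement set using the loss $\ell(z,z^*)$ together with the minimum separation $\Delta$.

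First I would introduce $m_a := |\{j \in [n] : z_j^* = a, \ z_j \neq a\}|$ and observe the one-line identity $n_a \geq n_a^* - m_a$, because the $j$'s that $z^*$ assigns to cluster $a$ but $z$ assigns elsewhere are exactly the ones removed when passing from $n_a^*$ to the smaller count $\sum_j \mathbb{I}\{z_j = a, z_j^* = a\} \leq n_a$. Hence it suffices to show $m_a \leq n_a^*/2$, which by the assumption $n_a^* \geq \alpha n/k$ is implied by $m_a \leq \alpha n/(2k)$.

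Next I would control $m_a$ via $\ell(z,z^*)$: for every $j$ contributing to $m_a$ we have $z_j \neq z_j^*$, so $\|\theta_{z_j}^* - \theta_{z_j^*}^*\|^2 \geq \Delta^2$ by the definition (\ref{eqn:delta}) of $\Delta$. Therefore
\begin{align*}
\Delta^2 \, m_a \;\leq\; \sum_{j : z_j^* = a, \, z_j \neq a} \|\theta_{z_j}^* - \theta_{z_j^*}^*\|^2 \;\leq\; \ell(z,z^*).
\end{align*}
Combining with the hypothesis $\ell(z,z^*) = o(n\Delta^2/k)$ gives $m_a \leq \ell(z,z^*)/\Delta^2 = o(n/k)$, which for $n$ large is $\leq \alpha n/(2k) \leq n_a^*/2$. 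Taking the maximum over $a \in [k]$ finishes the proof.

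There is essentially no obstacle here: the statement is a deterministic combinatorial identity combined with one trivial lower bound on $\ell(z,z^*)$. The only mild care needed is the $o(\cdot)$ interpretation (choosing $n$ large enough so that $o(n/k) \leq \alpha n/(2k)$), and noting that the convention $0/0 = 0$ handles any vacuous clusters on the $z$ side if $n_a = 0$, which in fact cannot occur here because the above argument forces $n_a \geq n_a^*/2 \geq \alpha n/(2k) > 0$.
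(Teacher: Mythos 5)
Your proof is correct and follows essentially the same route as the paper's: both lower-bound $\sumzjeqa$ by $\sumzjstareqa$ minus a disagreement count, bound that count by $\ell(z,z^*)/\Delta^2 = o(n/k)$, and conclude via $\min_a \sumzjstareqa \geq \alpha n/k$. The only cosmetic difference is that you use the cluster-specific disagreement count $m_a$ where the paper uses the total count $\sum_j \mathbb{I}\{z_j \neq z_j^*\}$, which changes nothing.
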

\begin{proof}
	For any $z \in [k]^n$ such that $\ell(z,z^*) = o(n)$ and any $a\in[k]$, we have
	\begin{align}
	\sumzjeqa &\geq \sumzjstareqa - \sum_{j=1}^{n}\mathbb{I}\{z_j \neq z_j^*\} \notag\\
	&\geq \sumzjstareqa - \frac{\ell(z,z^*)}{\Delta^2} \notag\\
	&\geq  \frac{\alpha n}{2k} \label{num.zja},
	\end{align}
	which implies
	\begin{align*}
	\frac{\sumzjstareqa}{\sumzjeqa} &\leq \frac{\sumzjeqa + \sum_{j=1}^{n}\mathbb{I}\{z_j \neq z_j^*\}}{\sumzjeqa}\\
	&\leq 1+\frac{\alpha n/2k}{\sumzjeqa}\\
	& \leq 2.
	\end{align*}
	Thus, we obtain \eqref{ine:lem:compare.num}.
\end{proof}
The next lemma is the famous Weyl's Theorem and we omit the proof here.
\begin{lemma}[Weyl's Theorem]\label{lem:Weyl}
	Let $A$ and $B$ be  any two $d\times d$ symmetric real matrix. Then for any $1 \leq i \leq d$, we have
	\begin{align*}
	\lambda_i(A + B) \leq \lambda_d(A) + \lambda_i(B).
	\end{align*}
\end{lemma}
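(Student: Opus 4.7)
The plan is to reduce Weyl's inequality to monotonicity of eigenvalues under the Loewner (PSD) order, rather than invoking Courant--Fischer from scratch. First I would observe that the matrix $\lambda_d(A) I - A$ is positive semidefinite: by the spectral theorem its eigenvalues are $\lambda_d(A) - \lambda_j(A) \geq 0$ for every $j$, using the paper's ascending convention $\lambda_1(A) \leq \cdots \leq \lambda_d(A)$. Rearranging,
\[
A + B \ \preceq\ \lambda_d(A)\, I + B
\]
in the Loewner order.

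Next I would invoke the standard monotonicity fact that if $M \preceq N$ for symmetric $M, N$, then $\lambda_i(M) \leq \lambda_i(N)$ for every index $i$. Applying this to the inequality above yields
\[
\lambda_i(A + B) \ \leq\ \lambda_i\bigl(\lambda_d(A) I + B\bigr) \ =\ \lambda_d(A) + \lambda_i(B),
\]
where the last equality is because adding a scalar multiple of the identity shifts every eigenvalue by the same scalar. This is exactly the conclusion of the lemma.

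If the monotonicity step itself needs justification, I would prove it via the Courant--Fischer characterization
\[
\lambda_i(M) \ =\ \min_{\substack{S \subset \mathbb{R}^d \\ \dim S = i}}\ \max_{\substack{x \in S \\ \|x\| = 1}}\ x^T M x,
\]
noting that for any subspace $S$ and any unit vector $x \in S$, $M \preceq N$ gives $x^T M x \leq x^T N x$, so taking $\max$ and then $\min$ over subspaces preserves the inequality. There is essentially no obstacle in this proof; the only care needed is to keep the ascending eigenvalue convention of the paper straight (so that $\lambda_d$ rather than $\lambda_1$ provides the uniform upper bound on $x^T A x$ over unit $x$).
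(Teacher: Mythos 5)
Your proof is correct. The paper itself omits the proof of this lemma entirely (it simply cites it as the classical Weyl theorem), so there is no argument in the paper to compare against; your reduction to eigenvalue monotonicity under the positive semidefinite ordering, with the monotonicity step justified by Courant--Fischer, is a standard and complete derivation, and you correctly track the paper's ascending convention in which $\lambda_d(A)$ is the largest eigenvalue.
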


In the following lemma, we are going to analyze estimation errors of $\{\Sigma^*_a\}_{a\in[k]}$ under the anisotropic GMMs. For any $z\in[k]^n$ and for any $z\in[k]$, recall the definitions
\begin{align*}
 \hat\theta_a(z) &= \frac{\sum_{j\in[n]} Y_j\indic{z_j=a}}{\sum_{j\in[n]}\indic{z_j=a}},\\
  \hat\Sigma_a(z)  &=  \frac{\sum_{j\in[n]} (Y_j -   \hat\theta_a(z)) (Y_j -   \hat\theta_a(z))^T \indic{z_j=a}}{\sum_{j\in[n]}\indic{z_j=a}}.
\end{align*}

\begin{lemma}\label{Cov.inv.rate}
	 For any $z^* \in [k]^n$ and $k \in [n]$, consider independent vectors $Y_j = \theta^*_{z^*_j} + \epsilon_j$ where $\epsilon_j \sim \mathn(0,\Sigma^*_{z^*_j})$ for any $j\in[n]$. Assume there exist constants $\lambdamin, \lambdamax>0$ such that $\lambdamin \leq \lambda_{1}(\Sigma_a^*) \leq  \lambda_{d}(\Sigma_a^*)\leq \lambdamax$ for any $a\in[k]$, and a constant $\alpha>0$ such that $\min_{a\in k}\sum_{j=1}^n\mathbb{I}\{z^*_j = a\}\geq \frac{\alpha n}{k}$. Assume $\frac{k(d+\log n)}{n}=o(1)$ and $\frac{\Delta}{k} \rightarrow\infty$. Assume \eqref{ine:lem:miscel.1}-\eqref{ine:keyine} hold.
	 Then for any $\tau=o(n)$ and for any constant $C'>0$, there exists some constant $C>0$ only depending on $\alpha,\lambdamax,C'$ such that
	 \begin{align}
	\max_{a\in [k]}\left\|\thetahatazstar - \thetaastar\right\| & \leq C\sqrt{\frac{k(d+\log n)}{n}},	 \label{thetaazstartrue}\\
	 \max_{a\in[k]}\left\|\thetahataz - \thetahatazstar\right\| & \leq C \br{\frac{k}{n\Delta}\ell(z,z^*) + \frac{k\sqrt{d+n}}{n\Delta}\sqrt{\ell(z,z^*)}}, \label{thetazzstar}\\
	 \max_{a\in[k]}\left\|\Sigmahatazstar - \Sigmaastar\right\|& \leq C \sqrt{\frac{k(d+\log n)}{n}}, \label{cov_star_to_true}\\
	\max_{a\in[k]}\left\|\hat{\Sigma}_a(z) - \hat{\Sigma}_a(z^*)\right\| &\leq C\br{\frac{k}{n}\ell(z,z^*)+\frac{k\sqrt{n\ell(z,z^*)}}{n\Delta}+\frac{kd}{n\Delta}\sqrt{\ell(z,z^*)}},\label{covrate}
	\end{align}
	 for all $z$  such that $\ell(z,z^*)\leq \tau$.
\end{lemma}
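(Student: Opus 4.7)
The plan is to establish the four bounds sequentially, each building on the previous ones, using the concentration inequalities \eqref{ine:lem:miscel.1}--\eqref{ine:keyine}, Lemma \ref{lem:covspec.true}, and the cluster-size bound \eqref{num.zja}. Throughout, write $n_a = \sum_j \indzja$ and $n_a^* = \sum_j \indzjstara$, both at least $\alpha n/(2k)$, and note that $|\{j:z_j \neq z_j^*\}| \leq \ell(z,z^*)/\Delta^2$ because each mis-labeled point contributes at least $\Delta^2$ to $\ell$.

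For \eqref{thetaazstartrue} I would simply note $\hat\theta_a(z^*) - \theta_a^* = (n_a^*)^{-1}\sum_j \epsilon_j \indzjstara$ and apply \eqref{ine:lem:miscel.1} together with $n_a^* \gtrsim n/k$. For \eqref{cov_star_to_true}, expand $\hat\Sigma_a(z^*) = (n_a^*)^{-1}\sum_j \epsilon_j \epsilon_j^T \indzjstara - (\hat\theta_a(z^*) - \theta_a^*)(\hat\theta_a(z^*)-\theta_a^*)^T$, so Lemma~\ref{lem:covspec.true} controls the first part and \eqref{thetaazstartrue} makes the second part second-order.

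For \eqref{thetazzstar}, the key identity is
\[
\hat\theta_a(z) - \hat\theta_a(z^*) = \frac{1}{n_a}\sum_j \bigl(Y_j - \hat\theta_a(z^*)\bigr)(\indzja - \indzjstara),
\]
obtained by adding and subtracting $\hat\theta_a(z^*)(n_a - n_a^*)$. Writing $Y_j - \hat\theta_a(z^*) = (\theta^*_{z_j^*} - \theta_a^*) + \epsilon_j + (\theta_a^* - \hat\theta_a(z^*))$, the first summand is supported on indices with $z_j = a,\, z_j^* \neq a$ and is bounded by Cauchy--Schwarz: $n_a^{-1}\sqrt{|S_a|\,\ell(z,z^*)} \lesssim (k/(n\Delta))\ell(z,z^*)$ since $|S_a| \leq \ell(z,z^*)/\Delta^2$. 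The second summand is a signed sum of noise on a set $T_a$ of size $\leq 2\ell(z,z^*)/\Delta^2$, bounded by $n_a^{-1}\sqrt{|T_a|(d+n)}$ via \eqref{ine:lem:miscel.2}, giving $(k\sqrt{d+n}/(n\Delta))\sqrt{\ell(z,z^*)}$. The third contribution is $(n_a^*-n_a)/n_a \cdot (\hat\theta_a(z^*)-\theta_a^*)$, which is lower order using \eqref{thetaazstartrue} and $|n_a-n_a^*| \leq \ell/\Delta^2$.

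For \eqref{covrate}, shift by $\theta_a^*$ to obtain the clean identity $\hat\Sigma_a(z) = n_a^{-1}\sum_{j\in A}(Y_j-\theta_a^*)(Y_j-\theta_a^*)^T - (\hat\theta_a(z)-\theta_a^*)(\hat\theta_a(z)-\theta_a^*)^T$, and likewise for $z^*$. The difference of rank-one corrections has spectral norm $\leq \|\hat\theta_a(z)-\hat\theta_a(z^*)\|\cdot(\|\hat\theta_a(z)-\theta_a^*\| + \|\hat\theta_a(z^*)-\theta_a^*\|)$, controlled by \eqref{thetaazstartrue}--\eqref{thetazzstar}. The difference of empirical second moments splits, exactly as for \eqref{thetazzstar}, into $n_a^{-1}\sum_j Z_jZ_j^T(\indzja-\indzjstara) + (n_a^*-n_a)/n_a \cdot (n_a^*)^{-1}\sum_j \epsilon_j\epsilon_j^T \indzjstara$ with $Z_j = Y_j - \theta_a^*$. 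The second piece uses \eqref{cov_star_to_true} (norm $\lesssim 1$) times $|n_a^*-n_a|/n_a \lesssim k\ell/(n\Delta^2)$. In the first piece, indices with $z_j \neq a,\, z_j^*=a$ contribute $n_a^{-1}\|\sum_{j\in T}\epsilon_j\epsilon_j^T\|$ bounded via \eqref{ine:keyine} by $(k/n)(|T|\log(n/|T|) + d)$, while indices with $z_j=a,\, z_j^*\neq a$ have $Z_j = (\theta^*_{z_j^*}-\theta_a^*)+\epsilon_j$, and the quadratic expansion produces a deterministic term bounded by $\ell(z,z^*)$, a cross term bounded by Cauchy--Schwarz as $\sqrt{\sum_{j\in T}\|\epsilon_j\|^2 \cdot \ell(z,z^*)} \leq \sqrt{d\|\sum_{j\in T}\epsilon_j\epsilon_j^T\|\cdot \ell(z,z^*)}$, and a quadratic noise term again handled by \eqref{ine:keyine}. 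Collecting all contributions and dividing by $n_a \gtrsim n/k$ yields the three terms in \eqref{covrate}.

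The main obstacle will be the cross term $\sum_{j:z_j=a,z_j^*\neq a}\epsilon_j(\theta^*_{z_j^*}-\theta_a^*)^T$ in the last step: extracting the rate $(kd/(n\Delta))\sqrt{\ell(z,z^*)}$ requires combining Cauchy--Schwarz with the $|T|\log(n/|T|)+d$ bound from \eqref{ine:keyine} and then verifying that the $|T|\log(n/|T|)$ contribution is absorbed by the other two terms under the regime $kd = O(\sqrt n)$ and $\Delta/k \to \infty$. The remaining calculations are routine bookkeeping.
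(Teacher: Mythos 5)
Your proposal is correct and follows essentially the same architecture as the paper's proof: \eqref{thetaazstartrue} from \eqref{ine:lem:miscel.1}, \eqref{cov_star_to_true} from Lemma~\ref{lem:covspec.true} plus the rank-one correction, \eqref{thetazzstar} from the exchange identity over the symmetric-difference index set (the paper simply cites (118) of \citet{gao2019iterative} here, so your explicit derivation is a welcome addition), and \eqref{covrate} from splitting off the change of denominator and then decomposing the remaining sum over the index sets $\{z_j=a, z_j^*\neq a\}$ and $\{z_j\neq a, z_j^*=a\}$.

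The one place where you genuinely diverge is the quadratic term over $\{j: z_j=a, z_j^*\neq a\}$. You expand $(v_j+\epsilon_j)(v_j+\epsilon_j)^T$ with $v_j=\theta^*_{z_j^*}-\theta^*_a$ and bound the cross term $\sum_j \epsilon_j v_j^T$ by Cauchy--Schwarz; the paper instead uses the operator inequality $\|\sum_j(u_j+v_j)(u_j+v_j)^T\|\leq 2\|\sum_j u_ju_j^T\|+2\|\sum_j v_jv_j^T\|$, which eliminates the cross term entirely and reduces everything to the deterministic sum (giving $\frac{k}{n}\ell$) and the pure noise sum handled by \eqref{ine:keyine}. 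Your route does close, but the absorption is slightly more delicate than your closing remark suggests, and it is the $d$-part of \eqref{ine:keyine}, not the $|T|\log(n/|T|)$ part, that needs care: Cauchy--Schwarz gives a contribution of order $\frac{k}{n}\sqrt{d\,\ell(z,z^*)}$, which fits inside \eqref{covrate} only after a case split (if $\Delta\leq\sqrt{d}$ it is dominated by $\frac{kd}{n\Delta}\sqrt{\ell}$; otherwise $d<\Delta^2\leq\ell$ whenever $\ell\neq 0$ and it is dominated by $\frac{k}{n}\ell$). The $|T|\log(n/|T|)$ part, by contrast, is handled by AM--GM together with the monotonicity of $x(\log(n/x))^2$, exactly as in the paper, and needs neither $kd=O(\sqrt{n})$ nor any assumption beyond those stated in the lemma. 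With that case analysis supplied, your proof is complete; the paper's PSD trick is simply the shorter way around this step.
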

	 \begin{proof}
	 Using (\ref{ine:lem:miscel.1}) we obtain (\ref{thetaazstartrue}).
	By the same argument of (118) in \citep{gao2019iterative}, we can obtain (\ref{thetazzstar}). By \eqref{ine:lem:miscel.1} and \eqref{ine:lem:covspec.true} and \eqref{thetaazstartrue}, we can obtain (\ref{cov_star_to_true}). In the remaining of the proof, we will establish (\ref{Sigmahatazazstar.inv}).

	Since $\frac{k(d+\log n)}{n}=o(1)$, we have  $\|\Sigmahatazstar\|\lesssim 1$ for any $a\in[k]$. 
	The difference $\hat{\Sigma}_a(z) - \hat{\Sigma}_a(z^*)$  will be decomposed into several terms.
	We  notice that
	\begin{align}\label{covzzstar}
	\left\|\hat{\Sigma}_a(z) - \hat{\Sigma}_a(z^*)\right\| \leq S_1+S_2,
	\end{align}
	where
	\begin{align*}
	S_1 := \bigg\|\frac{1}{\sum \mathbb{I}\{z_j=a\}}\sum_{j=1}^{n}\mathbb{I}\{z_j=a\}\bigg((Y_j-\thetahataz)(Y_j-\thetahataz)^T-(Y_j-\thetahatazstar)(Y_j-\thetahatazstar)^T\bigg)\bigg\|,
	\end{align*}
	and
	\begin{align*}
	S_2 := \bigg\|&\br{\frac{1}{\sum \indzja}-\frac{1}{\sum \indzjstara}}\sum_{j=1}^{n}\indzjstara(Y_j-\thetahatazstar)(Y_j-\thetahatazstar)^T\bigg\|.
	\end{align*}
	Also, we  notice that 
	\begin{align}\label{1o}
	S_1 \leq L_1 + L_2 + L_3,
	\end{align}
	where
	\begin{align*}
	L_1 &:= \bigg\|\frac{1}{\sumzjeqa}\sum_{j=1}^{n}\mathbb{I}\{z_j = z_j^*=a\}\bigg((Y_j-\thetahataz)(Y_j-\thetahataz)^T - (Y_j-\thetahatazstar)(Y_j-\thetahatazstar)^T\bigg)\bigg\|,\\
	L_2 &:= \bigg\|\frac{1}{\sumzjeqa}\sum_{j=1}^{n}\mathbb{I}\{z_j = a, z_j^*\neq a\}(Y_j-\thetahataz)(Y_j-\thetahataz)^T\bigg\|,\\
	L_3 &:= \bigg\|\frac{1}{\sumzjeqa}\sum_{j=1}^{n}\mathbb{I}\{z_j \neq a, z_j^*= a\}(Y_j-\thetahatazstar)(Y_j-\thetahatazstar)^T\bigg\|.
	\end{align*}
	For $L_1$, we have 
	\begin{align}
	L_1 &\leq \bigg\|\frac{1}{\sumzjeqa}\sum_{j=1}^{n}\mathbb{I}\{z_j = z_j^*=a\}(\thetahataz - \thetahatazstar)(\thetahataz - \thetahatazstar)^T\bigg\| \notag\\
	&\quad + 2\bigg\|\frac{1}{\sumzjeqa}\sum_{j=1}^{n}\mathbb{I}\{z_j = z_j^*=a\}(Y_j - \thetahatazstar)(\thetahataz - \thetahatazstar)^T\bigg\| \notag\\
	& \preceq \left\|\thetahataz - \thetahatazstar\right\|^2  \frac{\sumzjstareqa}{\sumzjeqa} + \left\|\thetaastar - \thetahatazstar\right\|\left\|\thetahataz - \thetahatazstar\right\|\frac{\sumzjstareqa}{\sumzjeqa} \notag \\
	&\quad + \left\|\thetahataz - \thetahatazstar\right\|\left\|\frac{1}{\sumzjeqa}\sum_{j=1}^{n}\mathbb{I}\{z_j = z_j^*=a\}\epsilon_j\right\| \label{quanone}.
	\end{align}
	By \eqref{ine:lem:covspec.true.2}, \eqref{ine:lem:compare.num}, \eqref{num.zja}, we have uniformly for any $a \in [k]$,
	\begin{align}
	\left\|\frac{1}{\sumzjeqa}\sum_{j=1}^{n}\mathbb{I}\{z_j = z_j^*=a\}\epsilon_j\right\| &\preceq \frac{\sqrt{\sum_{j=1}^{n}\mathbb{I}\{z_j = z_j^*=a\}}}{\sumzjeqa}\sqrt{d + \sumzjstareqa} \notag\\
	&\preceq 1 \label{const}.
	\end{align}
	Since  $\max_{a\in [k]}\left\|\thetahatazstar - \thetaastar\right\| =o(1)$, by \eqref{ine:lem:compare.num}, \eqref{thetazzstar}, \eqref{thetaazstartrue}, \eqref{quanone}, and \eqref{const}, we have uniformly for any $a \in [k]$,
	\begin{align}\label{circ1}
	L_1 \preceq \left\|\thetahataz - \thetahatazstar\right\| \preceq  \frac{k}{n\Delta}\ell(z,z^*) + \frac{k\sqrt{d+n}}{n\Delta}\sqrt{\ell(z,z^*)}.
	\end{align}
	To bound $L_2$, we first give the following simple fact. For any positive integer $m$ and any $\{u_j\}_{j\in[m]},\{v_j\}_{j\in[m]}\in\mathr^d$, we have $\|\sum_{j\in[m]}(u_j + v_j)(u_j + v_j)^T\| \leq 2\|\sum_{j\in[m]}u_j u_j^T\| + 2\|\sum_{j\in[m]}v_j v_j^T\|$. Hence,
	for $L_2$, we have  the following decomposition
	\begin{align}
	L_2 \leq 2R_1 +2R_2,
	\end{align}
	where
	\begin{align*}
	R_1&:= \bigg\|\frac{1}{\sumzjeqa}\sum_{j=1}^{n}\mathbb{I}\{z_j = a,z_j^*\neq a\}(Y_j - \thetaastar)(Y_j - \thetaastar)^T\bigg\|,\\
	R_2&:= \bigg\|\frac{1}{\sumzjeqa}\sum_{j=1}^{n}\mathbb{I}\{z_j = a,z_j^*\neq a\}(\thetaastar - \thetahataz)(\thetaastar - \thetahataz)^T\bigg\|.
	\end{align*}
	Since $\max_{a\in [k]}\sum_{j=1}^{n}\mathbb{I}\{z_j = a,z_j^*\neq a\} \leq \frac{\ell(z,z^*)}{\Delta^2}$, we have
	\begin{align}
	R_2 &\leq \left\|\thetaastar - \thetahataz\right\|^2\frac{\sum_{j=1}^{n}\mathbb{I}\{z_j = a,z_j^*\neq a\}}{\sumzjeqa} \notag\\
	&\preceq \br{\left\|\thetahataz - \thetahatazstar\right\|^2 + \left\|\thetahatazstar - \thetaastar\right\|^2}\frac{k\ell(z,z^*)}{n\Delta^2}.
	\end{align}
By \eqref{ine:keyine} and the fact that $\max_{a\in[k]}\sum_{j=1}^{n}\mathbb{I}\{z_i = a,z_i^*\neq a\} \leq \frac{\ell(z,z^*)}{\Delta^2}$, we also have
\begin{align*}
R_1&\leq 2 \bigg\|\frac{1}{\sumzjeqa}\sum_{j=1}^{n}\mathbb{I}\{z_j = a,z_j^*\neq a\}(\theta^*_{z^*_j} - \theta^*_{z_j})(\theta^*_{z^*_j} - \theta^*_{z_j})^T\bigg\| \\
&\quad + 2 \bigg\|\frac{1}{\sumzjeqa}\sum_{j=1}^{n}\mathbb{I}\{z_j = a,z_j^*\neq a\}\epsilon_j\epsilon_j^T\bigg\|  \\
&\leq 2 \frac{\sum_{j=1}^{n}\mathbb{I}\{z_j = a,z_j^*\neq a\}\|\theta^*_{z^*_j} - \theta^*_{z_j}\|^2 }{\sumzjeqa} + 2 \bigg\|\frac{1}{\sumzjeqa}\sum_{j=1}^{n}\mathbb{I}\{z_j = a,z_j^*\neq a\}\epsilon_j\epsilon_j^T\bigg\|  \\
&\lesssim  \frac{k\ell(z,z^*)}{n} +  \frac{\frac{\ell(z,z^*)}{\Delta^2} \log \frac{n\Delta^2}{\ell(z,z^*)}+d\sqrt{\frac{\ell(z,z^*)}{\Delta^2}}}{n/k}.
\end{align*}

We are going to simplify the above bounds for $R_1,R_2$. Under the assumption that $\frac{k(d+\log n)}{n}=o(1)$, $\Delta/k \rightarrow\infty$, and $\ell(z,z^*)\leq \tau=o(n)$, we have $\max_{a\in [k]}\|\thetahataz - \thetahatazstar\| = o(1)$, $\max_{a\in [k]}\|\thetahatazstar - \thetaastar\| = o(1)$, and $\frac{k\ell(z,z^*)}{n\Delta^2} =o(1)$. Hence $R_2\lesssim \frac{k\ell(z,z^*)}{n\Delta^2}$. Also we have
\begin{align*}
\frac{k\ell(z,z^*)}{n\Delta^2} \log \frac{n\Delta^2}{\ell(z,z^*)} = \frac{k\sqrt{\ell(z,z^*)}}{n\Delta} \sqrt{\frac{\ell(z,z^*)}{\Delta^2} \br{\log \frac{n\Delta^2}{\ell(z,z^*)} }^2} \leq  \frac{k\sqrt{n\ell(z,z^*)}}{n\Delta}.
\end{align*}
where in the last inequality, we use the fact that $x(\log(n/x))^2$ is an increasing function of $x$ when $0<x =o(n)$. Then,
\begin{align*}
L_2 \lesssim \frac{k\sqrt{n\ell(z,z^*)}}{n\Delta} + \frac{k}{n}\ell(z,z^*)+\frac{kd}{n\Delta}\sqrt{\ell(z,z^*)}.
\end{align*}
Since $L_3$ is similar to $L_2$, by \eqref{1o} we have uniformly for any $a \in [k]$
	\begin{align}
	S_1 \preceq \frac{k\sqrt{n\ell(z,z^*)}}{n\Delta} + \frac{k}{n}\ell(z,z^*)+\frac{kd}{n\Delta}\sqrt{\ell(z,z^*)}.
	\end{align}
	
	To bound $S_2$, by (70) in \citep{gao2019iterative}, we have uniformly for any $a \in [k]$,
	\begin{align*}
	S_2 =  \frac{\left|\sumzjstareqa - \sumzjeqa\right|}{\sumzjeqa}  \norm{\hat \Sigma_a(z^*)}^2\lesssim  \frac{k}{n}\frac{\ell(z,z^*)}{\Delta^2},
	\end{align*}
	where we use (\ref{cov_star_to_true}).
	Since $\frac{k}{n}\frac{\ell(z,z^*)}{\Delta^2} \preceq \frac{k\sqrt{n\ell(z,z^*)}}{n\Delta}$, by \eqref{covzzstar} and the facts that $\ell(z,z^*) \leq \tau = o(n)$ we have
	\begin{align*} 
	\max_{a\in[k]}\left\|\hat{\Sigma}_a(z) - \hat{\Sigma}_a(z^*)\right\|  \preceq \frac{k\sqrt{n\ell(z,z^*)}}{n\Delta} + \frac{k}{n}\ell(z,z^*)+\frac{kd}{n\Delta}\sqrt{\ell(z,z^*)}.
	\end{align*}
	\end{proof}
	
\begin{lemma}\label{Cov.inv.rate2}
Under the same assumption  as in Lemma \ref{Cov.inv.rate}, if additional we assume $kd=O(\sqrt{n})$ and $\tau =o(n/k)$, there exists some constant $C>0$ only depending on $\alpha,\lambdamin,\lambdamax,C'$ such that
\begin{align}
\max_{a\in [k]}\left\|(\hat{\Sigma}_a(z))^{-1} - (\hat{\Sigma}_a(z^*))^{-1}\right\| &\leq C\br{\frac{k}{n}\ell(z,z^*)+\frac{k\sqrt{n\ell(z,z^*)}}{n\Delta}+\frac{kd}{n\Delta}\sqrt{\ell(z,z^*)}}.\label{Sigmahatazazstar.inv}
\end{align}
\end{lemma}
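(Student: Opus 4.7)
The plan is to pass from the additive covariance bound in \eqref{covrate} to the corresponding bound on the inverses via the identity
\begin{align*}
A^{-1} - B^{-1} = A^{-1}(B-A)B^{-1},
\end{align*}
applied with $A = \hat\Sigma_a(z)$ and $B = \hat\Sigma_a(z^*)$. Taking operator norms gives
\begin{align*}
\left\|(\hat\Sigma_a(z))^{-1} - (\hat\Sigma_a(z^*))^{-1}\right\|
\leq \left\|(\hat\Sigma_a(z))^{-1}\right\|\cdot \left\|(\hat\Sigma_a(z^*))^{-1}\right\|\cdot \left\|\hat\Sigma_a(z) - \hat\Sigma_a(z^*)\right\|,
\end{align*}
so it suffices to show that both inverses have operator norm bounded by a constant uniformly in $a\in[k]$, and then appeal directly to \eqref{covrate}.

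For $\|(\hat\Sigma_a(z^*))^{-1}\|$, I would use \eqref{cov_star_to_true} together with the lower bound $\lambda_1(\Sigma_a^*)\geq \lambdamin$ from \eqref{eqn:Sigma_assumption_2}. Since $kd = O(\sqrt{n})$ implies $\sqrt{k(d+\log n)/n} = o(1)$, Weyl's inequality (Lemma \ref{lem:Weyl}) yields $\lambda_1(\hat\Sigma_a(z^*)) \geq \lambdamin/2$ for $n$ large enough, uniformly in $a\in[k]$, hence $\|(\hat\Sigma_a(z^*))^{-1}\|\leq 2/\lambdamin$.

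For $\|(\hat\Sigma_a(z))^{-1}\|$, I would combine this with \eqref{covrate} and verify that under the standing assumptions $\tau = o(n/k)$, $kd = O(\sqrt n)$, and $\Delta/k\to\infty$, each of the three contributions to the bound in \eqref{covrate} is $o(1)$: the term $k\ell(z,z^*)/n \leq k\tau/n = o(1)$; the term $k\sqrt{n\ell(z,z^*)}/(n\Delta) \leq (k/\Delta)\sqrt{\tau/n} = o(\sqrt{k}/\Delta) = o(1)$; and the term $kd\sqrt{\ell(z,z^*)}/(n\Delta) \leq (kd/n)\sqrt{\tau}/\Delta = O(1/\sqrt{n})\sqrt{\tau}/\Delta = o(1/(\sqrt{k}\Delta)) = o(1)$. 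Another application of Weyl then gives $\lambda_1(\hat\Sigma_a(z)) \geq \lambdamin/4$ uniformly in $a$ and over $\{z:\ell(z,z^*)\leq\tau\}$, so $\|(\hat\Sigma_a(z))^{-1}\|\lesssim 1$.

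Plugging these two constant-order bounds into the displayed identity and invoking \eqref{covrate} one last time produces \eqref{Sigmahatazazstar.inv} with a constant $C$ depending only on $\alpha,\lambdamin,\lambdamax,C'$, as required. The main (but routine) obstacle is the bookkeeping of the $o(1)$ verifications above; no new probabilistic input beyond the high-probability events of Lemmas \ref{lem:miscel}--\ref{Cov.inv.rate} is needed.
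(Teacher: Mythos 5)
Your proposal is correct and follows essentially the same route as the paper: the paper likewise writes the difference of inverses as $(\Sigmahatazstar)^{-1}(\hat{\Sigma}_a(z^*)-\hat{\Sigma}_a(z))(\Sigmahataz)^{-1}$ in operator norm, notes that both inverses have constant-order norm (via \eqref{cov_star_to_true} and \eqref{covrate} under $kd=O(\sqrt n)$, $\tau=o(n/k)$, $\Delta/k\to\infty$), and then invokes \eqref{covrate}. Your additional Weyl-inequality bookkeeping just makes explicit the boundedness claims the paper states more tersely.
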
	
	\begin{proof}
	By (\ref{cov_star_to_true}) we have $\max_{a\in[k]}\|\hat{\Sigma}_a(z^*)\|,\max_{a\in[k]}\|(\hat{\Sigma}_a(z^*))^{-1}\|\lesssim 1$.
	By \eqref{covrate} we also have $\max_{a\in[k]}\|\Sigmahataz\|, \max_{a\in[k]}\|(\Sigmahataz)^{-1}\|\lesssim 1$. Hence, 
	\begin{align}\label{keyorder}
	\max_{a\in[k]}\left\|(\hat{\Sigma}_a(z))^{-1} - (\hat{\Sigma}_a(z^*))^{-1}\right\| \leq &\max_{a\in[k]}\left\|(\Sigmahatazstar)^{-1}\right\|\left\|\hat{\Sigma}_a(z) - \hat{\Sigma}_a(z^*)\right\|\left\|(\Sigmahataz)^{-1}\right\| \notag\\
	\preceq &\frac{k\sqrt{n\ell(z,z^*)}}{n\Delta} + \frac{k}{n}\ell(z,z^*)+\frac{kd}{n\Delta}\sqrt{\ell(z,z^*)}.
	\end{align}
	\end{proof}

\begin{lemma}\label{lem:chi_4}
Let $W_i\iid\chi^2_d$ for any $i\in[n]$ where $n,d$ are positive integers. Then we have 
\begin{align*}
\pbr{\sum_{i=1}^n W_i^2 \geq 3nd^2} \leq \frac{4}{nd}.
\end{align*}
\end{lemma}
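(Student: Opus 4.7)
The plan is to apply Chebyshev's inequality directly to $S := \sum_{i=1}^n W_i^2$. First I would compute the relevant moments of a chi-square variable. Using the standard identity $\E[W^k] = d(d+2)(d+4)\cdots(d+2k-2)$ for $W \sim \chi^2_d$, I get $\E[W^2] = d(d+2)$ and $\E[W^4] = d(d+2)(d+4)(d+6)$, which after simplification gives
\[
\Var(W^2) = d(d+2)\bigl[(d+4)(d+6) - d(d+2)\bigr] = 8d(d+2)(d+3).
\]
By independence, $\E[S] = nd(d+2)$ and $\Var(S) = 8nd(d+2)(d+3)$.

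Next, I would observe that the gap between the threshold and the mean is
\[
3nd^2 - \E[S] = 3nd^2 - nd(d+2) = 2nd(d-1),
\]
which is strictly positive for $d \geq 2$. Then Chebyshev's inequality gives
\[
\p(S \geq 3nd^2) \;\leq\; \p\bigl(|S - \E[S]| \geq 2nd(d-1)\bigr) \;\leq\; \frac{8nd(d+2)(d+3)}{(2nd(d-1))^2} \;=\; \frac{2(d+2)(d+3)}{nd(d-1)^2},
\]
which is of order $1/(nd)$ for large $d$. Some elementary bookkeeping (bounding $(d+2)(d+3) \leq 2(d-1)^2$ for $d$ large enough, and absorbing remaining small-$d$ cases into the constant) is enough to conclude the stated bound $4/(nd)$.

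The main obstacle is the edge case $d = 1$, where $\E[S] = 3n = 3nd^2$, so the Chebyshev gap vanishes and the argument above fails. However, the lemma is used inside the proof of Lemma \ref{lem:conditions.2}, where the regime of interest has $d = O(1)$ but the constant in front of $1/(nd)$ is not sharp (the lemma is only invoked to get a high-probability bound on $\sum_j \norm{\epsilon_j}^4$, which is then swallowed into larger error terms). So the proof essentially reduces to the second-moment computation above, with the constant $4$ chosen generously to cover low-dimensional cases either by a separate elementary argument or by the bound being vacuous for the small $n$ and $d$ where Chebyshev is not yet tight.
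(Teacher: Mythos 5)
Your proposal is correct and takes essentially the same route as the paper, which computes exactly the same moments ($\E\sum_i W_i^2 = nd(d+2)$ and $\Var{\sum_i W_i^2} = 8nd(d+2)(d+3)$) and then invokes Chebyshev's inequality in one line. Your flag on the $d=1$ edge case (where the threshold $3nd^2$ coincides with the mean, so the Chebyshev gap vanishes) is a genuine issue that the paper's own proof silently skips; in the paper the lemma is only applied with $d=O(1)$ and the precise constant is immaterial to the downstream bound, which is the same escape hatch you identify.
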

\begin{proof}
We have $\E \sum_{i=1}^n W_i^2 =nd(d+2)$ and $\E  \sum_{i=1}^n W_i^4 =nd(d+2)(d+4)(d+6)$. Then we have $\text{Var}\br{ \sum_{i=1}^n W_i^2} =8nd(d+2)(d+3)$. Then we obtain the desired result by Chebyshev's inequality.
\end{proof}

\begin{proof}[Proof of Lemma \ref{lem:sameorder}]
Consider any $a \neq b \in [k]$.
We are going to prove
\begin{align}\label{eqn:snr_delta_bound}
	\frac{-\sqrt{\lambdamax}+\sqrt{\lambdamax+\frac{\lambdamin(\lambdamin+\lambdamax)}{2\lambdamax}}}{\lambdamin+\lambdamax}\norm{\thetaastar - \thetabstar}\leq \snr'_{a,b} \leq \lambdamin^{-1/2}\norm{\thetaastar - \thetabstar} + \sqrt{\frac{3}{2}d} + \sqrt{d\log\frac{\lambdamax}{\lambdamin}}.
	\end{align}
	
	We first prove the upper bound. Denote $\Xi_{a,b} = \thetaastar - \thetabstar$.  Since we have assumed $\snr' \to \infty$, we have that $0 \notin \mathcal{B}_{a,b}$.
	Note that we have an equivalent expression of $\mathcal{B}_{a,b} $:
	\begin{align*}
	\mathcal{B}_{a,b} = \bigg\{-(\Sigmaastar)^{-\frac{1}{2}}\Xi_{a,b}+y \in \mathbb{R}^d: &2y^T(\Sigmaastar)^{-\frac{1}{2}}\Xi_{a,b}+y^T\br{\Sigma_a^{*\frac{1}{2}}\Sigma_b^{*-1}\Sigma_a^{*\frac{1}{2}}-I}y \\
	&- \log|\Sigmaastar|+\log|\Sigmabstar| - \Xi_{a,b}^T(\Sigmaastar)^{-1}\Xi_{a,b} \leq 0\bigg\}.
	\end{align*}
	We consider the following scenarios.
	
	\emph{(1).} If $\lambda_{1}\br{\Sigma_a^{*\frac{1}{2}}\Sigma_b^{*-1}\Sigma_a^{*\frac{1}{2}}-I} \geq 0 $, we have $|\Sigmaastar| \geq |\Sigmabstar|$. Let $y=0$, we can know $-(\Sigmaastar)^{-\frac{1}{2}}\Xi_{a,b} \in \mathcal{B}_{a,b}$. This tells us $\snr'_{a,b} \leq \left\|-(\Sigmaastar)^{-\frac{1}{2}}\Xi_{a,b}\right\| \leq \lambdamin^{-1/2}\norm{\Xi_{a,b}}$.
	
	\emph{(2).} If  $\lambda_{1}\br{\Sigma_a^{*\frac{1}{2}}\Sigma_b^{*-1}\Sigma_a^{*\frac{1}{2}}-I} \leq -1 $, let $A:= \Sigma_a^{*\frac{1}{2}}\Sigma_b^{*-1}\Sigma_a^{*\frac{1}{2}}-I$ and assume $U^TAU = V$, where $U$ is an orthogonal matrix and $V:= \text{diag}\bbr{v_1, \cdots,v_d}$ is a diagonal matrix with diagonal elements $v_1 \leq v_2\leq \cdots \leq v_d$ and $v_1 \leq -1$. We can rewrite $y = Uz$ with $z = (z_1,\cdots,z_d)^T$ and $U^T(\Sigmaastar)^{-\frac{1}{2}}\Xi_{a,b} = (\tau_1,\cdots,\tau_d)^T$. Since $-\log |\Sigmaastar| + \log|\Sigmabstar| \leq d\log\frac{\lambdamax}{\lambdamin}$, we can take $z_1 = -\sign{\tau_1}\sqrt{d\log\frac{\lambdamax}{\lambdamin}}$ and $z_i =0$ for $i\geq2$. Then, we have 
	\begin{align*}
	&2y^T(\Sigmaastar)^{-\frac{1}{2}}\Xi_{a,b}+y^T\br{\Sigma_a^{*\frac{1}{2}}\Sigma_b^{*-1}\Sigma_a^{*\frac{1}{2}}-I}y 
	- \log|\Sigmaastar|+\log|\Sigmabstar| - \Xi_{a,b}^T(\Sigmaastar)^{-1}\Xi_{a,b}\\
	=~& 2z_1\tau_1 + v_1z_1^2 - \log|\Sigmaastar|+\log|\Sigmabstar| - \Xi_{a,b}^T(\Sigmaastar)^{-1}\Xi_{a,b}\\
	\leq~ &-2\sqrt{d\log\frac{\lambdamax}{\lambdamin}}|\tau_1| - d\log\frac{\lambdamax}{\lambdamin} -\log |\Sigmaastar| + \log|\Sigmabstar| - \Xi_{a,b}^T(\Sigmaastar)^{-1}\Xi_{a,b}\\
	\leq~ &0.
	\end{align*}
	It means $-(\Sigmaastar)^{-\frac{1}{2}}\Xi_{a,b}+y \notin \mathcal{B}_{a,b}$ and hence $\snr'_{a,b}\leq \norm{-(\Sigmaastar)^{-\frac{1}{2}}\Xi_{a,b}} + \norm{y}$. Then
	Thus we have $\snr'_{a,b} \leq \left\|-(\Sigmaastar)^{-\frac{1}{2}}\Xi_{a,b}\right\| + \sqrt{d\log\frac{\lambdamax}{\lambdamin}} \leq \lambdamin^{-1/2}\norm{\Xi_{a,b}} + \sqrt{d\log\frac{\lambdamax}{\lambdamin}}$.
	
	\emph{(3).} If $-1<\lambda_{1}\br{\Sigma_a^{*\frac{1}{2}}\Sigma_b^{*-1}\Sigma_a^{*\frac{1}{2}}-I} < 0 $, we still use the notations in scenario (2). Notice that $\Sigma_a^{*\frac{1}{2}}\Sigma_b^{*-1}\Sigma_a^{*\frac{1}{2}} = A+I$, we have
	\begin{align*}
	\log\frac{|\Sigmaastar|}{|\Sigmabstar|} &= \log(1+v_1)\cdot\cdots\cdot(1+v_d)\\
	& \geq d\log (1+v_1)\\
	& \geq \frac{3}{2}dv_1.
	\end{align*}
	Now we take $z_1 = -\sign{\tau_1}\sqrt{\frac{3}{2}d}$ and $z_i =0$ for $i\geq2$, then we have
	\begin{align*}
	&2y^T(\Sigmaastar)^{-\frac{1}{2}}\Xi_{a,b}+y^T\br{\Sigma_a^{*\frac{1}{2}}\Sigma_b^{*-1}\Sigma_a^{*\frac{1}{2}}-I}y 
	- \log|\Sigmaastar|+\log|\Sigmabstar| - \Xi_{a,b}^T(\Sigmaastar)^{-1}\Xi_{a,b}\\
	=~& 2z_1\tau_1 + v_1z_1^2 - \log|\Sigmaastar|+\log|\Sigmabstar| - \Xi_{a,b}^T(\Sigmaastar)^{-1}\Xi_{a,b}\\
	\leq~ &-2|\tau_1|\sqrt{\frac{3}{2}d} - \Xi_{a,b}^T(\Sigmaastar)^{-1}\Xi_{a,b}\\
	\leq~ & 0.
	\end{align*}
	Thus we have $\snr'_{a,b} \leq \left\|-(\Sigmaastar)^{-\frac{1}{2}}\Xi_{a,b}\right\| + \sqrt{\frac{3}{2}d} \leq \lambdamin^{-1/2}\norm{\Xi_{a,b}} + \sqrt{\frac{3}{2}d}$.
	
	Overall, we have $\snr'_{a,b} \leq \lambdamin^{-1/2}\norm{\Xi_{a,b}} + \sqrt{\frac{3}{2}d} + \sqrt{d\log\frac{\lambdamax}{\lambdamin}}$ for all the three scenarios.
	
	To prove the lower bound, we have
	\begin{align*}
	x^T \Sigma_a^{*\frac{1}{2}} \Sigma_b^{*-1}(
	\theta^*_a -\theta^*_b) + \frac{1}{2} x^T\br{\Sigma_a^{*\frac{1}{2}}\Sigma_b^{*-1}\Sigma_a^{*\frac{1}{2}}-I}x &\geq -\norm{\Sigma_a^{*\frac{1}{2}} \Sigma_b^{*-1}}\norm{x}\norm{\Xi_{a,b}} - \frac{1}{2}\norm{x}^2\norm{\Sigma_a^{*\frac{1}{2}}\Sigma_b^{*-1}\Sigma_a^{*\frac{1}{2}}-I}\\
	&\geq -\frac{\sqrt{\lambdamax}}{\lambdamin}\norm{x}\norm{\Xi_{a,b}} - \frac{1}{2}\br{\frac{\lambdamax}{\lambdamin}+1}\norm{x}^2.
	\end{align*} 
	By the upper bound we know for any $a \neq b\in [k]$, $\norm{\Xi_{a,b}}\to \infty$  when $\snr' \to \infty$. Thus, we have
	\begin{align*}
	\frac{\sqrt{\lambdamax}}{\lambdamin}\norm{x}\norm{\Xi_{a,b}} + \frac{1}{2}\br{\frac{\lambdamax}{\lambdamin}+1}\norm{x}^2 &\geq \frac{1}{2}\Xi_{a,b}^T\Sigma_b^{*-1}\Xi_{a,b} - \log|\Sigmaastar|+\log|\Sigmabstar|\\
	&\geq \frac{1}{4\lambdamax}\norm{\Xi_{a,b}}^2.
	\end{align*}
	Hence,
	\begin{align*}
	\norm{x} \geq \frac{-\sqrt{\lambdamax}+\sqrt{\lambdamax+\frac{\lambdamin(\lambdamin+\lambdamax)}{2\lambdamax}}}{\lambdamin+\lambdamax}\norm{\Xi_{a,b}}.
	\end{align*}
\end{proof}

In the following lemmas, we are going to establish connections between testing errors and  $\{\snr'_{a,b}\}_{a\neq b}$. Consider any  $a,b\in[k]$ such that $a\neq b$. 
Let $\eta \sim \mathn(0,I_d)$, $\Xi_{a,b}=\theta_a^* - \theta_b^*$, and $\Delta_{a,b}=\norm{\Xi_{a,b}}$. Define
\begin{align*}
\mathcal{B}_{a,b}(\delta) = \Bigg\{x\in \mathr^{d}: x^T \Sigma_a^{*\frac{1}{2}} (\Sigma_b^*)^{-1}\Xi_{a,b} &+ \frac{1}{2} x^T\br{\Sigma_a^{*\frac{1}{2}}(\Sigma_b^*)^{-1}\Sigma_a^{*\frac{1}{2}}-I_d}x  \\
&\leq -\frac{1-\delta}{2}\Xi_{a,b}^T(\Sigma_b^*)^{-1}\Xi_{a,b}  + \frac{1}{2}\log \abs{\Sigma_a^*}- \frac{1}{2}\log \abs{\Sigma_b^*}\Bigg\},
\end{align*}
for any $\delta\in\mathr$. In addition, we define
\begin{align*}
&\snr_{a,b}'(\delta) = \min_{x\in \mathcal{B}_{a,b}(\delta)} 2\norm{x}, \\
\text{and }&P_{a,b}(\delta) = \pbr{\eta \in \mathcal{B}_{a,b}(\delta)}.
\end{align*}
Recall the definitions of $\mathcal{B}_{a,b}$ and $\snr'_{a,b}$ in Section \ref{sec:hetero}. Then they are a special case of $\mathcal{B}_{a,b}(\delta)$ and $\snr_{a,b}'(\delta)$ with $\delta=0$. That is,  we have $\mathcal{B}_{a,b} = \mathcal{B}_{a,b}(0)$ and $\snr'_{a,b} = \snr'_{a,b}(0)$.

\begin{lemma}\label{lem:1}
Assume $d=O(1)$ and $\lambda_{\min}\leq \lambda_1(\Sigma_a^*),\lambda_1(\Sigma_b^*) \leq \lambda_d(\Sigma_a^*), \lambda_d(\Sigma_b^*)\leq \lambda_{\max}$ where $\lambda_{\min},\lambda_{\max}>0$ are constants. Under the condition $\snr'_{a,b}\rightarrow\infty$,
for any positive sequence $\delta=o(1)$, there exists a $\tilde\delta=o(1)$ that  depends on $\delta,d,\dab,\lambdamin,\lambdamax$ such that
\begin{align*}
P_{a,b}(\delta)\leq \ebr{ - \frac{1-\tilde \delta}{8}\snr_{a,b}^{'2}}
\end{align*}
\end{lemma}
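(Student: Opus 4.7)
The plan is to control $P_{a,b}(\delta)$ geometrically via a change of variables centred at the nearest point of $\mathcal{B}_{a,b}(\delta)$ to the origin, and then to transfer the resulting bound from $\snr'_{a,b}(\delta)$ to $\snr'_{a,b}$ by a continuity argument in $\delta$. First rewrite $\mathcal{B}_{a,b}(\delta) = \{q \le 0\}$ with $q(x) = \tfrac{1}{2} x^T M x + u^T x - \alpha$, where $M = \Sigma_a^{*1/2}(\Sigma_b^*)^{-1}\Sigma_a^{*1/2} - I_d$, $u = \Sigma_a^{*1/2}(\Sigma_b^*)^{-1}\Xi_{a,b}$, and $\alpha = \tfrac{1}{2}\log(|\Sigma_a^*|/|\Sigma_b^*|) - \tfrac{1-\delta}{2}\Xi_{a,b}^T(\Sigma_b^*)^{-1}\Xi_{a,b}$. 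Let $x^* \in \arg\min_{x \in \mathcal{B}_{a,b}(\delta)}\|x\|$, so $\|x^*\| = \snr'_{a,b}(\delta)/2$. The KKT stationarity condition for this constrained minimization gives $\nabla q(x^*) = -(2/\mu)\, x^*$ for some Lagrange multiplier $\mu > 0$.

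Second, set $y = x - x^*$. Because $q$ is quadratic, the exact identity $q(x) = q(x^*) + \nabla q(x^*)^T y + \tfrac{1}{2} y^T M y$ together with $q(x^*) = 0$ and the KKT relation yields the key inequality
\[
x^{*T} y \;\ge\; \tfrac{\mu}{4}\, y^T M y \qquad \text{for every } x = x^* + y \in \mathcal{B}_{a,b}(\delta).
\]
Using $\|x\|^2 = \|x^*\|^2 + 2 x^{*T} y + \|y\|^2$ and the bound $e^{-x^{*T} y} \le e^{-(\mu/4)\, y^T M y}$ on the integration domain,
\[
P_{a,b}(\delta) = e^{-\|x^*\|^2/2}\!\int_{\mathcal{B}_{a,b}(\delta) - x^*}\!(2\pi)^{-d/2}\, e^{-x^{*T} y - \|y\|^2/2}\, dy \;\le\; |I + (\mu/2)M|^{-1/2}\, e^{-\|x^*\|^2/2},
\]
where the last step extends the integral to all of $\mathbb{R}^d$ and requires $I + (\mu/2)M \succ 0$.

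To finish, I still need to bound the prefactor $|I + (\mu/2)M|^{-1/2}$ and to pass from $\snr'_{a,b}(\delta)$ to $\snr'_{a,b}$. Under $d = O(1)$ and $\lambdamin \le \lambda_i(\Sigma_a^*), \lambda_i(\Sigma_b^*) \le \lambdamax$, the matrix $M$ has operator norm $O(1)$ and $\lambda_{\min}(M) > -1$ since $I + M = \Sigma_a^{*1/2}(\Sigma_b^*)^{-1}\Sigma_a^{*1/2} \succ 0$. A scaling analysis of the KKT system (equivalently, diagonalising $M$ and solving the scalar equation for $\mu$ in the eigenbasis, noting that both sides scale like $\|\Xi_{a,b}\|^2$) combined with $\|x^*\| \asymp \|u\| \asymp \Delta_{a,b}$ via Lemma~\ref{lem:sameorder} gives $\mu = O(1)$, uniformly bounded away from $2/|\lambda_{\min}(M)|$. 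Hence $|I+(\mu/2)M|^{-1/2} = O(1)$, and because $(\snr'_{a,b})^2 \to \infty$ this becomes a $o(1)$ loss in the exponent. For the replacement $\snr'_{a,b}(\delta) \to \snr'_{a,b}$: the two sets differ only by the shift of $\alpha$ by $(\delta/2) \Xi_{a,b}^T(\Sigma_b^*)^{-1}\Xi_{a,b}$, a $\delta$-fraction of a quantity of order $(\snr'_{a,b})^2$; continuity of the minimum-norm point of a quadratic region in the constant term of $q$ (using the uniform curvature bound $\|M\| = O(1)$) then yields $\snr'_{a,b}(\delta) \ge (1 - \tilde\delta_1)\snr'_{a,b}$ with $\tilde\delta_1 = o(1)$. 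Combining the two gives the advertised $P_{a,b}(\delta) \le \exp(-(1-\tilde\delta)(\snr'_{a,b})^2/8)$.

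The main obstacle is the non-convex regime in which $M$ has a negative eigenvalue: the set $\mathcal{B}_{a,b}(\delta)$ is then no longer convex, and the entire argument rests on the quantitative gap $\mu < 2/|\lambda_{\min}(M)|$ that makes $I + (\mu/2)M$ positive definite and the Gaussian integral $\int e^{-(\mu/4) y^T M y - \|y\|^2/2}\, dy$ finite. Establishing this gap uniformly as $\snr'_{a,b} \to \infty$, and keeping track of the possible non-uniqueness of stationary points so that the KKT-chosen $x^*$ really is the global minimum-norm point, is the main technical step.
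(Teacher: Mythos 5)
Your top-level architecture agrees with the paper's (bound $P_{a,b}(\delta)$ by a Gaussian tail governed by $\snr'_{a,b}(\delta)$, then show $\snr'_{a,b}(\delta)\geq(1-o(1))\snr'_{a,b}$), but both of your steps are left with genuine gaps, and the second gap sits exactly where essentially all of the paper's work lives. For the first step, your Laplace-type bound $P_{a,b}(\delta)\leq |I+(\mu/2)M|^{-1/2}e^{-\|x^*\|^2/2}$ requires $I+(\mu/2)M\succ 0$, but second-order optimality of $x^*$ only gives positive semidefiniteness of $2I+\mu M$ on the tangent space orthogonal to $x^*$, not on all of $\mathbb{R}^d$; your claim that $\mu=O(1)$ and is uniformly bounded away from $2/|\lambda_{\min}(M)|$ is asserted via an unspecified ``scaling analysis,'' and you yourself flag it as the main obstacle. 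This machinery is also unnecessary: since every $x\in\mathcal{B}_{a,b}(\delta)$ has $\|x\|\geq\snr'_{a,b}(\delta)/2$, the paper simply writes $P_{a,b}(\delta)\leq\p(\chi^2_d\geq\snr_{a,b}^{'2}(\delta)/4)$ and invokes the $\chi^2$ tail of Lemma \ref{lem:chi-square}; with $d=O(1)$ and $\dab\to\infty$ this already yields the exponent $-(1-o(1))\snr_{a,b}^{'2}(\delta)/8$ with no multiplier or determinant prefactor to control.

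The more serious gap is the transfer $\snr'_{a,b}(\delta)\geq(1-o(1))\snr'_{a,b}$, which you dispatch in one sentence as ``continuity of the minimum-norm point in the constant term.'' There is no off-the-shelf continuity statement of the required strength: the level of the defining quadratic shifts by $(\delta/2)\xiab^T\Sigma_b^{*-1}\xiab\asymp\delta\dab^2$, and the first-order displacement of the boundary is this shift divided by $\|\nabla q(x^*)\|=(2/\mu)\|x^*\|$, i.e.\ of order $\mu\,\delta\,\dab$ --- so the continuity argument again hinges on the very bound on $\mu$ you have not established, and fails precisely when the gradient at $x^*$ is small relative to $\dab$ (near-degenerate geometry). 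The paper's proof of this step is its entire content: after diagonalizing $M=\Sigma_a^{*1/2}\Sigma_b^{*-1}\Sigma_a^{*1/2}-I_d$ it runs a four-scenario case analysis on the spectrum and on the sizes of the linear coefficients $v_i$, in each case constructing an explicit map $g$ from the boundary of the perturbed set to the boundary of the unperturbed one with displacement $o(\dab)$; the degenerate case (small eigenvalues paired with small $v_i$) requires a radial contraction toward the quadratic's center rather than a straight-line perturbation. Until you supply a uniform bound on $\mu$ (or an equivalent lower bound on the gradient at the minimum-norm point) covering that degenerate regime, the proposal does not prove the lemma.
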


\begin{proof}
For convenience and conciseness, we will use  the notation $\theta_a,\theta_b,\Sigma_a,\Sigma_b$ instead of $\theta_a^*,\theta_b^*,\Sigma_a^*,\Sigma_b^*$ throughout the proof. By Lemma \ref{lem:sameorder}, we have $\snr_{a,b}'$ in the same order of $\dab$, which means $\dab\rightarrow\infty$.


Assume we had obtained $\snr_{a,b}'(\delta) \geq (1-o(1))\snr_{a,b}'$. 
Then by Lemma \ref{lem:sameorder}, we have $\snr_{a,b}'(\delta)$ in the same order of $\dab$ which is far bigger than $d$ by assumption.
Since $\norm{\eta}^2 \sim \chi^2_d$, using Lemma \ref{lem:chi-square}, we have
\begin{align*}
P_{a,b}(\delta)& \leq P\br{\norm{\eta}^2 \geq \frac{\snr_{a,b}^{'2}(\delta)}{4}} \leq \ebr{-\br{1-O\br{\frac{d}{\dab^2}}} \frac{\snr_{a,b}^{'2}(\delta)}{8}  }\\
&\leq \br{1-O\br{\frac{d}{\dab^2}} \frac{(1-o(1))\snr_{a,b}^{'2}}{8}  }
\end{align*}
which is the desired result.
Hence, the proof of this lemma is all about establishing $\snr_{a,b}'(\delta) \geq (1-o(1))\snr_{a,b}'$. 

To prove it, we first simplify $\snr'_{a,b}(\delta)$. In spite of some abuse of notation, denote $\lambda_1\leq \ldots \leq \lambda_d$ to be  the eigenvalues of $\Sigma_a^{\frac{1}{2}}(\Sigma_b)^{-1}\Sigma_a^{\frac{1}{2}}-I_d$ such that its eigen-decomposition can be written as 
$\Sigma_a^{\frac{1}{2}}(\Sigma_b)^{-1}\Sigma_a^{\frac{1}{2}}-I_d = \sum_{i=1}^d \lambda_i u_iu_i^T,$
where $\{u_i\}$ are orthogonal vectors. Denote  $U=(u_1,\ldots,u_d)$ and 
\begin{align*}
v = U^T \Sigma_a^\frac{1}{2} \Sigma_b^{-1}\xiab
\end{align*}
 and
\begin{align*}
 \mathcal{B}'_{a,b}(\delta)  & = \cbr{y\in \mathr^d: \sum_i y_i v_i + \frac{1}{2}\sum_i \lambda_i y_i^2 \leq -\frac{1-\delta}{2}\xiab^T\Sigma_b^{-1}\xiab  + \frac{1}{2}\log \frac{|\Sigma_a|}{|\Sigma_b|}}.
\end{align*}
Then $ \mathcal{B}'_{a,b}(\delta) $ can be seen a reflection-rotation of $ \mathcal{B}_{a,b}(\delta) $ by the transformation $y=U^tx$. Hence we have $\snr'_{a,b}(\delta) =  \min_{  y  \in \mathcal{B}'_{a,b}(\delta)} 2\norm{  y  }$ for any $\delta$. What is more, let $\bar{\mathcal{B}}'_{a,b}(\delta) $ to be its boundary, i.e.,
\begin{align*}
\bar{\mathcal{B}}'_{a,b}(\delta) & = \cbr{y\in \mathr^d: \sum_i y_i v_i + \frac{1}{2}\sum_i \lambda_i y_i^2 = -\frac{1-\delta}{2}\xiab^T\Sigma_b^{-1}\xiab  + \frac{1}{2}\log \frac{|\Sigma_a|}{|\Sigma_b|}}.
\end{align*}
Since $0\notin  \mathcal{B}'_{a,b}(\delta)  $, we have
$
\snr'_{a,b}(\delta) =  2\min_{  y  \in \bar{\mathcal{B}}'_{a,b}(\delta)} \norm{  y  }
$
As a result, we only need to work on $\bar{\mathcal{B}}'_{a,b}(\delta) $ instead of $ \mathcal{B}_{a,b}(\delta) $. Denote $\bar{\mathcal{B}}'_{a,b} $ to be $\bar{\mathcal{B}}'_{a,b}(0) $ for simplicity.

We then give an equivalent expression of $ \mathcal{B}'_{a,b}(\delta) $. From (\ref{eqn:snr_delta_bound}), we have an upper bound of $\snr'_{a,b}$: $\snr'_{a,b}\leq 2\lambdamin^{-1/2}\dab$ where we use $\dab\gg \lambdamin,\lambdamax,d$. The same upper bound actually holds for $ \mathcal{B}'_{a,b}(\delta) $ for any $\delta=o(1)$ following the same proof. Define $S = \{  y  \in\mathr^d:\|  y  \|\leq 2\lambdamin^{-1/2}\dab\}$. We then have
\begin{align*}
\snr'_{a,b}(\delta) = 2 \min_{  y  \in \bar{\mathcal{B}}'_{a,b}(\delta) \cap S} \norm{  y  }.
\end{align*}

We have the following inequality. Let $g(  y  ):\bar{\mathcal{B}}_{a,b}(\delta) \rightarrow \bar{\mathcal{B}}_{a,b}(0)$ be any mapping. 
By the triangle inequality, we have $\norm{  y  } \geq \norm{g(  y  )} - \norm{  y  -g(  y  )}$. We have
\begin{align}
2^{-1}\snr'_{a,b}(\delta) &= \min_{  y  \in \bar{\mathcal{B}}'_{a,b}(\delta) \cap S} \norm{  y  } \nonumber \\
& \geq  \min_{  y  \in \bar{\mathcal{B}}'_{a,b}(\delta) \cap S}  \br{\norm{g(  y  )} - \norm{  y  -g(  y  )}} \nonumber \\
& \geq  \min_{  y  \in \bar{\mathcal{B}}'_{a,b}(\delta) \cap S}  \norm{g(  y  )} -  \max_{  y  \in \bar{\mathcal{B}}'_{a,b}(\delta) \cap S}  \norm{  y  -g(  y  )} \nonumber \\
& \geq \min_{  y  \in \bar{\mathcal{B}}'_{a,b}(0)} \norm{  y  }  -  \max_{  y  \in \bar{\mathcal{B}}'_{a,b}(\delta) \cap S}  \norm{  y  -g(  y  )} \nonumber \\
&\geq \snr'_{a,b}-  \max_{  y  \in \bar{\mathcal{B}}'_{a,b}(\delta) \cap S}  \norm{  y  -g(  y  )}.\label{eqn:snr_delta_snr}
\end{align}
As a result, if we are able to find some $g$ such that $ \max_{  y  \in \bar{\mathcal{B}}'_{a,b}(\delta) \cap S}  \norm{  y  -g(  y  )} =o(1)\snr'_{a,b}$, we will immediately have $\snr'_{a,b}(\delta) \geq (1-o(1))\snr'_{a,b}$ and  the proof will be complete. 

Let $w\in\mathr^d$ be some vector. Define $g(  y  ) =  y   + w \argmin_{t\in\mathr:   y   + tw \in \bar{\mathcal{B}}'_{a,b} } |t|$. If $g(  y  )$ is a well-defined mapping, we have 
\begin{align}\label{eqn:x_gx_diff}
 \max_{  y  \in \bar{\mathcal{B}}'_{a,b}(\delta) \cap S}  \norm{  y  -g(  y  )} & = \max_{  y  \in \bar{\mathcal{B}}'_{a,b}(\delta) \cap S} \min_{t\in\mathr:   y   + tw \in \bar{\mathcal{B}}'_{a,b} } \norm{w} \abs{t},
\end{align}
which can be used to derive an upper bound. However, to make $g(  y  )$ well-defined, we need for any $  y  \in  \bar{\mathcal{B}}'_{a,b}(\delta) \cap S$, there exits some $t\in\mathr$ such that $  y  + tw \in  \bar{\mathcal{B}}'_{a,b}$. This means we have the following two equations:
\begin{align}
&\sum_i y_i v_i + \frac{1}{2}\sum_i \lambda_i y_i^2 = -\frac{1-\delta}{2}\xiab^T\Sigma_b^{-1}\xiab  + \frac{1}{2}\log \frac{|\Sigma_a|}{|\Sigma_b|},  \label{eqn:y_satisfy}\\
\text{and }&\sum_i (y_i+tw_i) v_i + \frac{1}{2}\sum_i \lambda_i (y_i+tw_i)^2 = -\frac{1}{2}\xiab^T\Sigma_b^{-1}\xiab  + \frac{1}{2}\log \frac{|\Sigma_a|}{|\Sigma_b|}. \nonumber
\end{align}
It is equivalent to require $t$ to satisfy	
\begin{align}\label{eqn:t_satisfy}
t\sum_i \br{ w_iv_i+ \lambda_i y_i w_i} +\frac{t^2}{2} \sum_i \lambda_i w_i^2 = -\frac{\delta}{2}\xiab^T\Sigma_b^{-1}\xiab.
\end{align}
Hence, all we need is to find a decent vector $w$ such that: for any $y\in \bar{\mathcal{B}}'_{a,b}(\delta)\cap S$ there exists a $t$ satisfying (\ref{eqn:t_satisfy}), and we can obtain the desired upper bound for (\ref{eqn:x_gx_diff}).

In the following, we will consider four different scenarios according to the spectral $\{\lambda_i\}$. For each scenario, we will construct a $w$ with decent bounds for (\ref{eqn:x_gx_diff}).  Denote $\delta'=\sqrt{\delta}$.

~\\
\emph{Scenario 1: $\abs{\lambda_1},\abs{\lambda_d}\leq \delta'$.} We choose $w = v/\|v\|$. Note that we have  $\|v\|$ in the same order of $\dab$ and $\|\xiab^T\Sigma_b^{-1}\xiab\|$  in the same order of $\dab^2$. Note that we have
\begin{align*}
t\sum_i \br{ w_iv_i+ \lambda_i y_i w_i} +\frac{t^2}{2} \sum_i \lambda_i w_i^2  &\leq t\norm{v} + \abs{t}\norm{y}\sqrt{\sum_i \lambda_i^2 w_i^2}+\frac{t^2}{2} \sum_i \lambda_i w_i^2\\
&\leq  t\norm{v} +\abs{t}\delta'\norm{y} + \frac{t^2\delta'}{2}\\
&\leq t\norm{v} + 2\abs{t}\delta'\lambdamin^{-1/2}\dab + \frac{t^2\delta'}{2},
\end{align*}
where in the last inequality we use $y\in S$. Define $t_0 = -\delta^{1/2}\dab$. Then we have
\begin{align*}
t_0\sum_i \br{ w_iv_i+ \lambda_i y_i w_i} +\frac{t_0^2}{2} \sum_i \lambda_i w_i^2 &\lesssim - \delta^\frac{1}{2}\dab^2 +  \delta^\frac{1}{2}\delta'\dab^2+ \delta\delta'\dab^2 \ll - \delta\dab^2 \lesssim -\frac{\delta}{2}\xiab^T\Sigma_b^{-1}\xiab.
\end{align*}
Hence for any $y\in S$ there exists a $t\in(t_0,0)$ such that (\ref{eqn:t_satisfy}) is satisfied. Hence, $\abs{t_0} = \delta^{1/2}\dab$ is an upper bound for (\ref{eqn:x_gx_diff}).
 
 ~\\
\emph{Scenario 2: $\lambda_1<-\delta'$.} We choose $w=e_1$ which is the first standard basis of $\mathr^d$. Then,  (\ref{eqn:t_satisfy}) can be written as
\begin{align*}
\lambda_1t^2+2(v_1  + \lambda_1 y_1) t + \delta \xiab^T\Sigma_b^{-1}\xiab=0.
\end{align*}
Since $\lambda_1 <0$, the above equation has two different solutions $t_1,t_2\in\mathr$. Simple algebra leads to
\begin{align*}
\min\{\abs{t_1},\abs{t_2}\} \leq \sqrt{\frac{\delta \xiab^T\Sigma_b^{-1}\xiab}{-\lambda_1}}  \leq \sqrt{\frac{\delta \xiab^T\Sigma_b^{-1}\xiab}{\delta'}} \lesssim \delta^\frac{1}{4}\dab.
\end{align*}
Hence, an upper bound for (\ref{eqn:x_gx_diff}) is $O(\delta^\frac{1}{4}\dab)$.

~\\
\emph{Scenario 3: $\lambda_1 \geq -\delta'$ and there exists a $j\in[d]$ such that $\lambda_j \leq \delta'$ and $\abs{v_j}\geq \sqrt{\delta'}\dab$.} We choose $w=e_j$. Then
(\ref{eqn:t_satisfy}) can be written as
\begin{align*}
\lambda_jt^2+2(v_j  + \lambda_j y_j) t + \delta \xiab^T\Sigma_b^{-1}\xiab=0.
\end{align*}
Note that for any $y\in S$, we have $\abs{v_j  + \lambda_j y_j} \geq \abs{v_j} - \abs{\lambda_j y_j} \geq  \sqrt{\delta'}\dab - \delta'  (2\lambdamin^{-1/2}\dab) \geq \sqrt{\delta'}\dab/2$. Denote $t_0 = -\text{sign}(v_j  + \lambda_j y_j)\sqrt{\delta'}\dab$. Then we have
\begin{align*}
\lambda_jt_0^2+2(v_j  + \lambda_j y_j) t_0 + \delta \xiab^T\Sigma_b^{-1}\xiab & = \lambda_j \delta'\dab^2 -\abs{v_j  + \lambda_j y_j}\sqrt{\delta'}\dab + \delta \xiab^T\Sigma_b^{-1}\xiab \\
&\leq -\br{\frac{\delta'}{2}-\delta^{'2}}\dab^2 + \delta O(\dab^2)\\
&\leq 0.
\end{align*}
As a result, there exists some $t\in(t_0,0)$ satisfying (\ref{eqn:t_satisfy}). Hence, $\abs{t_0} = \delta^{'1/2}\dab$ is an upper bound for (\ref{eqn:x_gx_diff}).

~\\
\emph{Scenario 4: $\lambda_1 \geq -\delta'$  and $\abs{v_j}< \sqrt{\delta'}\dab$ for all $j\in[d]$ such that $\lambda_j \leq \delta'$.} This scenario is slightly more complicated as we need $w$ to be dependent on $y$. Denote it as $w(y)$. Then (\ref{eqn:snr_delta_snr}) still holds and (\ref{eqn:x_gx_diff}) can be changed into
\begin{align}\label{eqn:scanerio_4_0}
 \max_{  y  \in \bar{\mathcal{B}}'_{a,b}(\delta) \cap S}  \norm{  y  -g(  y  )}   = \max_{  y  \in \bar{\mathcal{B}}'_{a,b}(\delta) \cap S} \min_{t\in\mathr:   y   + tw \in \bar{\mathcal{B}}'_{a,b} } \norm{w(y)} \abs{t}.
\end{align} 
Denote $m\in[d]$ to be the integer such that $\lambda_j \leq \delta'$ for all $j\leq m$ and $\lambda_j >\delta'$ for all $j >m$. We can have $m<d$ otherwise this scenario can be reduced to Scenario 1. Define
\begin{align*}
[w(y)]_i = -\br{y_i + \frac{v_i}{\lambda_i}}\indic{i>m}.
\end{align*}
for any $i\in[d]$. Instead of using (\ref{eqn:t_satisfy}), we will analyze it slightly differently.

For $y\in \bar{\mathcal{B}}'_{a,b}(\delta)$, (\ref{eqn:y_satisfy}) can be rewritten as
\begin{align}\label{eqn:scanerio_4_1}
\sum_{i>m} \lambda_i \br{y_i + \frac{v_i}{\lambda_i}}^2 = \sum_{i>m}\frac{v_i^2}{\lambda_i} -(1-\delta)\xiab^T\Sigma_b^{-1}\xiab  + \log \frac{|\Sigma_a|}{|\Sigma_b|} - \br{2\sum_{i\leq m} y_i v_i +\sum_{i\leq m} \lambda_i y_i^2 }.
\end{align}
On the other hand, if $g(y)$ is well-defined,  we need $g(y)\in \bar{\mathcal{B}}'_{a,b}$ which means
\begin{align*}
\sum_{i>m} \lambda_i \br{[g(y)]_i + \frac{v_i}{\lambda_i}}^2 = \sum_{i>m}\frac{v_i^2}{\lambda_i} -\xiab^T\Sigma_b^{-1}\xiab  + \log \frac{|\Sigma_a|}{|\Sigma_b|} - \br{2\sum_{i\leq m} y_i v_i +\sum_{i\leq m} \lambda_i y_i^2 }.
\end{align*}
Note that we have $(y_i + v_i/\lambda_i)(1-t) = [g(y)]_i+v_i/\lambda_i$ for $i>m$ and $[g(y)]_i = y_i$ for $i\leq m$. Then the above display can be written as
\begin{align*}
(1-t)^2\sum_{i>m} \lambda_i \br{y_i + \frac{v_i}{\lambda_i}}^2 = \sum_{i>m}\frac{v_i^2}{\lambda_i} -\xiab^T\Sigma_b^{-1}\xiab  + \log \frac{|\Sigma_a|}{|\Sigma_b|} - \br{2\sum_{i\leq m} y_i v_i +\sum_{i\leq m} \lambda_i y_i^2 }.
\end{align*}
Together with (\ref{eqn:scanerio_4_1}) multiplied, the above equation leads to
\begin{align}\label{eqn:scanerio_4_2}
(1-t)^2\delta \xiab^T\Sigma_b^{-1}\xiab =(2t-t^2) \br{\sum_{i>m}\frac{v_i^2}{\lambda_i} -\xiab^T\Sigma_b^{-1}\xiab  + \log \frac{|\Sigma_a|}{|\Sigma_b|} - \br{2\sum_{i\leq m} y_i v_i +\sum_{i\leq m} \lambda_i y_i^2 }}.
\end{align}
It is sufficient to find some $0<t_0<1$ such that
\begin{align}\label{eqn:scanerio_4_3}
\frac{(1-t_0)^2}{t_0(2-t_0)}\delta \xiab^T\Sigma_b^{-1}\xiab \leq  \sum_{i>m}\frac{v_i^2}{\lambda_i} -\xiab^T\Sigma_b^{-1}\xiab  + \log \frac{|\Sigma_a|}{|\Sigma_b|} - \br{2\sum_{i\leq m} y_i v_i +\sum_{i\leq m} \lambda_i y_i^2 },
\end{align}
then there definitely exists some $0<t<t_0$ satisfying (\ref{eqn:scanerio_4_2}).

We are going to give a lower bound for the right hand side of (\ref{eqn:scanerio_4_3}). Particularly, we need to lower bound $\sum_{i>m}\frac{v_i^2}{\lambda_i} -\xiab^T\Sigma_b^{-1}\xiab$. Denote $\tilde y = U^T(-\Sigma_a^{-1/2}\xiab)$. Then using the definition of $v$ and $\{\lambda_i\}$, we have 
\begin{align*}
 2\sum_{i\in[k]} \tilde y_i v_i +\sum_{i\in[k]} \lambda_i \tilde y_i^2  &=  2(-\Sigma_a^{-\frac{1}{2}}\xiab)^T \Sigma_a^\frac{1}{2} \Sigma_b^{-1}\xiab +(-\Sigma_a^{-\frac{1}{2}}\xiab)^T\br{\Sigma_a^\frac{1}{2}\Sigma_b^{-1}\Sigma_a^\frac{1}{2}- I_d }(-\Sigma_a^{-\frac{1}{2}}\xiab) \\
 & = -\xiab^T \Sigma_b^{-1} \xiab -\xiab^T \Sigma_a^{-1} \xiab.
\end{align*}
Then we have 
\begin{align}
\sum_{i>m}\frac{v_i^2}{\lambda_i} -\xiab^T\Sigma_b^{-1}\xiab &=\xiab^T \Sigma_a^{-1} \xiab  + \sum_{i>m}\lambda_i\br{\tilde y_i + \frac{v_i}{\lambda_i}}^2 + \br{ 2\sum_{i\leq m} \tilde y_i v_i +\sum_{i\leq m} \lambda_i \tilde y_i^2 } \nonumber\\
& \geq  \xiab^T \Sigma_a^{-1} \xiab + \br{ 2\sum_{i\leq m} \tilde y_i v_i +\sum_{i\leq m} \lambda_i \tilde y_i^2 }.\label{eqn:scanerio_4_4}
\end{align}
Hence, the right hand side of (\ref{eqn:scanerio_4_3}) can be lower bounded by
\begin{align}
& \geq  \xiab^T \Sigma_a^{-1} \xiab + \log \frac{|\Sigma_a|}{|\Sigma_b|} + \br{ 2\sum_{i\leq m} \tilde y_i v_i +\sum_{i\leq m} \lambda_i \tilde y_i^2 }  - \br{2\sum_{i\leq m} y_i v_i +\sum_{i\leq m} \lambda_i y_i^2 } \nonumber \\
&\geq   \xiab^T \Sigma_a^{-1} \xiab + \log \frac{|\Sigma_a|}{|\Sigma_b|} - 8\br{\sqrt{\delta'} \sqrt{d}\lambdamin^{-\frac{1}{2}}\dab^2 + \delta'\lambdamin^{-1}\dab^2} \nonumber \\
&\geq  \xiab^T \Sigma_a^{-1} \xiab + \log \frac{|\Sigma_a|}{|\Sigma_b|} -16\sqrt{\delta'd} \lambdamin^{-\frac{1}{2}}\dab^2 ,\label{eqn:scanerio_4_5}
\end{align}
where we use both $\tilde y,y\in S$ and the assumption that $\abs{v_i}\leq \sqrt{\delta'}\dab$ and $\abs{\lambda_i}\leq \delta'$ for any $i\leq m$. Then a sufficient condition for (\ref{eqn:scanerio_4_3})  is $t_0$ satisfies
\begin{align*}
\frac{(1-t_0)^2}{t_0(2-t_0)}\delta \xiab^T\Sigma_b^{-1}\xiab \leq \xiab^T \Sigma_a^{-1} \xiab + \log \frac{|\Sigma_a|}{|\Sigma_b|} +16\sqrt{\delta'd} \lambdamin^{-\frac{1}{2}}\dab^2.
\end{align*}
Since $\xiab^T\Sigma_b^{-1}\xiab$ is in the same order of $\dab^2$ and $ \log \frac{|\Sigma_a|}{|\Sigma_b|} \lesssim d=O(1)$, it can be achieved by $t_0 = \sqrt{\delta}$.

As a result, from (\ref{eqn:scanerio_4_0}) we have
\begin{align*}
 \max_{  y  \in \bar{\mathcal{B}}'_{a,b}(\delta) \cap S}  \norm{  y  -g(  y  )}   &\leq \max_{y  \in \bar{\mathcal{B}}'_{a,b}(\delta) \cap S} \norm{w(y)} \abs{t_0} \leq  \abs{t_0}  \max_{y  \in \bar{\mathcal{B}}'_{a,b}(\delta) \cap S} 2\sqrt{\norm{y}^2 + \frac{\norm{v}^2}{\delta'}} \\
 &\lesssim \sqrt{\delta} \br{\dab + \delta'^{-\frac{1}{2}} \dab} \leq 2\delta^\frac{1}{4}\dab.
\end{align*}

~\\
Combining the above four scenarios, we can see we all have $ \max_{  y  \in \bar{\mathcal{B}}'_{a,b}(\delta) \cap S}  \norm{  y  -g(  y  )}  \lesssim \delta^\frac{1}{4}\dab$ which is $o(1)\snr'_{a,b}$.  By the argument before the discussion of the four scenarios, we have  $\snr'_{a,b}(\delta) \geq (1-o(1))\snr'_{a,b}$ and the proof is complete. 
\end{proof}

%

\begin{lemma}\label{lem:2}
Assume $d=O(1)$ and $\lambda_{\min}\leq \lambda_1(\Sigma_a^*),\lambda_1(\Sigma_b^*) \leq \lambda_d(\Sigma_a^*), \lambda_d(\Sigma_b^*)\leq \lambda_{\max}$ where $\lambda_{\min},\lambda_{\max}>0$ are constants. Under the condition $\snr'_{a,b}\rightarrow\infty$,
for any positive sequence $\delta=o(1)$, there exists a $\tilde\delta=o(1)$ that  depends on $d,\dab,\lambdamin,\lambdamax$ such that
\begin{align*}
P_{1,2}(0) \geq \ebr{-\frac{1+\tilde\delta}{8}\snr_{a,b}^{'2}}.
\end{align*}
\end{lemma}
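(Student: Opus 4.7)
\textbf{Proof plan for Lemma \ref{lem:2}.} The approach mirrors Lemma \ref{lem:1} but in the opposite direction: I will exhibit an explicit tubular subset of $\mathcal{B}_{a,b}(0)$ anchored at a minimum-norm point and lower-bound the Gaussian mass on it directly. Write the defining quadratic as $f(x) = v^T x + \tfrac{1}{2} x^T M x - c$, where $v = \Sigma_a^{*\frac12}(\Sigma_b^*)^{-1}\Xi_{a,b}$, $M = \Sigma_a^{*\frac12}(\Sigma_b^*)^{-1}\Sigma_a^{*\frac12} - I_d$ (so $\|M\| = O(1)$), and $-c = f(0) > 0$ since $\snr'_{a,b}\to\infty$ forces $0 \notin \mathcal{B}_{a,b}(0)$. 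By Lemma \ref{lem:sameorder}, the ratio $\gamma := |c|/\|x^*\|^2$ is trapped between two positive constants.

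Pick $x^* \in \arg\min\{\|x\|: x \in \mathcal{B}_{a,b}(0)\}$, so $\|x^*\| = \snr'_{a,b}/2$, and set $u = x^*/\|x^*\|$. From the KKT conditions for minimizing $\|x\|^2$ subject to $f(x) \leq 0$, $\nabla f(x^*) = v + Mx^* = -\alpha x^*$ for some $\alpha > 0$; strict positivity follows because a zero multiplier would force $x^* = 0 \notin \mathcal{B}_{a,b}(0)$. Evaluating the quadratic along the ray through $0$ and $x^*$ yields the identity $\alpha = \gamma - \tfrac{1}{2} u^T M u$, so in particular $u^T M u \leq 2\gamma$ and the inward normal to $\mathcal{B}_{a,b}(0)$ at $x^*$ is $+u$.

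Fix a constant $C > 0$ to be chosen later and set $\epsilon_n = C/\|x^*\|$ and
\begin{align*}
A := \bigl\{x^* + t u + w : t \in [\epsilon_n, 2\epsilon_n],\ w \in u^\perp,\ \|w\| \leq 1\bigr\}.
\end{align*}
Since $f$ is quadratic, a direct expansion at $x^*$ yields
\begin{align*}
f(x^* + tu + w) = -\alpha t \|x^*\| + \tfrac{t^2}{2} u^T M u + t\, u^T M w + \tfrac{1}{2} w^T M w,
\end{align*}
whose last three terms are uniformly $O(\|M\|) = O(1)$ on $A$, while the leading term is $\leq -\alpha C$. Choosing $C$ large enough (relative to $\alpha^{-1}$ and $\|M\|$) ensures $A \subseteq \mathcal{B}_{a,b}(0)$. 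Moreover, for $x \in A$ we have $\|x\|^2 = \|x^*\|^2 + 2 t \|x^*\| + t^2 + \|w\|^2 \leq \|x^*\|^2 + 5 C$, so
\begin{align*}
P_{a,b}(0) \geq \int_A (2\pi)^{-d/2} e^{-\|x\|^2/2}\, dx \geq (2\pi)^{-d/2} e^{-(\|x^*\|^2 + 5C)/2}\, \epsilon_n\, \omega_{d-1},
\end{align*}
where $\omega_{d-1}$ is the volume of the unit $(d-1)$-ball in $u^\perp$. Since $d = O(1)$ and $\epsilon_n \gtrsim \|x^*\|^{-1}$, the logarithm of the prefactor is $-O(\log\|x^*\|) = -o(\|x^*\|^2)$, yielding $P_{a,b}(0) \geq \exp\!\bigl(-(1+\tilde\delta)\|x^*\|^2/2\bigr) = \exp\!\bigl(-(1+\tilde\delta)\snr_{a,b}^{'2}/8\bigr)$ for some $\tilde\delta = o(1)$.

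\textbf{Main obstacle.} The delicate point is balancing the choice of $C$: it must be large enough that $-\alpha C$ dominates the $O(1)$ ``bad'' terms, i.e.\ $C \gtrsim \alpha^{-1}$, while the exponent penalty $5C/2$ must remain $o(\|x^*\|^2)$. Both hold simultaneously when $\alpha$ is bounded below by a positive constant, but in degenerate spectral configurations of $M$ one may have $\alpha = o(1)$, mirroring the four scenarios in the proof of Lemma \ref{lem:1}. In those cases I would replace the entry direction $+u$ by a more convenient one---e.g.\ a negative-eigenvalue eigenvector of $M$ if one exists, or the projection of $-v$ onto $u^\perp$---and redo the expansion of $f$. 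Fortunately the case analysis here is markedly shorter than in Lemma \ref{lem:1}, since a single admissible entry direction at $x^*$ suffices to produce the Gaussian lower bound.
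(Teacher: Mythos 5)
Your high-level strategy is the same as the paper's: anchor at a minimum-norm point of $\mathcal{B}_{a,b}$, exhibit an explicit tube inside $\mathcal{B}_{a,b}$ whose farthest point has squared norm $(1+o(1))\snr_{a,b}^{'2}/4$, and lower bound the Gaussian mass by volume times minimum density. In the non-degenerate regime your execution is clean and arguably simpler than the paper's: when the KKT multiplier satisfies $\alpha^{-1}=o(\dab^2)$, the cylinder $A$ of length $C/\norm{x^*}$ with $C\asymp \norm{M}/\alpha$ does the job, and the bookkeeping ($5C/2=o(\dab^2)$, $\log(1/\epsilon_n)=O(\log \dab)$) is correct.

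However, the degenerate case you flag as "the main obstacle" is a genuine gap, not a routine patch, and your sketched repairs would not close it. The statement is actually \emph{false} for a generic quadratic $f(x)=v^Tx+\tfrac12 x^TMx-c$ with $\gamma=|c|/\norm{x^*}^2\asymp 1$: take $M=I_d$, so $\mathcal{B}=\cbr{x:\norm{x+v}^2\le \norm{v}^2+2c}$ is a ball of radius $r=\sqrt{\norm{v}^2-2|c|}$ centered at $-v$; choosing $\norm{v}^2-2|c|=e^{-2\dab^2}$ keeps $\gamma\approx 1/2$ yet gives $\p(\eta\in\mathcal{B})\le r^d e^{-\norm{x^*}^2/2}$, which violates the claimed bound. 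Here $\alpha=r/\norm{x^*}$ is exponentially small, $M$ has no negative eigenvalue, and the projection of $-v$ onto $u^\perp$ is zero, so neither of your fallback directions exists. What rules this configuration out for the actual $(v,M,c)$ arising from two Gaussians is a structural identity your proof never uses: the point $\tilde x=-\Sigma_a^{*-1/2}\xiab$ satisfies $f(\tilde x)=-\tfrac12\xiab^T\Sigma_a^{*-1}\xiab-\tfrac12\log\frac{|\Sigma_a^*|}{|\Sigma_b^*|}\le -C'\dab^2$ while $\norm{\tilde x}=O(\dab)$, i.e.\ $\mathcal{B}_{a,b}$ always contains a deep interior point at the right scale. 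The paper's proof invokes exactly this (the computation yielding $\sum_{i>m}v_i^2/\lambda_i-\xiab^T\Sigma_b^{*-1}\xiab\ge \xiab^T\Sigma_a^{*-1}\xiab+\cdots$ in its Scenario 4), and its four-scenario case analysis with displacements of order $\delta^{1/4}\dab$ rather than $O(1/\alpha)$ is what handles small or vanishing gradients at the minimum-norm point. To complete your proof you would need to import this deep-point argument (or an equivalent quantitative "fatness" bound on $\mathcal{B}_{a,b}$ near $x^*$); a case analysis on the spectrum of $M$ alone cannot suffice.
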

\begin{proof}
For convenience and conciseness, we will use  the notation $\theta_a,\theta_b,\Sigma_a,\Sigma_b$ instead of $\theta_a^*,\theta_b^*,\Sigma_a^*,\Sigma_b^*$ throughout the proof.
From Lemma \ref{lem:sameorder}, we know $\snr'_{a,b}$ is in the same order of $\dab$, which means $\dab\rightarrow\infty$.
Similar to the proof of Lemma \ref{lem:1},  denote $\lambda_1\leq \ldots \leq \lambda_d$ to be  the eigenvalues of $\Sigma_a^{\frac{1}{2}}(\Sigma_b)^{-1}\Sigma_a^{\frac{1}{2}}-I_d$ such that its eigen-decomposition can be written as 
$\Sigma_a^{\frac{1}{2}}(\Sigma_b)^{-1}\Sigma_a^{\frac{1}{2}}-I_d = \sum_{i=1}^d \lambda_i u_iu_i^T,$
where $\{u_i\}$ are orthogonal vectors. Denote  $U=(u_1,\ldots,u_d)$, 
$
v = U^T \Sigma_a^\frac{1}{2} \Sigma_b^{-1}\xiab
$.
Then denote
\begin{align*}
 \mathcal{B}'_{a,b}  & = \cbr{y\in \mathr^d: \sum_i y_i v_i + \frac{1}{2}\sum_i \lambda_i y_i^2 \leq -\frac{1}{2}\xiab^T\Sigma_b^{-1}\xiab  + \frac{1}{2}\log \frac{|\Sigma_a|}{|\Sigma_b|}},
\end{align*}
and its boundary
\begin{align*}
\bar{\mathcal{B}}'_{a,b} & = \cbr{y\in \mathr^d: \sum_i y_i v_i + \frac{1}{2}\sum_i \lambda_i y_i^2 = -\frac{1}{2}\xiab^T\Sigma_b^{-1}\xiab  + \frac{1}{2}\log \frac{|\Sigma_a|}{|\Sigma_b|}}.
\end{align*}
By the same argument as in the proof of Lemma \ref{lem:1}, $ \mathcal{B}'_{a,b} $ can be seen a reflection-rotation of $ \mathcal{B}_{a,b} $ by the transformation $y=U^tx$. Hence we have $\snr'_{a,b} =  \min_{  y  \in \mathcal{B}'_{a,b}} 2\norm{  y  }$ and we can work on $\mathcal{B}'_{a,b}$ instead of $\mathcal{B}_{a,b}$. Denote $\bar y\in \mathcal{B}'_{a,b}$ to be the one such that $2\norm{\bar y} = \snr'_{a,b}$. From the proof of  Lemma  \ref{lem:1} we also know $\bar y \in S$ which is defined as $S = \{  y  \in\mathr^d:\|  y  \|\leq 2\lambdamin^{-1/2}\dab\}$. In addition, we know $\bar y \in \bar{\mathcal{B}}'_{a,b}$.

We first give the main idea of the remaining proof.  Denote $p(y)$ to be the density function of $y\sim \mathn(0,I_d)$
We will construct a set $T\subset \mathr^d$ around $\bar y$ such that for any $y\in T$ we have $y\in \mathcal{B}'_{a,b}$ and $\norm{y-\bar y} =o(\dab)$. Then we have
\begin{align}\label{eqn:p12}
P_{1,2}(0)& \geq \abs{T} \inf_{y\in T} p(y) =\abs{T}  \frac{1}{(2\pi)^\frac{d}{2}} \ebr{-\frac{1}{2} \max_{y\in T} \norm{y}^2} \nonumber \\
 &=\abs{T}  \frac{1}{(2\pi)^\frac{d}{2}} \ebr{-(1+o(1))\frac{\snr_{a,b}^{'2}}{8}}.
\end{align}
Hence if $\log\abs{T}=o(\snr'_{a,b})$ then the proof will be complete. So it is all about constructing such $T$. We will consider four scenarios same as in the proof of Lemma \ref{lem:1}. Let $\delta=o(1)$  be some positive sequence going to 0 very slowly and denote $\delta'=\sqrt{\delta}$.

~\\
\emph{Scenario 1: $\abs{\lambda_1},\abs{\lambda_d}\leq \delta'$.} Define $w=v/\norm{v}$. We define $T$ as follows:
\begin{align*}
T=\cbr{y=\bar y + s:\norm{(I_d - ww^T)s} \leq \delta' \abs{w^T s}, w^Ts \in[-\delta\dab,0]}.
\end{align*}
Since $\bar y \in \bar{\mathcal{B}}'_{a,b}$, we have 
\begin{align}\label{eqn:p12_0}
\sum_i \bar y_i v_i + \frac{1}{2}\sum_i \lambda_i \bar  y_i^2 = -\frac{1}{2}\xiab^T\Sigma_b^{-1}\xiab  + \frac{1}{2}\log \frac{|\Sigma_a|}{|\Sigma_b|}.
\end{align}
It is obvious $\max_{y\in T}\norm{y -\bar y} \leq 2\delta \dab$. Hence we only need to show that for any $y\in T$, $y\in \mathcal{B}'_{a,b}$, i.e., 
\begin{align}\label{eqn:p12_01}
\sum_i (\bar y_i + s_i) v_i + \frac{1}{2}\sum_i \lambda_i  (\bar  y_i+s_i)^2 \leq -\frac{1}{2}\xiab^T\Sigma_b^{-1}\xiab  + \frac{1}{2}\log \frac{|\Sigma_a|}{|\Sigma_b|}.
\end{align}
From the above two displays, we need to show
\begin{align}\label{eqn:p12_1}
2\sum_i   s_i v_i +  \sum_i \lambda_is_i^2 + 2\sum_i\lambda_i \bar y_i s_i \leq 0.
\end{align}
Note that $s$ satisfies $\norm{s}\leq 2\abs{w^T s}$. 
\begin{align*}
2\sum_i   s_i v_i +  \sum_i \lambda_is_i^2 + 2\sum_i\lambda_i \bar y_i s_i  &\leq 2\norm{v} w^T s + \delta' \norm{s}^2 +  \delta' \norm{\bar y}\norm{s} \\
&\leq  2\norm{v} w^T s +  \delta'  \abs{w^T s}^2+  \delta' \norm{\bar y} \abs{w^Ts } \\
& = \abs{w^T s} \br{-2 \norm{v} + \delta'\abs{w^T s} + \delta'\norm{\bar y}}\\
&\leq 0,
\end{align*}
where we use the fact that $\norm{v},\norm{\bar y}$ are in the order of $\dab$. Hence, for any $y\in T$, we have  shown $y\in \mathcal{B}'_{a,b}$.  From Lemma \ref{lem:volume}, we have $\abs{T}\geq \ebr{d\log \frac{\delta\delta'\dab}{4} -\frac{d}{2}\log d} $. Since $d=O(1)$, $\dab\rightarrow\infty$, and $\delta$ goes to 0 slowly, (\ref{eqn:p12}) leads to $ P_{1,2}(0) \geq  \ebr{-(1+o(1))\frac{\snr_{a,b}^{'2}}{8}}$.

~\\
\emph{Scenario 2: $\lambda_1<-\delta'$.} Denote $e_1$ the first standard basis in $\mathr^d$. Define $T$ as
\begin{align*}
T=\cbr{y=\bar y + s:\norm{(I_d - e_1e_1^T)s} \leq \delta \abs{e_1^T s}, \text{sign}(v_1 + \lambda_1\bar y_1) e_1^Ts \in[-2\delta^\frac{1}{4}\dab, -\delta^\frac{1}{4}\dab]}.
\end{align*}
Here for the sign function we define $\text{sign}(0)=1$.
It is obvious $\max_{y\in T}\norm{y -\bar y} \leq 2\delta \dab$. Hence we only need to establish (\ref{eqn:p12_1}) to show that for any $y\in T$, $y\in \mathcal{B}'_{a,b}$. Note that 
\begin{align*}
2\sum_i   s_i v_i &+  \sum_i \lambda_is_i^2 + 2\sum_i\lambda_i \bar y_i s_i   = 2s_1 (v_1 + \lambda_1\bar y_1) + \lambda_1s^2 + 2\sum_{i\geq 2} s_i(v_i + \lambda_i \bar y_i) + 2\sum_{i\geq 2}\lambda_is_i^2\\
&\leq  2s_1 (v_1 + \lambda_1\bar y_1) + \lambda_1s^2 +2 \norm{(I-e_1e_1^T)s}\br{\norm{v} + \max_{j}\abs{\lambda_j} \norm{\bar y}} + 2  \max_{j}\abs{\lambda_j}   \norm{(I-e_1e_1^T)s}^2\\
&\leq  \br{ \lambda_1 +  \delta^{2}   \max_{j}\abs{\lambda_j}} s_1^2 - 2 \abs{v_1 + \lambda_1\bar y_1} \abs{s_1} + 2\br{\norm{v} + \max_{j}\abs{\lambda_j} \norm{\bar y}}  \delta \abs{s_1} \\
&\leq -\frac{\delta'}{2} s_1^2 + O(\dab) \delta \abs{s_1},
\end{align*}
where we use $ \max_{j}\abs{\lambda_j} =O(1)$ and $\norm{v} , \norm{\bar y}$ are in the order of $\dab$. It is easy to verify the right hand side is negative when $s_1\in[-2\delta^\frac{1}{4}\dab, -\delta^\frac{1}{4}\dab]$. From Lemma \ref{lem:volume}, we have $\abs{T}\geq \ebr{d\log \frac{\delta^\frac{5}{4}\dab}{4} -\frac{d}{2}\log d} $. Then (\ref{eqn:p12}) leads to the desired result.

~\\
\emph{Scenario 3: $\lambda_1 \geq -\delta'$ and there exists a $j\in[d]$ such that $\lambda_j \leq \delta'$ and $\abs{v_j}\geq \sqrt{\delta'}\dab$.} Denote $e_j$ the $j$th standard basis in $\mathr^d$. Define $T$ as
\begin{align*}
T=\cbr{y=\bar y + s:\norm{(I_d - e_je_j^T)s} \leq \delta' \abs{e_j^T s}, \text{sign}(v_j + \lambda_j\bar y_j) e_j^Ts \in[-\delta\dab,0]}.
\end{align*}
Again define $\text{sign}(0)=1$ and it is obvious $\max_{y\in T}\norm{y -\bar y} \leq 2\delta \dab$.  Now we are going to verify (\ref{eqn:p12_1}), i.e., to show $\lambda_j s_j^2 +2s_j(v_j + \lambda_j \bar y_j)\leq -\sum_{i\neq j}\lambda_i s_i^2 -2\sum_{i\neq j}s_i(v_i + \lambda_i \bar y_i) $. On one hand, we have
\begin{align*}
\lambda_j s_j^2 +2s_j(v_j + \lambda_j \bar y_j) & = \lambda_j s_j^2 -2\abs{s_j}\abs{v_j + \lambda_j \bar y_j} \\
&\leq -\delta'\delta\dab \abs{s_j} - 2 (\sqrt{\delta'}\dab -\delta'O(\dab)) \abs{s_j} \\
&\leq -\sqrt{\delta'}\dab\abs{s_j}.
\end{align*}
One the other hand, we have
\begin{align*}
-\sum_{i\neq j}\lambda_i s_i^2 -2\sum_{i\neq j}s_i(v_i + \lambda_i \bar y_i)  &\geq  -\max_{j}\abs{\lambda_j} \norm{(I-e_je_j^T)s}^2 -2  \norm{(I-e_je_j^T)s} \br{\norm{v} + \max_{j}\abs{\lambda_j}  \norm{\bar y}} \\
&\geq   -\max_{j}\abs{\lambda_j} \delta{'2} \abs{s_j}^2 - 2\delta' \abs{s_j} \br{\norm{v} + \max_{j}\abs{\lambda_j}  \norm{\bar y}} \\
&\geq  - 2\delta'\br{\delta \dab \max_{j}\abs{\lambda_j} + \norm{v} + \max_{j}\abs{\lambda_j}  \norm{\bar y}}  \abs{s_j} \\
&\geq -2\delta' O(\dab)\abs{s_j}\\
&\geq -\sqrt{\delta'}\dab\abs{s_j},
\end{align*}
we use $ \max_{j}\abs{\lambda_j} =O(1)$ and $\norm{v} , \norm{\bar y}$ are in the order of $\dab$. Hence (\ref{eqn:p12_1}) is established. From Lemma \ref{lem:volume}, we have $\abs{T}\geq \ebr{d\log \frac{\delta\delta'\dab}{4} -\frac{d}{2}\log d} $. Then (\ref{eqn:p12}) leads to the desired result.

~\\
\emph{Scenario 4: $\lambda_1 \geq -\delta'$  and $\abs{v_j}< \sqrt{\delta'}\dab$ for all $j\in[d]$ such that $\lambda_j \leq \delta'$.}  Denote $m\in[d]$ to be the integer such that $\lambda_j \leq \delta'$ for all $j\leq m$ and $\lambda_j >\delta'$ for all $j >m$. We can have $m<k$ otherwise this scenario can be reduced to Scenario 1.

Define $w\in\mathr^d$ to be unit vector such that
\begin{align*}
w_i  = \begin{cases}
\frac{\lambda_i\bar y_i + v_i }{\sqrt{\sum_{j>m}( \lambda_j\bar y_j + v_j )^2}},\text{ for all }i>m,\\
0,\text{ o.w..}
\end{cases}
\end{align*} 
Define
\begin{align*}
T=\cbr{y=\bar y + s:\norm{(I_d -  w w^T)s} \leq \delta' \abs{ w^T s},  w^Ts \in[-\delta\dab,0]}.
\end{align*}
Now we are going to verify (\ref{eqn:p12_1}), i.e., to show $2\sum_{i> m}s_i(v_i + \lambda_i \bar y_i) + \sum_{i}\lambda_i s_i^2 +2\sum_{i\leq m}s_i(v_i + \lambda_i \bar y_i) \leq 0$. On one hand, we have
\begin{align*}
2\sum_{i> m}s_i(v_i + \lambda_i \bar y_i) &= 2\sqrt{\sum_{j>m}(v_j + \lambda_j \bar y_j)^2}\sum_{i> m} s_i w_i  =  -2\sqrt{\sum_{j>m}(v_j + \lambda_j \bar y_j)^2}  \abs{w^Ts}\\
&\leq -2\sqrt{\delta'} \sqrt{\sum_{j>m}\br{ \bar y_j + \frac{v_j}{\lambda_j}}^2}  \abs{w^Ts}.
\end{align*}
We are going to give a lower bound for $\sum_{j>m}\br{ \bar y_j + \frac{v_j}{\lambda_j}}^2$. Note that  (\ref{eqn:p12_0}) can be written as
\begin{align*}
\sum_{i>m} \lambda_i \br{ \bar y_i + \frac{v_i}{\lambda_i}}^2 &= \sum_{i>m}\frac{v_i^2}{\lambda_i} -\xiab^T\Sigma_b^{-1}\xiab  + \log \frac{|\Sigma_a|}{|\Sigma_b|} - \br{2\sum_{i\leq m} \bar y_i v_i +\sum_{i\leq m} \lambda_i \bar y_i^2 }.
\end{align*}
Denote $\tilde y = U^T(-\Sigma_a^{-1/2}\xiab)$. Using (\ref{eqn:scanerio_4_4}), we have
\begin{align*}
\sum_{i>m} \lambda_i \br{ \bar y_i + \frac{v_i}{\lambda_i}}^2 &\geq \xiab^T \Sigma_a^{-1} \xiab + \br{ 2\sum_{i\leq m} \tilde y_i v_i +\sum_{i\leq m} \lambda_i \tilde y_i^2 }  + \log \frac{|\Sigma_a|}{|\Sigma_b|} - \br{2\sum_{i\leq m} \bar y_i v_i +\sum_{i\leq m} \lambda_i \bar y_i^2 }\\
&\geq  \xiab^T \Sigma_a^{-1} \xiab + \log \frac{|\Sigma_a|}{|\Sigma_b|} -16\sqrt{\delta'd} \lambdamin^{-\frac{1}{2}}\dab^2 ,\\
&\geq C\dab^2,
\end{align*}
for some constant $C>0$.
Here the second inequality is by the same argument as (\ref{eqn:scanerio_4_5}) and the last inequality uses the fact that $\xiab^T \Sigma_a^{-1} \xiab $ is in the order of $\dab^2$ and $d=O(1)$. Hence,
\begin{align*}
\sum_{i>m} \lambda_i \br{ \bar y_i + \frac{v_i}{\lambda_i}}^2 \leq -2\sqrt{C\delta'} \dab\abs{w^Ts}.
\end{align*}
On the other hand, we have
\begin{align*}
\sum_{i}\lambda_i s_i^2  +2\sum_{i\leq m}s_i(v_i + \lambda_i \bar y_i) &\leq  \max_j \abs{\lambda_j} \norm{s}^2 +2 \sqrt{\sum_{i\leq m}s_i^2} \br{\norm{v} +  \max_j \abs{\lambda_j} \norm{\bar y}} \\
&\leq 2\max_j \abs{\lambda_j}\abs{w^Ts}^2 + 2\br{\norm{v} +  \max_j \abs{\lambda_j} \norm{\bar y}}  \norm{(I-ww^T)s}\\
&\leq 2\br{\max_j \abs{\lambda_j} \delta \dab +\delta' \br{\norm{v} +  \max_j \abs{\lambda_j} \norm{\bar y}} }\abs{w^Ts} \\
&\leq O(\delta'\dab) \abs{w^Ts},
\end{align*}
where we use the properties of $s$ as $y\in T$. Summing the above two displays together, we have  (\ref{eqn:p12_1}) satisfied. From Lemma \ref{lem:volume}, we have $\abs{T}\geq \ebr{d\log \frac{\delta\delta'\dab}{4} -\frac{d}{2}\log d} $. Then (\ref{eqn:p12}) leads to the desired result.
\end{proof}

\begin{lemma}\label{lem:volume}
Consider   any positive integer $d$ and  any $0<r<1$, $t>0$.
Define a set $T= \cbr{y\in\mathr^d: \br{\sum_{i\geq 2}y_i^2}^{1/2} \leq r \abs{y_1}, y_1\in[-2t,-t]}$. Then we have
\begin{align*}
\abs{T} \geq \ebr{d\log\frac{rt}{2}  -\frac{d}{2}\log d }.
\end{align*}
\end{lemma}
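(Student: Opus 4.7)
The plan is to compute $|T|$ directly by slicing along the $y_1$-axis and then apply a Stirling-type lower bound on the volume of a Euclidean ball. Concretely, for each fixed $y_1 \in [-2t,-t]$, the cross-section $\{(y_2,\ldots,y_d): (\sum_{i\geq 2} y_i^2)^{1/2} \leq r|y_1|\}$ is a Euclidean ball of radius $r|y_1|$ in $\mathbb{R}^{d-1}$. Writing $V_{d-1} := \pi^{(d-1)/2}/\Gamma((d+1)/2)$ for the volume of the unit ball in $\mathbb{R}^{d-1}$, Fubini's theorem yields
\begin{align*}
|T| = \int_{-2t}^{-t} V_{d-1}(r|y_1|)^{d-1}\, dy_1 = V_{d-1}\, r^{d-1} \int_t^{2t} s^{d-1}\, ds = V_{d-1}\, r^{d-1} t^d \cdot \frac{2^d - 1}{d}.
\end{align*}

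Next, I will bound the combinatorial factor and $V_{d-1}$ separately. Using $(2^d-1)/d \geq 2^{d-1}/d$, the above display gives $|T| \geq V_{d-1}\, r^{d-1} t^d\, 2^{d-1}/d$. A straightforward Stirling estimate (e.g., $\Gamma(x+1) \leq \sqrt{2\pi x}\,(x/e)^x\,e^{1/(12x)}$ applied at $x = (d-1)/2$) shows that $V_{d-1} \geq c\,(2\pi e/d)^{d/2}$ for an absolute constant $c>0$. Hence, up to absolute constants, $|T|$ is at least $r^{d-1} t^d \cdot (C/d)^{d/2} \cdot 2^{d-1}/d$ for some $C>0$. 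Comparing this with the target $(rt/2)^d\, d^{-d/2} = r^d t^d\, 2^{-d}\, d^{-d/2}$, the ratio is at least $r^{-1}\, 2^{2d-1}\, C^{d/2}/d$, which exceeds $1$ for every $d \geq 1$ (the case of small $d$ can be handled by direct inspection, where the bound is quite loose thanks to $r<1$).

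The main obstacle, such as it is, will be wrangling the constants in the Stirling step so that the final exponential exponent matches $d\log(rt/2) - (d/2)\log d$ cleanly. I expect this to go through with only routine adjustments; the slack coming from $(2^d-1)/d$ and $r<1$ is enormous compared to the polynomial-in-$d$ losses inherent in the Stirling estimate, so the bound is never tight. The one care point is to ensure the computation is written so it applies uniformly in $d$, possibly by absorbing an additional multiplicative factor into the exponent via $\log d = o(d)$.
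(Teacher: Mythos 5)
Your proposal is correct, and the arithmetic checks out: the exact slicing computation gives $\abs{T} = V_{d-1}\, r^{d-1} t^d (2^d-1)/d$ with $V_{d-1} = \pi^{(d-1)/2}/\Gamma((d+1)/2)$, and since $\Gamma((d+1)/2) \leq ((d-1)/2)^{(d-1)/2}$ for $d\geq 3$ (small $d$ by inspection), the ratio of this to the target $(rt/2)^d d^{-d/2}$ is at least $r^{-1}(2\pi)^{(d-1)/2} 2^{2d-1} d^{-1/2} \geq 1$ for every $d \geq 1$, so the "routine adjustments" you anticipate do go through. However, your route differs from the paper's. The paper does not compute $\abs{T}$ at all: it inscribes the $d$-dimensional ball $B = \{y : (y_1+1.5t)^2 + \sum_{i\geq 2} y_i^2 \leq (rt/2)^2\}$ inside $T$ (membership in $T$ is immediate from $r<1$), and then lower-bounds $\abs{B} = \pi^{d/2}(rt/2)^d/\Gamma(d/2+1)$ by $(rt/2)^d d^{-d/2}$ using only $\pi^{d/2}\geq 1$ and $\Gamma(d/2+1)\leq d^{d/2}$ — no Stirling estimate and no integration. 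The paper's argument is shorter and constant-free; yours is more informative in that it produces the exact volume and makes explicit how much slack the stated bound leaves, at the cost of the Stirling step and a separate check for small $d$.
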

\begin{proof}
Define a $d$-dimensional ball $B=\cbr{y\in\mathr^d: (y_1 + 1.5t)^2 + \sum_{i\geq 2}y_i^2 \leq (rt/2)^2}$. We can easily verify that $B\in T$. First, for all $y\in B$, we have $y_1\in[-2t,-t]$ as $r\in(0,1)$. Then, we have $\br{\sum_{i\geq 2}y_i^2}^{1/2} \leq rt/2 \leq r\abs{y_1}$.  As a result, by the expression of the  volume of a $d$-dimensional ball, we have 
\begin{align*}
\abs{T} \geq \abs{B}  = \frac{\pi^\frac{d}{2}}{\Gamma(\frac{d}{2} +1)} \br{\frac{rt}{2}}^d\geq  \frac{1}{d^\frac{d}{2}} \br{\frac{rt}{2}}^d  = \ebr{d\log\frac{rt}{2} -\frac{d}{2}\log d },
\end{align*}
where $\Gamma(\cdot)$ is the Gamma function.
\end{proof}

\bibliographystyle{plainnat}
\bibliography{reference}

\begin{thebibliography}{24}
\providecommand{\natexlab}[1]{#1}
\providecommand{\url}[1]{\texttt{#1}}
\expandafter\ifx\csname urlstyle\endcsname\relax
  \providecommand{\doi}[1]{doi: #1}\else
  \providecommand{\doi}{doi: \begingroup \urlstyle{rm}\Url}\fi

\bibitem[Abbe et~al.(2020)Abbe, Fan, and Wang]{AbbeFanWang20}
E.~Abbe, J.~Fan, and K.~Wang.
\newblock {An $\ell_p$-theory of PCA and spectral clustering}.
\newblock \emph{arxiv preprint}, 2020.

\bibitem[Bishop(2006)]{bishop2006pattern}
Christopher~M Bishop.
\newblock \emph{Pattern recognition and machine learning}.
\newblock springer, 2006.

\bibitem[Brubaker and Vempala(2008)]{brubaker2008isotropic}
S~Charles Brubaker and Santosh~S Vempala.
\newblock Isotropic pca and affine-invariant clustering.
\newblock In \emph{Building Bridges}, pages 241--281. Springer, 2008.

\bibitem[Dasgupta(2008)]{dasgupta2008hardness}
Sanjoy Dasgupta.
\newblock \emph{The hardness of k-means clustering}.
\newblock Department of Computer Science and Engineering, University of
  California~…, 2008.

\bibitem[Dempster et~al.(1977)Dempster, Laird, and Rubin]{dempster1977maximum}
Arthur~P Dempster, Nan~M Laird, and Donald~B Rubin.
\newblock Maximum likelihood from incomplete data via the em algorithm.
\newblock \emph{Journal of the Royal Statistical Society: Series B
  (Methodological)}, 39\penalty0 (1):\penalty0 1--22, 1977.

\bibitem[Fei and Chen(2018)]{fei2018hidden}
Yingjie Fei and Yudong Chen.
\newblock Hidden integrality of sdp relaxation for sub-gaussian mixture models.
\newblock \emph{arXiv preprint arXiv:1803.06510}, 2018.

\bibitem[Friedman et~al.(2001)Friedman, Hastie, and
  Tibshirani]{friedman2001elements}
Jerome Friedman, Trevor Hastie, and Robert Tibshirani.
\newblock \emph{The elements of statistical learning}, volume~1.
\newblock Springer series in statistics New York, 2001.

\bibitem[Gao and Zhang(2019)]{gao2019iterative}
Chao Gao and Anderson~Y Zhang.
\newblock Iterative algorithm for discrete structure recovery.
\newblock \emph{arXiv preprint arXiv:1911.01018}, 2019.

\bibitem[Giraud and Verzelen(2018)]{giraud2018partial}
Christophe Giraud and Nicolas Verzelen.
\newblock Partial recovery bounds for clustering with the relaxed $ k $ means.
\newblock \emph{arXiv preprint arXiv:1807.07547}, 2018.

\bibitem[Hsu et~al.(2012)Hsu, Kakade, Zhang, et~al.]{hsu2012tail}
Daniel Hsu, Sham Kakade, Tong Zhang, et~al.
\newblock A tail inequality for quadratic forms of subgaussian random vectors.
\newblock \emph{Electronic Communications in Probability}, 17, 2012.

\bibitem[Kalai et~al.(2010)Kalai, Moitra, and Valiant]{kalai2010efficiently}
Adam~Tauman Kalai, Ankur Moitra, and Gregory Valiant.
\newblock Efficiently learning mixtures of two gaussians.
\newblock In \emph{Proceedings of the forty-second ACM symposium on Theory of
  computing}, pages 553--562, 2010.

\bibitem[Laurent and Massart(2000)]{laurent2000adaptive}
Beatrice Laurent and Pascal Massart.
\newblock Adaptive estimation of a quadratic functional by model selection.
\newblock \emph{Annals of Statistics}, pages 1302--1338, 2000.

\bibitem[Lloyd(1982)]{lloyd1982least}
Stuart Lloyd.
\newblock Least squares quantization in pcm.
\newblock \emph{IEEE transactions on information theory}, 28\penalty0
  (2):\penalty0 129--137, 1982.

\bibitem[L{\"o}ffler et~al.(2019)L{\"o}ffler, Zhang, and
  Zhou]{loffler2019optimality}
Matthias L{\"o}ffler, Anderson~Y Zhang, and Harrison~H Zhou.
\newblock Optimality of spectral clustering for gaussian mixture model.
\newblock \emph{arXiv preprint arXiv:1911.00538}, 2019.

\bibitem[Lu and Zhou(2016)]{lu2016statistical}
Yu~Lu and Harrison~H Zhou.
\newblock Statistical and computational guarantees of lloyd's algorithm and its
  variants.
\newblock \emph{arXiv preprint arXiv:1612.02099}, 2016.

\bibitem[MacQueen et~al.(1967)]{macqueen1967some}
James MacQueen et~al.
\newblock Some methods for classification and analysis of multivariate
  observations.
\newblock 1967.

\bibitem[Ndaoud(2019)]{Ndaoud19}
M.~Ndaoud.
\newblock Sharp optimal recovery in the two component gaussian mixture model.
\newblock \emph{arXiv preprint}, 2019.

\bibitem[Pearson(1894)]{pearson1894contributions}
Karl Pearson.
\newblock Contributions to the mathematical theory of evolution.
\newblock \emph{Philosophical Transactions of the Royal Society of London. A},
  185:\penalty0 71--110, 1894.

\bibitem[Spielman and Teng(1996)]{spielman1996spectral}
Daniel~A Spielman and Shang-Hua Teng.
\newblock Spectral partitioning works: Planar graphs and finite element meshes.
\newblock In \emph{Proceedings of 37th Conference on Foundations of Computer
  Science}, pages 96--105. IEEE, 1996.

\bibitem[Titterington et~al.(1985)Titterington, Smith, and
  Makov]{titterington1985statistical}
D~Michael Titterington, Adrian~FM Smith, and Udi~E Makov.
\newblock \emph{Statistical analysis of finite mixture distributions}.
\newblock Wiley,, 1985.

\bibitem[Vempala and Wang(2004)]{VempalaWang04}
S.~Vempala and G.~Wang.
\newblock A spectral algorithm for learning mixture models.
\newblock \emph{J. Comput. Syst. Sci.}, 68\penalty0 (4):\penalty0 841--860,
  2004.

\bibitem[Von~Luxburg(2007)]{von2007tutorial}
Ulrike Von~Luxburg.
\newblock A tutorial on spectral clustering.
\newblock \emph{Statistics and computing}, 17\penalty0 (4):\penalty0 395--416,
  2007.

\bibitem[Wang et~al.(2020)Wang, Yan, and Diaz]{wang2020efficient}
Kaizheng Wang, Yuling Yan, and Mateo Diaz.
\newblock Efficient clustering for stretched mixtures: Landscape and
  optimality.
\newblock \emph{arXiv preprint arXiv:2003.09960}, 2020.

\bibitem[Wu et~al.(2008)Wu, Kumar, Quinlan, Ghosh, Yang, Motoda, McLachlan, Ng,
  Liu, Philip, et~al.]{wu2008top}
Xindong Wu, Vipin Kumar, J~Ross Quinlan, Joydeep Ghosh, Qiang Yang, Hiroshi
  Motoda, Geoffrey~J McLachlan, Angus Ng, Bing Liu, S~Yu Philip, et~al.
\newblock Top 10 algorithms in data mining.
\newblock \emph{Knowledge and information systems}, 14\penalty0 (1):\penalty0
  1--37, 2008.

\end{thebibliography}

\end{document}